\newcommand{\proofpart}[2]{%
  \par
  \addvspace{\medskipamount}%
  \noindent\emph{Step #1: #2}\par\nobreak
  \addvspace{\smallskipamount}%
  \@afterheading
}
\DeclarePairedDelimiter\abs{\lvert}{\rvert}%
\DeclarePairedDelimiter\norm{\lVert}{\rVert}%
\let\oldabs\abs
\def\abs{\@ifstar{\oldabs}{\oldabs*}}
\let\oldnorm\norm
\def\norm{\@ifstar{\oldnorm}{\oldnorm*}}
\g@addto@macro\bfseries{\boldmath}
\newcommand{\A}{\mathcal{A}}
\newcommand{\C}{\mathbb{C}}
\newcommand{\Dy}{\mathcal{D}}
\newcommand{\K}{\mathcal{K}}
\newcommand{\G}{\mathcal{G}}
\newcommand{\T}{\partial\mathbb{D}}
\newcommand{\Ca}{\mathcal{C}}
\newcommand{\z}{\zeta}
\newcommand{\conj}[1]{\overline{#1}}
\newcommand{\D}{\mathbb{D}}
\newcommand{\B}{\mathcal{B}}
\newcommand{\Po}{\mathcal{P}}
\newcommand{\cD}{\conj{\mathbb{D}}}
\newcommand{\dist}[2]{\text{dist}( #1, #2 ) }
\renewcommand{\Dy}{\mathcal{D}}
\renewcommand\Re{\operatorname{Re}}
\newtheorem{thm}{Theorem}[section]
\newtheorem{lemma}[thm]{Lemma}
\newtheorem{cor}[thm]{Corollary}
\newtheorem{prop}[thm]{Proposition}
\theoremstyle{definition}
\theoremstyle{definition}
\newcommand{\Addresses}{{% additional braces for segregating \footnotesize
		\bigskip
		\footnotesize
		
		Adem Limani, \\ \textsc{Departament de Matem\`{a}tiques \\ Universitat Aut\`{o}noma de Barcelona \\
		08193 Barcelona}\\
		\texttt{ademlimani@mat.uab.cat}
		
		\medskip
		Artur Nicolau, \\ \textsc{Departament de Matem\`{a}tiques \\ Universitat Aut\`{o}noma de Barcelona \\
        Centre de Recerca Matem\`atica \\
		08193	Barcelona}\\
		\texttt{artur.nicolau@uab.cat}
			
	}}
\begin{document}
\title{\textbf{Shift invariant subspaces in the Bloch space}}

\author{Adem Limani} 
\address{Departament de Matem\`atiques, Universitat Autònoma de Barcelona, Barcelona 08193}
\email{ademlimani@mat.uab.cat}

\author{Artur Nicolau}

\address{Centre de Recerca Matem\`atica,
Barcelona 08193}
\email{artur.nicolau@uab.cat}

\date{\today}

\maketitle

\begin{abstract}
\noindent
We consider weak-star closed invariant subspaces of the shift operator in the classical Bloch space. We prove that any bounded analytic function decomposes into two factors, one which is cyclic and another one generating a proper shift invariant subspace, satisfying a permanence property, which in a certain way is opposite to cyclicity. Singular inner functions play the crucial role in this decomposition. We show in several different ways that
the description of shift invariant subspaces generated by inner functions in the Bloch spaces deviates substantially from the corresponding description in the Bergman spaces, provided by the celebrated Korenblum and Roberts Theorem. %on the description of shift invariant subspaces generated by inner functions %describing cyclicity and the permanence property in the Bergman spaces deviates substantially from the setting of the Bloch space. 
Furthermore, the relationship between invertibility and cyclicity is also investigated and we provide an invertible function in the Bloch space which is not cyclic therein. Our results answer several open questions stated in the early nineties.  

% We investigate $M_z$-invariant subspaces generated by inner functions in the classical Bloch space. We prove that any singular inner function decomposes into two factors, one which generates a proper $M_z$-invariant subspace satisfying a certain permanence property and the other which is cyclic. Furthermore, we illustrate different concepts giving rise to singular factors satisfying the permanence property, connected to certain boundary zero sets and Sobolev functions in the associated model spaces. Moreover, we also provide a new sufficient condition for cyclic vectors in $\B$. Our developments enables us to answer a couple of questions on cyclic vectors in the Bloch space, left open in 1991 from a series of work by Anderson, Brown, Fernandez and Shields.
\end{abstract}

\section{Introduction} \label{INTROsec} 
\noindent
Let $\D$ denote the open unit disc in the complex plane $\C$ and let $\B$ be the classical Bloch space of analytic functions $f$ in $\D$ satisfying
\[
\norm{f}_{\B}:= \abs{f(0)}+ \sup_{z\in \D} (1-|z|)|f'(z)| < \infty.
\]
Taking the closure of analytic polynomials in the norm above we obtain the so-called little Bloch space $\B_0$, which is a separable Banach subspace of $\B$ consisting of functions $f$ satisfying
\[
\lim_{|z|\to 1^{-}} (1-|z|)f'(z) =0.
\]
Let $H^\infty$ designate the Banach space of bounded analytic functions $f$ in $\D$ equipped with the usual norm $\|f\|_\infty = \sup \{|f(z)| : z \in \D \}$, and recall that $H^\infty \subset \B$ by Schwarz lemma. Functions in the Bloch space are intimately related to conformal mappings, and as such they are regarded as a crucial objects in geometric function theory, see \cite{garnett2005harmonic} for further details. The Bloch space may also be regarded as the natural substitute for $H^\infty$ in the limiting case when $p \to \infty$ of the Bergman spaces $B_p$ of analytic functions $f$ in $\D$ satisfying
\[
\int_{\D} |f(z)|^p d A(z) < \infty,
\]
where $dA$ denotes the area measure on $\D$. See \cite{hedenmalmbergmanspaces} for a detailed treatment. For $1 \leq p < \infty$, we denote by $W^p$ the Sobolev space of analytic functions $g$ on $\D$ satisfying 
\[
\norm{g}^p_{W^p} := |g(0)|^p +  \int_{\D} \abs{g'(z)}^p dA(z)  < \infty.
\] 
Let $dm$ denote the Lebesgue measure on $\T$. It is well-known that the dual space of $W^1$ can be identified with $\B$ considered in the Cauchy pairing. More precisely, for any $f\in \B$ and $g\in W^1$, the limit
\[
\ell_f(g) := \lim_{r\to 1-} \int_{\T} f(r\zeta) \conj{g(r\zeta)} dm(\zeta) 
\]
exists and $f$ induces a unique bounded linear functional on $W^1$. Similarly, the dual of $\B_0$ can be identified with $W^1$, see for instance \cite{pommerenkebloch}. Let $M_z$ denote the multiplication operator by the independent variable $z$, that is, the linear operator $M_zf(z) = zf(z)$, $f \in \B$. Our main purpose is to investigate a certain class of $M_z$-invariant subspaces in $\B$, and to adjust for the fact that $\B$ is not separable, we shall instead consider weak-star closed subspaces in $\B$ which are $M_z$-invariant. Given a function $f\in \B$, we shall denote by $\left[ f\right]_{\B}$ the smallest weak-star closed $M_z$-invariant subspace containing $f$, that is, $\left[ f\right]_{\B}$ is the weak-star closure of polynomial multiples of $f$. If $f\in \B_0$, then $\left[f\right]_{\B}$ is actually the norm-closure of polynomial multiples of $f$ by Mazur's Theorem. A function $f  \in \B$ is called \emph{cyclic} in $\B$ if $\left[f\right]_{\B} = \B$. Since the set of analytic polynomials is weak-star dense in $\B$, we have that $f$ is cyclic in $\B$ if and only if $1\in \left[ f\right]_{\B}$. It is worth mentioning that $\left[ f\right]_{\B}$ may be much larger than the subspace of weak-star limits of sequences of polynomial multiples of $f$, as we shall demonstrate in \thref{THM:Seqcyc}. A function $f \in H^\infty$ is declared to satisfy the \emph{permanence property} (in $\B$) if the corresponding $M_z$-invariant subspace satisfies $\left[ f \right]_{\B} \cap H^\infty = f H^\infty$. The permanence property should be understood as an antithesis to cyclicity, in the sense that any bounded analytic function in $\left[f\right]_{\B}$ is divisible by $f$. %Thus such $M_z$-invariant subspaces $\left[f\right]_{\B}$ are of Beurling-type. Note that for the permanence property only the containment $\subseteq$ is important as the reverse inclusion $\supseteq$ vacuously holds. 

% As a pre-cautionary remark, we remind the reader that although the Krein-Smulian Theorem ensures that a convex set $S$ is weak-star closed in $\B$ if and only if it its weak-star sequentially closed, it does not mean that $\left[f\right]_{\B}$ is the weak-star sequential closure of the smallest $M_z$-invariant subspace containing $f$. We shall see that this observation elucidates a substantial barrier for understanding $M_z$-invariant subspaces in $\B$.  However, for $f\in \B_0$ the weak-star closed $M_z$-invariant subspace generated by $f$ agrees with the norm closure in $\B_0$, hence we may denote it by $\left[f \right]_{\B_0}$. This implies that $F \in \left[f \right]_{\B_0} $
% if and only if there exists a sequence of analytic polynomials $\{Q_n\}_n$ such that 
% \[
% \lim_n \norm{f Q_n -F}_{\B_0} = 0.
% \]
% Rephrasing the conditions for weak-star convergence of sequences in $\B$, a sufficient condition for $f \in \B$ to be weak-star cyclic is the existence of a sequence of analytic polynomials $\{Q_n\}_n$ satisfying the following properties:
% \begin{enumerate}
%     \item[(i)] $\sup_n \norm{fQ_n}_{\B} < \infty$,
%     \item[(ii)] $f(z)Q_n(z) \to 1$ for every $z\in \D$, as $n\to \infty$.
% \end{enumerate}

In this note, we shall mainly be concerned with $M_z$-invariant subspaces in $\B$ generated by bounded analytic functions. Recall that any $f \in H^\infty$ can be factored as $f=cFBS_\mu$ with unimodular constant $c$, where $F$ denotes the outer factor of $f$ defined by 
\[
F(z) = \exp \left( \int_{\T} \frac{\zeta+z}{\zeta-z} \log|f(\zeta)| d m (\zeta) \right), \qquad z \in \D , 
\]
the function $S_\mu$ is the singular inner factor of $f$ given by
\[
S_\mu(z) = \exp \left(- \int_{\T} \frac{\zeta+z}{\zeta-z} d\mu(\zeta) \right) , \qquad z \in \D ,
\]
where $\mu$ is a positive finite Borel measure on $\T$ which is singular with respect $dm$, while $B$ denotes the Blaschke product encoding the zeros of $f$ on $\D$, that is, 
\[
B(z)=\prod_{a: f(a)=0}  \frac{\abs{a}}{a} \frac{a-z}{1-\conj{a}z}, \qquad z\in \D.
\]
Functions in $H^\infty$ having radial limits of modulus one at almost every point of the unit circle are called \emph{inner functions}, and they are always of the form $\Theta = B S_\mu$. Given $f \in H^\infty$ with inner-outer factorization $f=FBS_\mu$, we denote by $\nu_f$ its associated Herglotz-Nevanlinna measure defined by
\[
d \nu_f = -\log|f(\zeta)| d m (\zeta) +  d \mu (\zeta) +\sum_{z: f(z)=0} (1-|z|^2) \delta_{z},
\]
where $\delta_z$ denotes the Dirac point mass measure at $z\in \D$. Note that $\nu_f$ is a positive measure whenever $\norm{f}_{\infty}\leq 1$. It was established in \cite{brown1991multipliers} that classical outer functions belonging to $\B$ must be cyclic, and the existence of a cyclic singular inner function in $\B_0$ was proved in \cite{anderson1991inner}. Our first result provides a structural theorem for $M_z$-invariant subspaces generated by bounded analytic functions, where singular inner functions play the decisive role. 

\begin{thm}\thlabel{THM:M_zLat} 
Let $f=FS_\mu B$ be in $H^\infty$, where $F$ is outer, $S_\mu$ is singular inner and $B$ is a Blaschke product. Then there exists a unique (up to sets of $\mu$-measure zero) decomposition $\mu = \mu_P + \mu_C$ of $\mu$, where $\mu_P, \mu_C$ are mutually singular positive measures giving rise to the following dichotomy.
\begin{enumerate}
    \item[(i)] The inner function $\Theta_0:=BS_{\mu_P}$ generates a proper $M_z$-invariant subspace in $\B$ satisfying the permanence property, that is, $ \left[ \Theta_0 \right]_{\B} \cap H^\infty = \Theta_0 H^\infty$.
    
    \item[(ii)] $FS_{\mu_C}$ is cyclic in $\B$.
\end{enumerate}
    
\end{thm}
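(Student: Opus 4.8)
The plan is to define $\mu_P$ and $\mu_C$ in terms of a notion that captures "how much" of the singular measure is compatible with the permanence property versus forced cyclicity. The natural candidate: say that a positive singular measure $\sigma \leq \mu$ is of \emph{permanence type} if $S_\sigma$ (together with the Blaschke part $B$) generates a proper invariant subspace with the permanence property, and consider the supremum of all such $\sigma$. I would first verify that the class of permanence-type measures is closed under taking maximum of two measures — this likely follows because if $\left[\Theta_1\right]_{\B}\cap H^\infty = \Theta_1 H^\infty$ and $\left[\Theta_2\right]_{\B}\cap H^\infty = \Theta_2 H^\infty$, then some greatest-common-divisor or least-common-multiple argument, combined with the fact that $\Theta_1 \vee \Theta_2$ divides products, yields the permanence property for the appropriate combination. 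Then $\mu_P := \sup\{\sigma : \sigma \leq \mu,\ \sigma \text{ of permanence type}\}$ and $\mu_C := \mu - \mu_P$; mutual singularity of $\mu_P$ and $\mu_C$ would need a separate argument, essentially that the "cyclic" directions and "permanence" directions cannot overlap on a common carrier, which should follow from maximality.

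**Proving (i): the permanence property for $\Theta_0 = B S_{\mu_P}$.**

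Having defined $\mu_P$ as a supremum, part (i) amounts to showing the supremum is actually attained, i.e.\ that $\mu_P$ itself is of permanence type. This is the kind of statement that should be handled by a monotone limiting argument: take an increasing sequence $\sigma_n \uparrow \mu_P$ of permanence-type measures, and show $\left[B S_{\mu_P}\right]_{\B}\cap H^\infty = BS_{\mu_P} H^\infty$. The inclusion $\supseteq$ is trivial since $BS_{\mu_P}$ is a multiplier bound... actually, $H^\infty \subset \B$, so $BS_{\mu_P}H^\infty \subseteq [BS_{\mu_P}]_\B$ because these are polynomial-limit approximable. For $\subseteq$, suppose $h \in H^\infty \cap [BS_{\mu_P}]_\B$; one wants $BS_{\mu_P} \mid h$ in $H^\infty$, equivalently $h / (BS_{\mu_P}) \in H^\infty$, equivalently $\nu_{BS_{\mu_P}} \leq \nu_h$ in the appropriate sense (Blaschke zeros of $B$ are zeros of $h$, and $\mu_{\mu_P} \leq \mu_h$). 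Here I expect the key technical input to be a duality/annihilator characterization: $h \in [g]_\B$ iff $h$ is annihilated by every functional in $W^1$ that annihilates $gW^1$ — no wait, iff every $\psi \in W^1$ with $\ell_\psi$ vanishing on $g\cdot(\text{polynomials})$ also has $\ell_\psi(h)=0$. Testing against such functionals, using the structure of functions in $W^1$ that are "divisible" by the inner function in a suitable pairing sense, one extracts the divisibility $h \in BS_{\mu_P}H^\infty$.

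**Proving (ii): cyclicity of $F S_{\mu_C}$.**

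For part (ii) I would use the known building blocks cited in the introduction: outer functions in $\B$ are cyclic (\cite{brown1991multipliers}) and there exist cyclic singular inner functions in $\B_0$ (\cite{anderson1991inner}). First reduce to the purely singular case: since $F$ is outer and $F \in \B$... but wait, $FS_{\mu_C}\in H^\infty \subset \B$ automatically, yet $F$ alone need not be in $\B$. So the cyclicity of $FS_{\mu_C}$ must be shown directly, probably by showing $1 \in [FS_{\mu_C}]_\B$ via approximation. The strategy: $\mu_C$ has the property that \emph{no} nontrivial piece of it is of permanence type; I claim this forces $S_{\mu_C}$ to be "cyclic modulo outer factors," and more precisely that $F S_{\mu_C}$ is cyclic. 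One would argue by contradiction — if $[FS_{\mu_C}]_\B \neq \B$, then $[FS_{\mu_C}]_\B \cap H^\infty$ is a proper invariant subspace of $H^\infty$, hence of the form $\Theta H^\infty$ (or contains such), producing an inner divisor $\Theta$ of everything in the subspace; the singular part of $\Theta$ would then be a permanence-type submeasure of $\mu_C$, contradicting maximality of $\mu_P$ unless it is zero, and a zero singular inner divisor combined with $F$ outer and cyclicity of outer functions forces $[FS_{\mu_C}]_\B = \B$.

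**Main obstacle.**

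The hardest part will be establishing that the max of two permanence-type measures is permanence-type (needed for the supremum to be well-behaved) and that the supremum is attained — both require a genuine understanding of how $[\,\cdot\,]_\B \cap H^\infty$ interacts with products and divisors of inner functions, which is exactly the non-obvious Bloch-space phenomenon the paper advertises as deviating from Korenblum–Roberts. I expect this to rest on a delicate duality argument in the $W^1$–$\B$ pairing, controlling which singular measures $\mu_P$ admit functions in $W^1$ "orthogonal" to $S_{\mu_P}$-multiples, possibly via a characterization of $[\Theta]_\B$ in terms of the boundary behavior of $\Theta$ on the carrier of $\mu$.
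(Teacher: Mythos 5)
Your skeleton is the right one --- it is Roberts' exhaustion argument, which is essentially how the paper proceeds (via an ``abstract decomposition'' valid for the Cauchy dual of any regular space, of which $W^1$ is an instance): define the family of sub-measures whose singular inner functions have the permanence property, show it is closed under finite maxima, exhaust to obtain $\mu_P$, pass to the limit by a normal-families argument on the quotients $h_n=f/S_{\sigma_n}$, and derive cyclicity of the complementary piece by contradicting maximality. Two of your steps, however, are asserted rather than proved, and one of them hides the actual crux. First, closure under maxima: the correct mechanism is the division principle $\left[S_{\sigma_1\vee\sigma_2}\right]_{\B}\subseteq\left[S_{\sigma_i}\right]_{\B}$ (because $S_{\sigma_1\vee\sigma_2}/S_{\sigma_i}\in H^\infty$), so any bounded function in $\left[S_{\sigma_1\vee\sigma_2}\right]_{\B}$ is divisible by each $S_{\sigma_i}$ and hence by their least common multiple $S_{\sigma_1\vee\sigma_2}$; your phrase ``some gcd or lcm argument'' points at this but does not execute it, and it is what makes the whole exhaustion legitimate. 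The paper also sidesteps the measure-theoretic delicacy of taking suprema of families of measures by working with restrictions $\mu\lvert_E$ to Borel sets and maximizing $\mu(E)$, which makes mutual singularity of $\mu_P=\mu\lvert_E$ and $\mu_C=\mu\lvert_{\T\setminus E}$ automatic rather than ``a separate argument''.

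The more serious gap is in (ii). You claim that if $\left[FS_{\mu_C}\right]_{\B}\neq\B$ then $\left[FS_{\mu_C}\right]_{\B}\cap H^\infty=\Theta H^\infty$ and that ``the singular part of $\Theta$ would then be a permanence-type submeasure of $\mu_C$''. The first claim needs Krein--Smulian together with the weak-star Beurling theorem in $H^\infty$ and the comparison of the two weak-star topologies, so it is not free; but even granting it, you never say \emph{why} $\Theta=S_\sigma$ has the permanence property in $\B$ --- being an inner divisor of $S_{\mu_C}$ does not by itself make $\sigma$ ``permanence-type'', and the whole point of the theorem is that most of $\mu_C$ is not. The missing observation is that $\Theta$ itself lies in $\left[FS_{\mu_C}\right]_{\B}$, whence $\left[\Theta\right]_{\B}\cap H^\infty\subseteq\left[FS_{\mu_C}\right]_{\B}\cap H^\infty=\Theta H^\infty$, which \emph{is} the permanence property and then contradicts maximality unless $\sigma=0$. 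The paper avoids the $H^\infty$ Beurling theorem altogether: it takes a nontrivial $g\in W^1$ annihilating $\left[S_{\mu_C}\right]_{\B}$, applies the F.\ and M.\ Riesz theorem to write $\conj{g}=h_2/S_\nu$ on $\T$ with $0\leq\nu\leq\mu_C$, and then uses the same $g$ a second time to show that any $u\in\left[S_\nu\right]_{\B}\cap H^\infty$ satisfies $\conj{g}=h_3/u$, forcing $u\in S_\nu H^\infty$; the outer factor $F$ is then absorbed by the separate lemma $\left[Fg\right]_{\B}=\left[g\right]_{\B}$ rather than by your contradiction. Either route can presumably be made to work, but as written your step (ii) does not close.
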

\thref{THM:M_zLat} should be viewed as a Bloch space version of Theorem 1 in \cite{roberts1985cyclic} by Roberts in the context of Bergman spaces. Our result shows that in order to understand $M_z$-invariant subspaces in $\B$ generated by functions in $H^\infty$, it suffices to study the permanence property and cyclicity of singular inner functions. It turns out that these two notions have natural interpretations in the context of model spaces. For $0<p<\infty$ let $H^p $ denote the classical Hardy space of analytic functions $f$ in $\D$ such that
\[
\|f\|_p^p = \sup_{0<r <1} \int_{\T} |f(r \zeta)|^p dm(\zeta) < \infty. 
\]
We recall that given an inner function $\Theta$, the associated model space $K_\Theta$ is defined as $K_\Theta = H^2 \ominus \Theta H^2,$ that is, the orthogonal complement of $\Theta H^2$ in the classical Hardy space $H^2$. It follows from the celebrated Beurling Theorem that the model spaces are the only invariant subspaces for the adjoint $(M_z)^*$ viewed as an operator on $H^2$. A deep result of Aleksandrov says that functions with continuous extensions to $\T$ in any model space are always dense therein, see \cite{aleksandrovinv} and also \cite{dbrcont}, \cite{limani2022abstract} for recent generalizations. Recall that a closed set $E \subset \T$ of Lebesgue measure zero is said to be a Beurling-Carleson set if 
\[
\int_{\T} \log \text{dist} \left( \zeta, E\right) dm(\zeta) > -\infty.
\] 
It was recently established that $\bigcap_{p>1}W^p \cap K_{\Theta}$ is dense in $K_{\Theta}$ if and only if $\Theta = B S_\mu$ and $\mu$ is concentrated on a countable union of Beurling-Carleson sets \cite{limani2023model}, while $\cup_{p>1} W^p \cap K_{S_\mu} = \{0 \}$ if and only if $\mu$ does not charge any Beurling-Carleson set \cite{starinvsmooth}. We shall now consider what happens when $p=1$ in our next result. Recall that for an inner function $\Theta$, one defines the associated model space $K^1_\Theta := H^1 \cap \Theta \conj{zH^1}$, interpreted in the sense of boundary values on $\T$. In other words, a function $f \in H^1$ belongs to $K^1_\Theta$ if there exists a function $g \in H^1$ with $g(0)=0$ such that $\Theta (\zeta) \overline{f(\zeta)} = g(\zeta)$ for $m$-a.e. $\zeta \in \T$. 

\begin{thm}\thlabel{THM:Modelspaces}
Let $\mu$ be a positive, finite, Borel, singular measure in $\T$ and let $S_\mu$ be the corresponding singular inner function. Then
\begin{enumerate}
    \item[(i)] $S_\mu$ is cyclic in $\B$ if and only if $K^1_{S_\mu} \cap W^1 = \{0 \}$.
    \item[(ii)] The permanence property  $\left[ S_\mu \right]_{\B} \cap H^\infty = S_\mu H^\infty$ holds if and only if $K^1_{S_\mu} \cap W^1$ is dense in $K^1_{S_\mu}$. 
    \end{enumerate}
    
\end{thm}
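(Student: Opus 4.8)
The plan is to exploit the duality between $\B$ and $W^1$ realized through the Cauchy pairing, translating the statements about weak-star closed $M_z$-invariant subspaces of $\B$ into statements about $M_z^*$-invariant subspaces of $W^1$ (equivalently, weak-star closed ideals annihilated by pairing). Concretely, I would first record the pre-annihilator correspondence: for $f \in H^\infty$, the space $\left[f\right]_{\B}^{\perp} \subset W^1$ consists of those $g \in W^1$ with $\ell_{z^n f}(g) = 0$ for all $n \geq 0$, and by the definition of $\ell$ as a boundary Cauchy pairing, this says precisely that the boundary function $\overline{f(\z)} g(\z)$ has vanishing nonnegative Fourier coefficients, i.e. $\overline{f} g \in \overline{z H^1}$, which for $f = S_\mu$ means $g \in K^1_{S_\mu}$ (intersected with $W^1$). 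Thus $\left[S_\mu\right]_{\B}^{\perp} = K^1_{S_\mu} \cap W^1$, and by the bipolar theorem $\left[S_\mu\right]_{\B} = {}^{\perp}\!\left(K^1_{S_\mu} \cap W^1\right)$.

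For part (i): $S_\mu$ is cyclic in $\B$ iff $\left[S_\mu\right]_{\B} = \B$ iff its pre-annihilator in $W^1$ is $\{0\}$, which by the displayed identification is exactly $K^1_{S_\mu} \cap W^1 = \{0\}$. This direction is essentially formal once the pre-annihilator computation is in place; the one point needing care is the rigorous justification that $\ell_{z^n f}(g)$ really equals the Cauchy pairing of boundary values and that membership in $\overline{zH^1}$ is the correct reformulation — here I would use that $g \in W^1 \subset H^1$ has honest boundary values and that $f \in H^\infty$, so $\overline{f}g \in L^1(\T)$ and the radial-limit interchange in the definition of $\ell_f$ is legitimate (this is where the standard fact that $W^1 \hookrightarrow H^1$ and Fatou's theorem enter).

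For part (ii): the permanence property $\left[S_\mu\right]_{\B} \cap H^\infty = S_\mu H^\infty$ should correspond, via annihilators, to $K^1_{S_\mu} \cap W^1$ being weak-star dense in $K^1_{S_\mu}$ inside $W^1$ — but since $W^1$ is separable I expect weak-star density and norm density to coincide on this subspace (or I would phrase density in the $W^1$-norm directly, as in the statement). The inclusion $S_\mu H^\infty \subset \left[S_\mu\right]_{\B} \cap H^\infty$ is trivial. For the reverse, suppose $K^1_{S_\mu} \cap W^1$ is dense in $K^1_{S_\mu}$; take $h \in \left[S_\mu\right]_{\B} \cap H^\infty$, so $\ell_h(g) = 0$ for all $g \in K^1_{S_\mu} \cap W^1$, hence by density $\int_{\T} h \overline{g}\, dm = 0$ for all $g \in K^1_{S_\mu}$ (using that $h \in H^\infty$ pairs continuously against $K^1_{S_\mu} \subset H^1$); this says $h \perp K^1_{S_\mu}$ in the $H^1$–$H^\infty$ pairing, and the standard duality $K^1_{S_\mu} = H^1 \cap S_\mu \overline{zH^1}$ has annihilator $S_\mu H^\infty$ in $H^\infty$ — so $h \in S_\mu H^\infty$. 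Conversely, if the permanence property holds but $K^1_{S_\mu}\cap W^1$ were not dense in $K^1_{S_\mu}$, a Hahn–Banach functional, realized (via $K^1_{S_\mu} \subset H^1$ and $(H^1)^* = H^\infty/\ldots$, or more directly via $(H^1)^* \cong \B$ in the right pairing) as some $h \in H^\infty$ not in $S_\mu H^\infty$ yet annihilating $K^1_{S_\mu}\cap W^1$, would lie in $\left[S_\mu\right]_{\B}\cap H^\infty$, contradicting permanence.

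The main obstacle I anticipate is bookkeeping around the several distinct dualities in play — $\B = (W^1)^*$, $\B_0^* = W^1$, and the Hardy-space duality used to identify the annihilator of $K^1_{S_\mu}$ inside $H^\infty$ as $S_\mu H^\infty$ — and ensuring the pairings are consistently the Cauchy pairing throughout, so that "annihilator of the pre-annihilator" genuinely returns $\left[S_\mu\right]_{\B}$ rather than some a priori larger object. A secondary technical point is confirming $W^1 \subset H^1$ with continuous inclusion and that $K^1_{S_\mu}\cap W^1$ is exactly the pre-annihilator (no smaller), which amounts to checking that no $g \in W^1$ outside $K^1_{S_\mu}$ can annihilate all $z^n S_\mu$; this follows from the Fourier-coefficient characterization above, but I would state it carefully as a lemma.
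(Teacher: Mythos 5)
Your treatment of part (i) and of the ``density implies permanence'' half of part (ii) is correct and follows the same route as the paper: the pre-annihilator of $\left[S_\mu\right]_{\B}$ in $W^1$ under the Cauchy pairing is exactly $K^1_{S_\mu}\cap W^1$ (the paper's identity \eqref{HBw*} with $X=W^1$), cyclicity is triviality of that pre-annihilator, and density plus the reproducing-kernel computation forces any $h\in\left[S_\mu\right]_{\B}\cap H^\infty$ into $S_\mu H^2\cap H^\infty=S_\mu H^\infty$.

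The converse half of part (ii) has a genuine gap. You propose to separate $K^1_{S_\mu}\cap W^1$ from $K^1_{S_\mu}$ by a Hahn--Banach functional ``realized as some $h\in H^\infty$''; but the dual of $H^1$ in the Cauchy pairing is $BMOA$, not $H^\infty$ (and certainly not $\B$ -- that is the dual of $W^1$), and more precisely the Cauchy dual of $K^1_{S_\mu}$ sits inside $K^2_{S_\mu}\subset H^2$. So the separating function $h$ you produce is only an $H^2$ (or $BMOA$) element of $\left[S_\mu\right]_{\B}$, and the permanence property as stated -- which only constrains $\left[S_\mu\right]_{\B}\cap H^\infty$ -- says nothing about it; no contradiction follows. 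The missing step, which is the real content of the paper's Proposition on this point, is a self-improvement: from $\left[S_\mu\right]_{\B}\cap H^\infty\subseteq S_\mu H^\infty$ one first deduces $\left[S_\mu\right]_{\B}\cap H^2\subseteq S_\mu H^2$. The paper does this by factoring $f=F\Phi\in\left[S_\mu\right]_{\B}\cap H^2$ into outer times inner, truncating the outer part via $|F_n|=\min(n,|F|)$, and invoking the division principle together with the fact that multiplication by a bounded outer factor does not change the generated invariant subspace, so that the inner factor $\Phi$ itself lands in $\left[S_\mu\right]_{\B}\cap H^\infty\subseteq S_\mu H^\infty$. Only then can one conclude that an annihilating element of $K^2_{S_\mu}$ lies in $K^2_{S_\mu}\cap S_\mu H^2=\{0\}$, giving density. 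Without this $H^2$ upgrade (or some substitute for it), your argument does not close.
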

Viewing $\B$ as a limiting case of the Bergman spaces, we shall now make a few comparisons. In the context of Bergman spaces, the celebrated Theorem of Korenblum and Roberts asserts that any singular measure $\mu$ uniquely decomposes as $\mu = \mu_{\Ca} + \mu_{\K}$, where $\mu_{\Ca}$ is concentrated on a countable union of Beurling-Carleson sets and gives rise to a proper $M_z$-invariant subspace generated by $S_{\mu_{\Ca}}$ satisfying the permanence property on the Bergman space, while $\mu_{\K}$ charges no Beurling-Carleson set and induces a cyclic vector $S_{\mu_{\K}}$ therein, see \cite{korenblum1981cyclic}, \cite{roberts1985cyclic}. We shall see that the situation in the Bloch space is very different and the results of Korenblum and Roberts do not carry over to this setting. Although we have not been able to find complete geometric descriptions of singular measures $\mu$ for which the corresponding singular inner functions $S_\mu$ are cyclic or satisfy the permanence property in $\B$, our progress seems to indicate that such an accomplishment is bound to be very difficult. Nevertheless, we are still able to provide several related conditions which allows us to answer various open questions and problems posed in the works of \cite{brown1991multipliers}, \cite{anderson1991inner}, \cite{pommerenkebloch} and \cite{starinvsmooth}. 

We declare a continuous non-decreasing and sub-additive function $w$ on $[0,1]$ with $w(0)=0$ to be a \emph{majorant}. A closed set $E \subset \T$ of Lebesgue measure zero is said to have finite $w$-entropy if
\[
\int_{\T} \log w \left( \text{dist}\left( \zeta, E \right) \right) dm(\zeta) >- \infty.
\]
Of course, when $w(t) = t^\alpha$ for some $0<\alpha <1$, one retains the classical Beurling-Carleson sets. Various descriptions of sets having finite $w$-entropy have recently been treated in \cite{ivrii2022beurling} and in \cite{limani2023mzinvariant}. Sets of finite $w$-entropy are precisely the boundary zero sets of analytic functions in $\D$ which extend continuously to $\T$ and whose modulus of continuity do not exceed $w$ on $\cD$, see \cite{carlesonuniqueness} for classical Beurling-Carleson sets and \cite{shirokov1982zero} for general majorants. As mentioned above, a singular inner function is cyclic in the Bergman space if and only if its associated singular measure does not charge any Beurling-Carleson set. It turns out that if a singular inner function is cyclic in the Bloch space, then its associated singular measure cannot charge a far wider range of sets.

%which cannot be charged by singular measures $\mu$ for cyclic functions $S_\mu$ in $\B$.

\begin{thm}\thlabel{THM:wsets} Let $w$ be a majorant. Assume that there exists $0<\gamma<1$ such that $w(t)/t^{\gamma}$ is non-increasing and that $w$ satisfies the Dini condition 
\[
\int_0^1 \frac{w(t)}{t}dt < \infty.
\]
If $\mu$ is concentrated on a countable union of sets having finite $w$-entropy, then the associated singular inner function $S_\mu$ satisfies the permanence property: $\left[ S_\mu \right]_{\B} \cap H^\infty = S_\mu H^\infty$. 

\end{thm}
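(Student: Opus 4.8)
The plan is to reduce \thref{THM:wsets} to a density statement via \thref{THM:Modelspaces}(ii): it suffices to prove that $K^1_{S_\mu} \cap W^1$ is dense in $K^1_{S_\mu}$. First I would reduce to the case in which $\mu$ is carried on a single set of finite $w$-entropy. If $\mu$ is carried on $\bigcup_n E_n$ with each $E_n$ of finite $w$-entropy, then each finite union $F_N := E_1 \cup \dots \cup E_N$ is again of finite $w$-entropy: since $w$ is non-decreasing, $\log w(\operatorname{dist}(\zeta,F_N)) = \min_{n \le N}\log w(\operatorname{dist}(\zeta,E_n))$, and the elementary bound $\min(a_1,\dots,a_N) \ge \sum_n a_n$ for $a_n \le 0$ shows that $\int_\T \log w(\operatorname{dist}(\zeta,F_N))\,dm$ is bounded below by $\sum_{n \le N}\int_\T \log w(\operatorname{dist}(\zeta,E_n))\,dm$ up to an additive constant. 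Moreover the truncations $\mu_N := \mu|_{F_N}$ satisfy $S_{\mu_N}\mid S_\mu$ and $S_{\mu_N} \to S_\mu$ locally uniformly, so a routine exhaustion gives $\overline{\bigcup_N K^1_{S_{\mu_N}}} = K^1_{S_\mu}$. Hence it is enough to establish density of $K^1_{S_{\mu_N}}\cap W^1$ in $K^1_{S_{\mu_N}}$ for each $N$, and we may assume that $\mu$ is carried on a single set $E$ of finite $w$-entropy.

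The Dini condition enters through the inclusion $\Lambda^A_w \subseteq W^1$, where $\Lambda^A_w$ denotes the class of analytic functions on $\D$ with boundary modulus of continuity $\lesssim w$: by the Hardy--Littlewood estimate $|f'(z)| \lesssim w(1-|z|)/(1-|z|)$ for $f \in \Lambda^A_w$, one has $\int_\D |f'|\,dA \lesssim \int_0^1 w(t)/t\,dt < \infty$. So it suffices to show that $K^1_{S_\mu}\cap\Lambda^A_w$ is dense in $K^1_{S_\mu}$ — a Lipschitz-class refinement of Aleksandrov's density theorem in the setting of $H^1$-model spaces. This is where both hypotheses on $w$ are used. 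Using the regularity of $w$ (note that $w(t)/t$ is non-increasing, since $w(t)/t^\gamma$ is and $\gamma<1$) together with Shirokov-type constructions, one produces an outer function $v = v_E$ with $|v| \le 1$, vanishing precisely on $E$, whose boundary modulus is comparable to a suitable power/multiple of $w(\operatorname{dist}(\cdot,E))$: the finite $w$-entropy of $E$ is exactly the condition guaranteeing that such a $v$ exists (so that $\int_\T \log|v(\zeta)|\,dm(\zeta) > -\infty$), while the regularity of $w$ makes $v$ vanish regularly enough on $E$ to absorb the boundary oscillation of $S_\mu$; the upshot is that multiplying the reproducing kernels $k_\lambda(z) = (1-\overline{S_\mu(\lambda)}S_\mu(z))/(1-\bar\lambda z)$ of $K^2_{S_\mu}$ (and their derivatives) by $v$ yields functions in $\Lambda^A_w$.

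To convert this into a density statement I would use the truncated Toeplitz (compression) operators. Fix a sequence $v_n$ of outer functions as above with $v_n \to 1$ locally uniformly and $|v_n|\nearrow 1$, and for $g \in K^1_{S_\mu}$ set $g_n := P_{S_\mu}(v_n g)$; these lie in $K^1_{S_\mu}$ and $g_n \to g$ in $H^1$. For $g = k_\lambda$ the computation collapses to $g_n = v_n/(1-\bar\lambda z) - S_\mu\, P_+\!\big(v_n\overline{S_\mu}/(1-\bar\lambda z)\big)$: the first term is in $\Lambda^A_w$, and the correction term is $S_\mu$ times a Cauchy/Hankel-type transform of the $\Lambda^A_w$-function $v_n/(1-\bar\lambda z)$, which one shows also lands in $\Lambda^A_w$. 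Since finite linear combinations of the $k_\lambda$ (which, being bounded, also make the convergence $g_n \to g$ routine) are dense in $K^1_{S_\mu}$, this provides a dense subset of $K^1_{S_\mu}$ contained in $\Lambda^A_w \subseteq W^1$, and \thref{THM:Modelspaces}(ii) concludes the argument.

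The main obstacle is exactly this quantitative, $\Lambda_w$-refined Aleksandrov density: the construction of the taming outer function $v_E$ and the verification that the associated compressions — equivalently, the Cauchy/Hankel operators with $\Lambda^A_w$-symbols — map into $\Lambda^A_w$. This is where the interplay of the two hypotheses on $w$ is essential: finite $w$-entropy for the mere existence of $v_E$, and the regularity $w(t)/t^\gamma \downarrow$ together with the Dini condition for the smoothness bounds. It is worth emphasizing, in contrast with the Korenblum--Roberts picture, that a set of finite $w$-entropy need not be a Beurling--Carleson set, so \thref{THM:wsets} covers a strictly larger class of measures than the Bergman-space condition. One also has to be slightly careful with the $p=1$ Riesz and Hankel projections, which are unbounded on $L^1$; carrying out the estimates on the (bounded) reproducing kernels before passing to the closure sidesteps this.
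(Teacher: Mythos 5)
Your overall architecture is the right one and matches the paper's: reduce via \thref{THM:Modelspaces}(ii) (i.e.\ \thref{Pprinc} with $X=W^1$) to density of $K^1_{S_\mu}\cap W^1$, handle countable unions by a monotone exhaustion (the paper does this on the dual side via the monotonicity principle \thref{Ppropmonprinc}, which avoids having to justify $\overline{\bigcup_N K^1_{S_{\mu_N}}}=K^1_{S_\mu}$), use Shirokov's theorem to produce an outer function $v$ with $vS_\mu$ in the $w$-Lipschitz class, use Dyakonov's division theorem to control $v\overline{S_\mu}$, and use the Dini condition to push $BMO_w(\T)$ into $W^1$ under the Cauchy projection. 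However, there is a genuine gap at the decisive step. Your approximating family is the compression $g_n=P_{S_\mu}(v_nk_\lambda)=v_nk_\lambda-S_\mu P_+\bigl(v_n\overline{S_\mu}k_\lambda\bigr)$, and you assert that the correction term ``also lands in $\Lambda^A_w$'' (or at least in $W^1$). This is exactly what is not justified: while $P_+\bigl(v_n\overline{S_\mu}k_\lambda\bigr)\in W^1$ does follow from Dyakonov plus the Dini condition, multiplication by the singular inner factor $S_\mu$ does \emph{not} preserve $W^1$ or the $w$-Lipschitz class --- if it did, $S_\mu=S_\mu\cdot 1$ would always lie in $W^1$, and the entire difficulty of Theorems \ref{THM:NoMOC} and \ref{THM:Nosupp} would evaporate. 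Concretely, $\int_\D|S_\mu'||h|\,dA<\infty$ requires $h$ to vanish on $\operatorname{supp}\mu$ fast enough to absorb $|S_\mu'|$, and $h=P_+\bigl(v_n\overline{S_\mu}k_\lambda\bigr)$ has no such property built in; via Dyakonov the needed statement amounts to $v_nS_\mu^2\in BMOA_w$, i.e.\ to $v_n$ taming $S_{2\mu}$ rather than $S_\mu$, which your construction does not provide.

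The paper's proof of \thref{THM:PPBMOAw} sidesteps this by choosing a different dense family: the Toeplitz images $T_{\overline{G}}\bigl(\kappa_{S_\mu}(\cdot,\lambda)\bigr)=T_{\overline{G}}(k_\lambda)-\overline{S_\mu(\lambda)}\,P_+\bigl(\overline{G}S_\mu k_\lambda\bigr)$, where $G$ is the Shirokov outer function. Both summands are Cauchy projections of $BMO_w(\T)$ functions (the first is just $\overline{G(\lambda)}k_\lambda$), so no stray factor of $S_\mu$ ever multiplies a $W^1$ function; density in $K_{S_\mu}$ follows because $G$ is outer (if $f$ annihilates the family then $Gf\in S_\mu H^2$, hence $f\in S_\mu H^2\cap K_{S_\mu}=\{0\}$), and density in $K^1_{S_\mu}$ follows since $K_{S_\mu}$ is dense there. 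If you replace your compressions $P_{S_\mu}(v_ng)$ by these Toeplitz images, the rest of your argument goes through essentially as you wrote it.
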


We shall in fact deduce \thref{THM:wsets} from a slightly stronger statement in Section 3 (see \thref{THM:PPBMOAw} therein), which essentially asserts that inner factors of certain weighted BMOA spaces gives rise to the permanence property. In order for $S_\mu$ to be cyclic in $\B$, \thref{THM:wsets} says that $\mu$ cannot charge any set of finite $w$-entropy. Note that the class of eligible majorants $w$ for which our Theorem holds, goes far beyond classical Beurling-Carleson sets, as it also applies to slowly increasing majorants of type $w(t) = \log^{-\alpha}(e/t)$, with $\alpha>1$. The remark following Theorem 4 in \cite{anderson1991inner} also suggests that the above Dini condition on $w$ is essentially sharp.

Since any function cyclic in $\B$ must necessarily be cyclic in the Bergman spaces, we have that $S_\mu$ is cyclic in $\B$ implies that $\mu$ cannot charge Beurling-Carleson sets. The question whether the converse is true was raised in 1991 by Brown and Shields \cite{brown1991multipliers}. \thref{THM:wsets} above shows that this is not the case, but we shall below illustrate that this actually fails in a very strong sense. Before stating  our next result, note that if $S_\mu \in W^1$, then so are the reproducing kernels of $K_{S_\mu}$, and since they span a dense subspace of $K^1_{S_\mu}$, part (ii) of \thref{THM:Modelspaces} implies that the permanence property holds whenever $S_\mu \in W^1$. 
%As explained above the Korenblum-Roberts Theorem provides a description of cyclic singular inner functions in terms of the geometry of the carrier sets of $\mu$. Our next results illustrate that a similar description of the pieces $\mu_P, \mu_C$ appearing in \thref{THM:M_zLat}, in terms of carrier sets, or modulus of continuity is not feasible.

\begin{thm}\thlabel{THM:NoMOC} 
 Let $w$ be a majorant with $w(t)/t \to \infty$ as $t\to 0^+$.  Then there exists a singular probability measure $\mu = \mu (w)$ on $\T$ satisfying the following properties: 
\begin{enumerate}
    \item[(i)] For any arc $I \subset \T$, we have $\mu(I) \leq  w (|I|).$
    
    \item[(ii)] $S_\mu \in W^1$, hence it satisfies the permanence property, that is, $\left[ S_\mu \right]_{\B} \cap H^\infty = S_\mu H^\infty.$
    
\end{enumerate}
    
\end{thm}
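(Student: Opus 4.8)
The plan is to realize $\mu$ as the mass distribution on a generalised Cantor set whose branching is chosen so rapid that $S_\mu$ lands in $W^1$; once (ii) is in hand, the permanence property follows from the remark preceding the statement, since the reproducing kernels of $K_{S_\mu}$ then lie in $W^1$, span a dense subspace of $K^1_{S_\mu}$, and one invokes part (ii) of \thref{THM:Modelspaces}. For (ii) the decisive observation is that, writing $S_\mu=e^{-\phi}$ with $\phi$ holomorphic and $u:=\Re\phi=-\log\abs{S_\mu}$ the (unnormalised) Poisson integral of $\mu$, the Cauchy--Riemann equations give $\abs{S_\mu'}=\abs{\phi'}e^{-u}=\abs{\nabla u}\,e^{-u}=\abs{\nabla e^{-u}}$ pointwise in $\D$, so by the coarea formula (applied on $\{\abs z<\rho\}$ and then letting $\rho\to1$)
\[
\norm{S_\mu}_{W^1}=\abs{S_\mu(0)}+\int_{\D}\abs{S_\mu'}\,dA=\abs{S_\mu(0)}+\int_0^\infty \mathcal H^1\bigl(\{z\in\D:u(z)=\lambda\}\bigr)\,e^{-\lambda}\,d\lambda .
\]
Thus (ii) reduces to an upper bound for the lengths of the level sets of $u$.

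\textit{The construction.} Put $\psi(t):=\inf_{0<s\le t}w(s)/s$; this is non-increasing, strictly positive for $t>0$, and $\psi(t)\to\infty$ as $t\to0^+$ since $w(t)/t\to\infty$. Fix a large absolute constant $C$. We build generations $\mathcal E_0=\{\T\},\mathcal E_1,\mathcal E_2,\dots$: $\mathcal E_n$ consists of $N_n$ pairwise disjoint closed arcs of common length $\ell_n$, each arc of $\mathcal E_{n-1}$ containing $k_n:=N_n/N_{n-1}$ equally spaced arcs of $\mathcal E_n$, and each arc of $\mathcal E_n$ will carry $\mu$-mass $1/N_n$. Given $\ell_{n-1}$, set $\lambda_n:=\psi(\ell_{n-1})/C$ and impose $N_n\ell_n=1/\lambda_n$; then pick the integer $N_n$ (a multiple of $N_{n-1}$) so large that the resulting $\ell_n=C/(N_n\psi(\ell_{n-1}))$ satisfies $\psi(\ell_n)\ge2\psi(\ell_{n-1})$. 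This produces the \emph{doubling property} $\lambda_{n+1}\ge2\lambda_n$, hence $\lambda_n\uparrow\infty$, $\ell_n\downarrow0$, $N_n\ell_n=1/\lambda_n$, and $\sum_n N_n\ell_n<\infty$. Moreover the gap between consecutive arcs of $\mathcal E_n$ inside a common parent equals $(\ell_{n-1}-k_n\ell_n)/(k_n-1)\ge\ell_{n-1}/(2k_n)\ge\ell_n$, so the arcs of $\mathcal E_n$ are $\ell_n$-separated. Let $\mu$ be the weak-$*$ limit of the uniform probability measures on $\bigcup\mathcal E_n$; since $\abs{\bigcup\mathcal E_n}=N_n\ell_n\to0$, $\mu$ is a singular probability measure, carried on a Cantor set, with mass $1/N_n$ on each arc of $\mathcal E_n$.

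\textit{Verification of (i).} For an arc $I$ with $\ell_n\le\abs I<\ell_{n-1}$, the $\ell_n$-separation together with the fact that $\abs I$ is smaller than the spacing of $\mathcal E_{n-1}$ forces $I$ to meet $\lesssim\abs I/\ell_n$ arcs of $\mathcal E_n$, all in one parent, so $\mu(I)\lesssim\abs I/(N_n\ell_n)=\abs I\,\lambda_n$. Since $\abs I<\ell_{n-1}$ we have $w(\abs I)/\abs I\ge\inf_{s\le\abs I}w(s)/s\ge\psi(\ell_{n-1})=C\lambda_n$, whence $\mu(I)\lesssim\abs I\,\lambda_n\le w(\abs I)/C\le w(\abs I)$ once $C$ dominates the implied constant; running this through the generations handles every arc, so (i) holds.

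\textit{Verification of (ii) and the main difficulty.} It suffices to prove $\mathcal H^1(\{u=\lambda\})\lesssim1/\lambda$ for $\lambda\ge\lambda_1$ and $\mathcal H^1(\{u=\lambda\})\lesssim1$ for $0<\lambda<\lambda_1$, for then the integral in the identity above converges. The expected picture, which I would establish through quantitative Poisson-kernel estimates comparing $u$ with the dyadic averages of $\mu$, is: for $\lambda_n\le\lambda<\lambda_{n+1}$ the superlevel set $\{u>\lambda\}$ splits into $\asymp N$ disjoint ``tents'' over the arcs of a single generation (generation $n$ when $\lambda\sim\lambda_n$, generation $n{+}1$ in the interior of the interval), each of height and width comparable to $1/(N\lambda)$, because at that height the Poisson average of $\mu$ across such an arc is $\asymp\lambda$ while over the separating gaps $u$ never attains $\lambda$, so adjacent tents do not merge; their boundaries therefore contribute $\asymp N\cdot 1/(N\lambda)=1/\lambda$. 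The tents are not solid: over the gaps of each finer generation $m$ a small region where $u\le\lambda$ is scooped out, but a size estimate for these ``holes'' shows the family at level $m$ adds $\lesssim N_m\ell_m=1/\lambda_m$, so in total $\mathcal H^1(\{u=\lambda\})\lesssim 1/\lambda+\sum_{m>n}1/\lambda_m\lesssim1/\lambda$, the last step using the doubling of $(\lambda_n)$. When $\lambda<\lambda_1$ the relevant level curve follows $\T$ with excursions into the gaps of total length $\lesssim\sum_n N_n\ell_n<\infty$. The hard part is precisely this geometric analysis of the level sets of $u$ near the Cantor set — establishing the tent description, excluding spurious components of $\{u>\lambda\}$, and bounding the length contributed by the holes at all scales; the coarea identity, the construction, and the arc estimate (i) are routine by comparison.
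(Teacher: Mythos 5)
Your construction of $\mu$ and your verification of (i) are essentially the paper's: a Cantor-type measure whose generation-$n$ arcs carry density $\lambda_n$ with $\lambda_{n+1}\ge 2\lambda_n$, the scales being chosen so slowly (relative to $w$) that $\mu(I)\le w(|I|)$; and the reduction of the permanence property to $S_\mu\in W^1$ via the reproducing kernels and \thref{THM:Modelspaces}(ii) is exactly the remark preceding the statement. The divergence is in (ii), where you replace the direct estimation of $\int_{\D}|S_\mu'|\,dA$ by the coarea identity $\int_{\D}|S_\mu'|\,dA=\int_0^\infty \mathcal H^1(\{u=\lambda\})e^{-\lambda}\,d\lambda$ with $u=P(\mu)$. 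That identity is correct, and the claimed bounds $\mathcal H^1(\{u=\lambda\})\lesssim 1/\lambda$ for $\lambda\ge\lambda_1$ and $\lesssim 1$ for small $\lambda$ would indeed suffice.

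The gap is that these level-set bounds are never proved: you give only the ``expected picture'' of $\asymp N$ disjoint tents of width $1/(N\lambda)$ plus holes contributing $\sum_{m>n}1/\lambda_m$, and you explicitly defer ``the hard part.'' This is not a routine omission — it is the entire analytic content of (ii). The standard route to bounding the length of level sets of a harmonic function is the coarea formula run backwards, $\int_\lambda^{2\lambda}\mathcal H^1(\{u=s\})\,ds=\int_{\{\lambda<u<2\lambda\}}|\nabla u|\,dA=\int_{\{\lambda<u<2\lambda\}}|H(\mu)'|\,dA$, which returns you precisely to the estimate the paper actually carries out in Step 3 of its proof: localizing to the regions $\Omega^{(l)}_k$ where $P(\mu)\asymp 2^l$, splitting $H(\mu)'$ into near and far contributions, exploiting the cancellation obtained by subtracting the ``average'' $2^{l+1}\,dm$ from $d\mu$ on each child arc, and computing the primitive of $\zeta/(\zeta-z)^2$ explicitly to get $\int_{\Omega^{(l)}_k}|H(\mu)'|\,dA\lesssim 2^l|I^{(l)}_k|$. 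Your reformulation therefore does not circumvent this work; worse, a bound on $\mathcal H^1(\{u=\lambda\})$ for \emph{every fixed} $\lambda$ (rather than in $\lambda$-average) is strictly more delicate, since a level curve of a harmonic function can a priori oscillate and have length far exceeding the diameter of the component of $\{u>\lambda\}$ it bounds — exactly the possibility your tent picture assumes away. Until the gradient estimates (or an equivalent rectifiability argument for the level curves) are supplied, (ii) is unproven.
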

If we take $w(t)=t^\alpha$ with $0 < \alpha <1$, then condition (i) implies that $\mu$ charges no Beurling-Carleson set \cite{shapiro1964weakly}, while (ii) shows that $S_\mu$ is not cyclic in $\B$. Hence not every cyclic singular inner function in the Bergman space is cyclic in $\B$, answering the problem left open in \cite{brown1991multipliers}. This brings to light a new remarkable discrepancy on the structure of $M_z$-invariant subspaces between the Hardy spaces and Bergman spaces. Although Beurling's Theorem on the Hardy spaces naturally carries over their corresponding limiting case $BMOA$ (see \cite{aleksandrovinv}), the Korenblum-Roberts Theorem on the Bergman spaces breaks down in $\B$. It turns out that the negative answer to the above mentioned question has an immediate consequence to another related problem in the context of model spaces, mentioned in \cite{starinvsmooth}. As a consequence of \thref{THM:Modelspaces} and \thref{THM:NoMOC} with $w(t)= t\log(e/t)$, we obtain the following conclusion.

\begin{cor}\thlabel{Modelneg} 
There exists a singular inner function $\Theta$ for which the following distinctive phenomena occur:
\begin{enumerate}
    \item[(i)] $K_\Theta \cap \bigcup_{p>1} W^p = \{0\}$.
    \item[(ii)] The reproducing kernels of $K_\Theta$ belong to $W^1$ and hence bounded analytic functions in $K_\Theta \cap W^1$ form a dense subset in $K_\Theta$.
    \end{enumerate}
    
\end{cor}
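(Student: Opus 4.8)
The plan is to specialise \thref{THM:NoMOC} to the majorant $w(t) = t\log(e/t)$ on $[0,1]$ and to take $\Theta := S_\mu$, where $\mu = \mu(w)$ is the singular probability measure it produces. First I would check that this $w$ is admissible for \thref{THM:NoMOC}: it is continuous and non-negative on $[0,1]$ with $w(0)=0$; it is non-decreasing, since $w'(t) = -\log t \geq 0$ on $(0,1]$; it is concave, since $w''(t) = -1/t < 0$, and a concave function vanishing at the origin is sub-additive; so it is a majorant, and moreover $w(t)/t = \log(e/t) \to \infty$ as $t\to 0^+$, which is exactly the extra hypothesis of \thref{THM:NoMOC}. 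That theorem then supplies a singular probability measure $\mu$ on $\T$ with $\mu(I) \leq |I|\log(e/|I|)$ for every arc $I\subset\T$ and with $S_\mu \in W^1$; I set $\Theta = S_\mu$.

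For part (ii), since $\Theta = S_\mu\in W^1$, the reproducing kernels $k_\lambda$ of $K_\Theta$, $\lambda\in\D$, all belong to $W^1$ --- this is precisely the observation recorded just before \thref{THM:NoMOC}. Each $k_\lambda$ is also a bounded analytic function, because $k_\lambda(z) = (1 - \conj{\Theta(\lambda)}\Theta(z))/(1 - \conj{\lambda}z)$ with $\Theta\in H^\infty$. Since the linear span of $\{k_\lambda : \lambda\in\D\}$ is dense in $K_\Theta$, the bounded analytic functions lying in $K_\Theta\cap W^1$ are dense in $K_\Theta$, which is (ii). The same kernels span a dense subspace of $K^1_\Theta$ as well, so the analogous density holds there, consistent with part (ii) of \thref{THM:Modelspaces}, which applies because $S_\mu\in W^1$ forces the permanence property.

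For part (i), by the criterion of \cite{starinvsmooth} recalled in the Introduction it suffices to show that $\mu$ charges no Beurling--Carleson set. Fix $\alpha\in(0,1)$. The function $t^{1-\alpha}\log(e/t)$ is continuous on $[0,1]$ and tends to $0$ as $t\to 0^+$, hence is bounded there by some constant $C_\alpha$, so $t\log(e/t)\leq C_\alpha\, t^\alpha$ for all $t\in[0,1]$. Combined with the arc estimate from \thref{THM:NoMOC}, this gives $\mu(I)\leq C_\alpha |I|^\alpha$ for every arc $I\subset\T$, and by the result of Shapiro \cite{shapiro1964weakly} cited in the Introduction such a measure charges no Beurling--Carleson set. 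Therefore $\bigcup_{p>1} W^p\cap K_\Theta = \{0\}$, which is (i).

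I do not expect a genuine obstacle: the statement is a true corollary, assembled from \thref{THM:NoMOC}, the density criterion of \thref{THM:Modelspaces}, and the Beurling--Carleson criteria of \cite{shapiro1964weakly} and \cite{starinvsmooth}. The only points requiring care are the elementary verification that $w(t)=t\log(e/t)$ meets the hypotheses of \thref{THM:NoMOC}, the comparison $t\log(e/t)\lesssim t^\alpha$ that transports the arc-growth bound into the scale of Beurling--Carleson sets, and being precise about whether the density in (ii) is taken in $K_\Theta = H^2\ominus\Theta H^2$ or in $K^1_\Theta$ --- in either case the dense span of reproducing kernels closes the argument.
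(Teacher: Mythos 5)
Your proposal is correct and follows essentially the route the paper intends (the paper only states the corollary as an immediate consequence of \thref{THM:Modelspaces} and \thref{THM:NoMOC} with $w(t)=t\log(e/t)$): you verify that this $w$ is an admissible majorant, use condition (i) of \thref{THM:NoMOC} together with the elementary domination $t\log(e/t)\leq C_\alpha t^\alpha$ and Shapiro's criterion to see that $\mu$ charges no Beurling--Carleson set, whence $K_\Theta\cap\bigcup_{p>1}W^p=\{0\}$ by the cited result of \cite{starinvsmooth}, and obtain (ii) from $S_\mu\in W^1$ via the reproducing kernels exactly as the remark preceding \thref{THM:NoMOC} indicates. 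No gaps.
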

\noindent
It was also asked in \cite{anderson1991inner} whether the condition $|f(z)| \gtrsim {\log}^{-1} (e/(1-|z|))$ for any $z \in \D$ of a Bloch function $f$, ensures that $f$ is cyclic in $\B$. Our \thref{THM:NoMOC} shows that this is not the case, answering Problem 1 in \cite{anderson1991inner} in the negative. In fact, given any majorant $w$ with $w(t)/t \to \infty$ as $t \to 0^+$, \thref{THM:NoMOC} provides a positive singular measure $\mu$ such that $S_\mu$ is not cyclic in $\B$ while condition (i) readily translates to the estimate 
\[
\abs{S_\mu(z)} \gtrsim \exp \left( - c\frac{w(1-|z|)}{1-|z|} \right), \qquad z \in \D , 
\]
for some numerical constant $c>0$. We conclude that there cannot be any bound from below of a Bloch function which ensures it to be cyclic in $\B$. This pathological behavior is very different from the context of Bergman-type spaces, see \cite{borichev1996estimates}. As previously mentioned, $S_\mu$ satisfies the permanence property in Bergman spaces if and only if $\mu$ is concentrated on a countable union of Beurling-Carleson sets. Our next result says that the situation in the Bloch space is completely different and no condition on the support of $\mu$ alone can describe singular inner functions satisfying the permanence property in $\B$.

\begin{thm}\thlabel{THM:Nosupp}
Let $E \subset \T$ be a closed set of Lebesgue measure zero. Then there exists a singular probability measure $\mu$ supported on $E$, such that 
\begin{itemize}
    \item[(i)] $\mu(I) \geq |I|$, for any dyadic arc $I$ with $I \cap E \neq \emptyset$, 
    \item[(ii)] $S_\mu \in W^1$, hence it satisfies the permanence property, that is, $\left[ S_\mu \right]_{\B} \cap H^\infty = S_\mu H^\infty.$

\end{itemize}
\end{thm}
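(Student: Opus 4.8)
\medskip
\noindent\textbf{Proof strategy.}
By the remark preceding the statement, assertion (ii) is automatic once we know $S_\mu \in W^1$: in that case the reproducing kernels of $K_{S_\mu}$ lie in $W^1 \cap K^1_{S_\mu}$ and span it densely, so part (ii) of \thref{THM:Modelspaces} yields the permanence property. Thus the entire content is to exhibit a singular probability measure $\mu$ carried by $E$ which simultaneously satisfies the lower bound (i) and for which $S_\mu \in W^1$. The lower bound (i) by itself is cheap: normalizing $|\T|=1$ and observing that every dyadic arc $J$ meeting $E$ has either one or two dyadic children meeting $E$ (never zero), one lets $\mu$ be the natural measure on $E$ obtained by splitting the mass equally among the children meeting $E$ at each stage; then $\supp\mu = E$, $\mu$ is a singular probability measure, and a dyadic arc $J$ of length $2^{-n}$ meeting $E$ satisfies $\mu(J)=\prod_{k=0}^{n-1}b_k^{-1}\ge 2^{-n}=|J|$ with branching numbers $b_k\in\{1,2\}$. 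The real work is to arrange, on top of (i), that $S_\mu\in W^1$, which in general forces a more careful, iterative choice of $\mu$ (à la Korenblum--Roberts) rather than the plain uniform measure.

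The mechanism for $S_\mu\in W^1$, i.e.\ $\int_{\D}|S_\mu'|\,dA<\infty$, is the following. Writing $S_\mu=e^{-\varphi}$ with $\varphi(z)=\int_\T\frac{\zeta+z}{\zeta-z}\,d\mu(\zeta)$, one has $|S_\mu'(z)|=|\varphi'(z)|\,e^{-U_\mu(z)}$, where $\varphi'(z)=-2\int_\T \zeta(\zeta-z)^{-2}\,d\mu(\zeta)$ and $U_\mu(z)=\int_\T \frac{1-|z|^2}{|\zeta-z|^2}\,d\mu(\zeta)$ is the Poisson integral of $\mu$. The naive estimate $|\varphi'(z)|\le 2U_\mu(z)/(1-|z|^2)$ is far too lossy; the decisive feature is that the Cauchy-type integral $\varphi'$ enjoys genuine cancellation when $\mu$ is close to arclength measure — indeed $\varphi'\equiv0$ for normalized arclength — and that this cancellation is available precisely where $E$ (hence the constructed $\mu$) is ``thick'', while where $E$ is ``thin'' the competing factor $e^{-U_\mu(z)}$ is exponentially small because $U_\mu(z)\gtrsim \mu(J)/(1-|z|)$ on Carleson boxes over arcs $J$ meeting $E$. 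Concretely I would run a multiscale (dyadic/martingale) decomposition of $\mu$: at $z=re^{i\theta}$ with $\delta=1-r$ and $J$ the dyadic $\delta$-arc of $\theta$, split $\mu=\mu_{\mathrm{loc}}+\mu_{\mathrm{far}}$ into its restriction to $J$ and its two neighbours, and the rest; bound $\int \zeta(\zeta-z)^{-2}\,d\mu_{\mathrm{far}}$ by comparing $\mu_{\mathrm{far}}$ on each dyadic annulus around $\theta$ with its average there (the arclength baseline) and summing a geometric series in the scale; and for $\theta$ within $O(\delta)$ of $E$ absorb $\mu_{\mathrm{loc}}$ into the factor $e^{-U_\mu(z)}$. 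For $\theta$ away from $E$ the contribution reduces to the classical finite single-atom integral, while the dangerous near-$E$ contributions assemble into a series over dyadic scales of the schematic form $\sum_n N_n\,(2^n\mu(J_n))\,e^{-c\,2^n\mu(J_n)}$, with $N_n$ the number of dyadic $2^{-n}$-arcs meeting $E$; one tries to force convergence by making $2^n\mu(J_n)$ grow suitably along every branch of the tree, which is exactly what the iterative construction of $\mu$ must deliver.

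The main obstacle is this balancing act for ``fat'' sets $E$, i.e.\ those whose covering ratios $\varepsilon_n:=2^{-n}\#\{\text{dyadic }2^{-n}\text{-arcs meeting }E\}$ tend to $0$ arbitrarily slowly and which may contain long stretches where $E$ meets \emph{every} dyadic subarc. On such a stretch the total-mass constraint $\mu(\T)=1$ prevents $\mu(J)/|J|$ from being large, so $e^{-U_\mu}$ gives nothing there and one is forced to extract honest cancellation from $\varphi'$; conversely, on stretches where $E$ is thin enough for $e^{-U_\mu}$ to be effective the cancellation is weak. Making these two mechanisms together cover all of $\D$, and — crucially — meshing them across the transition regions between thick and thin parts of $E$, is the delicate point: it should require building $\mu$ by an iterative redistribution of mass over dyadic generations (so that on the thick stretches $\mu$ is forced close to arclength, killing $\varphi'$, while on the thin stretches it is concentrated enough to make $e^{-U_\mu}$ summable over scales), followed by a careful bookkeeping argument summing the scale-by-scale estimates above. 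Once $S_\mu\in W^1$ is established, (ii) and hence the permanence property follow as in the first paragraph, completing the proof.
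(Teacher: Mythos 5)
Your reduction of (ii) to the membership $S_\mu\in W^1$ is correct, and the measure you construct in your first paragraph to secure (i) --- split the mass of a dyadic arc evenly between its two children when both meet $E$, and give it all to the one child that meets $E$ otherwise --- is in fact exactly the measure the paper uses; your decision to set it aside as ``probably insufficient'' and call for a more careful iterative choice is a misjudgment. But the essential point is that the proof stops where the work begins: you correctly identify the two competing mechanisms (cancellation in $H(\mu)'$ where $\mu$ is dyadically close to arclength, exponential smallness of $|S_\mu|$ where the density $\mu(I)/|I|$ is large) and correctly flag the thick/thin transition as the obstruction, but you never resolve it --- ``it should require \dots a careful bookkeeping argument'' is not an argument. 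As written, $S_\mu\in W^1$ is not established, so the proof has a genuine gap.

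The missing device is a combinatorial one. Define generations $\G_k$ of dyadic arcs adaptively by the \emph{thinning events}: given $I\in\G_k$, let $\widetilde{\G}(I)$ be the maximal dyadic subarcs of $I$ having a child disjoint from $E$, and let $\G_{k+1}$ consist of the children of these arcs that do meet $E$. Three things then come for free from your ``natural'' measure. First, the density doubles at each thinning event, so $\mu(I)/|I|\ge 2^k$ for $I\in\G_k$, whence $|S_\mu|\le e^{-c2^k}$ on the region $\Omega^{(k)}_j=Q_{I^{(k)}_j}\setminus\bigcup_{\widetilde J}Q_{\widetilde J}$. Second, between $I^{(k)}_j$ and the arcs of $\widetilde{\G}(I^{(k)}_j)$ every dyadic arc splits its mass evenly, so $\mu$ is dyadically uniform at exactly the scales seen from $\Omega^{(k)}_j$; this is what makes the cancellation quantitative, namely $\int_{\Omega^{(k)}_j}|H(\mu_{I^{(k)}_j})'|\,dA\lesssim \mu(I^{(k)}_j)$ (proved as in the estimate \eqref{H'est} inside the proof of \thref{THM:NoMOC}, by subtracting the value of the kernel at the center of each subarc), while the far part $\mu_{\T\setminus I^{(k)}_j}$ is handled by \thref{QnotELemma} together with the packing identity $\sum_{J\in\G_{k+1},\,J\subset I}|J|=|I|/2$, which gives $\sum_{j}|I^{(k)}_j|=2^{-k}$. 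Third, $\sum_j\mu(I^{(k)}_j)=1$ for every $k$, so by \thref{eqcondW11} the whole integral is bounded by $\sum_k e^{-c2^k}\sum_j\mu(I^{(k)}_j)+C\sum_k 2^{-k}<\infty$. Note in particular that your worry about thick stretches is unfounded: there you do not need $e^{-U_\mu}$ at all, since the dyadic uniformity alone gives the bound $\lesssim\mu(I^{(k)}_j)$, and the exponential factor is only needed to sum over the generations $k$, where it is guaranteed by the forced doubling of the density.
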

In order to construct a cyclic singular inner function in $\B$, the authors in \cite{anderson1991inner} provided a sufficient condition for functions in $\B$ to be cyclic. Our next result gives a  different sufficient condition which in a certain sense generalizes the above mentioned result.

\begin{thm}\thlabel{THM:SuffcycB} 
Let $f \in H^\infty$ be zero-free in $\D$ with $\norm{f}_{H^\infty}\leq 1$, and let $\nu=\nu_f$ denote its associated Herglotz-Nevanlinna measure. Assume there exists a constant $C= C(f)>0$, such that for any pair of contiguous arcs $I,I' \subset \T$ of same length $|I|= |I'|$, we have
\begin{equation}\label{expzyginfcond}
    \abs{ \frac{\nu(I)}{|I|} -\frac{\nu(I')}{|I'|} } \leq C \inf \exp \left( - \frac{\nu(J)}{|J|} \right), 
\end{equation}
where the infimum is taken over all arcs $J \subset \T$ with $J\supset I \cup I'$.  Then $f$ is cyclic in $\B$.

% (b) The function $1/f \in \B$ if and only if there exists a constant $C=C(f)>0$ such that for any pair of contiguous arcs $I, I' \subset \T$ of the same length, we have
% \begin{equation}\label{expzyginfcond2}
%     \abs{ \frac{\nu(I)}{|I|} -\frac{\nu(I')}{|I'|} } \leq C % \exp \left( - \frac{\nu(I)}{|I|} \right). 
% \end{equation}

\end{thm}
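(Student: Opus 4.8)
\emph{Strategy.} The plan is to prove the stronger assertion $1 \in [f]_{\B}$ by approximating $1$ through powers of $f$. Since $f$ has no zeros and $\norm{f}_{H^\infty} \le 1$, write $\log f(z) = -\int_{\T} \frac{\zeta + z}{\zeta - z}\,d\nu(\zeta)$ with $\nu = \nu_f \ge 0$, so that $\abs{f(z)} = e^{-u(z)}$ where $u \ge 0$ is the Poisson extension of $\nu$ to $\D$; differentiating the Herglotz representation gives $(1-\abs{z})\abs{f'/f(z)} \lesssim u(z)$, and hence for $0 < s \le 1$ the analytic function $f^s = \exp(s\log f)$ is bounded by $1$ and satisfies
\[
(1-\abs{z})\,\abs{(f^s)'(z)} = s\,(1-\abs{z})\,\abs{\tfrac{f'}{f}(z)}\,\abs{f(z)}^{s} \lesssim s\,u(z)\,e^{-s\,u(z)} \lesssim 1 .
\]
Thus $\sup_{0 < s \le 1} \norm{f^s}_{\B} < \infty$, while $f^s(z) \to 1$ pointwise as $s \to 0^+$. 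Since $\B$ is the dual of the separable space $W^1$, bounded subsets of $\B$ are weak-star sequentially compact, and point evaluations at points of $\D$ are represented by elements of $W^1$ and so are weak-star continuous. Therefore the set $T = \{t \in (0,1] : f^t \in [f]_{\B}\}$ is closed: weak-star cluster points of bounded families $\{f^{t_n}\}$ with $t_n \in T$ are powers $f^{t_*}$ lying in the weak-star closed space $[f]_{\B}$. If $a := \inf T$ were positive, then $f^a \in T$, and since $f^a$ again has no zeros, has $H^\infty$-norm at most $1$, and satisfies \eqref{expzyginfcond} (now with constant $aC$, as $\nu_{f^a} = a\nu_f$), applying the fact to be proven to $f^a$ would give $f^{a(1-\epsilon)} = (f^a)^{1-\epsilon} \in [f^a]_{\B} \subseteq [f]_{\B}$ for small $\epsilon$, contradicting minimality. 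Hence $\inf T = 0$, so $1 = \lim_{t \to 0^+} f^t \in [f]_{\B}$ and $f$ is cyclic. It thus suffices to prove that $f^{1-\epsilon} \in [f]_{\B}$ for every sufficiently small $\epsilon > 0$.

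\smallskip
Fix such an $\epsilon$ and set $h(z) = \exp\!\big(\int_{\T} \frac{\zeta + z}{\zeta - z}\,d\nu(\zeta)\big)$, so that $f^{1-\epsilon} = f\,h^{\epsilon}$ with $h$ zero-free and analytic but typically not in $\B$. Following Roberts' strategy in the Bergman setting, I would decompose the Herglotz measure as $\nu = \nu_1 + \dots + \nu_N$, with corresponding factorization $h = h_1 \cdots h_N$, $h_j = \exp(\int_{\T} \frac{\zeta+z}{\zeta-z}\,d\nu_j)$, chosen so that each $h_j^{\epsilon}$ admits polynomials $q^{(j)}_m \to h_j^{\epsilon}$ (pointwise) with $\abs{q^{(j)}_m(z)} \lesssim \abs{h_j(z)}^{\epsilon} + 1$ and a matching bound for $(1-\abs{z})\,\abs{(q^{(j)}_m)'(z)}$. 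One then removes the factors one at a time: writing $u_j$ for the Poisson extension of $\nu_j$ and $\rho_k := \nu - \epsilon\sum_{j\le k}\nu_j \ge (1-\epsilon)\nu \ge 0$, one has $\psi_k := f h_1^{\epsilon}\cdots h_k^{\epsilon} = \exp(-\int_{\T}\frac{\zeta+z}{\zeta-z}\,d\rho_k)$ with $\norm{\psi_k}_{\B} \lesssim 1$ as above, and a product-rule computation using $u_j \le u$ and $P\rho_k \ge (1-\epsilon)u$ shows that the polynomial multiples $q^{(k+1)}_m\,\psi_k \in [f]_{\B}$ have $\sup_m \norm{q^{(k+1)}_m \psi_k}_{\B} \lesssim_{\epsilon} 1$; since they converge to $\psi_{k+1}$ pointwise, hence weak-star in $\B$, we get $\psi_{k+1}\in[f]_{\B}$. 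Iterating from $\psi_0 = f$ yields $f^{1-\epsilon} = \psi_N \in [f]_{\B}$.

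\smallskip
The crux, and essentially the only point where hypothesis \eqref{expzyginfcond} enters, is the decomposition lemma: for any prescribed $\delta > 0$ one can split $\nu = \nu_1 + \dots + \nu_N$ so that each piece is $\delta$-regular, in the sense that the increments of its normalized densities $\nu_j(I)/\abs{I}$ over contiguous equal-length arcs are everywhere at most $\delta$ --- this smallness being exactly what yields, for small $\epsilon$, the controlled polynomial approximants of $h_j^{\epsilon}$ used above (playing the role of Roberts' condition $\mu_j(I) \le \delta\,\abs{I}\log(e/\abs{I})$ in the Bergman argument). To produce such a decomposition I would first extract from \eqref{expzyginfcond} the structural fact that the densities $\nu(I)/\abs{I}$ vary slowly along the dyadic tree --- in particular they cannot tend to infinity along any chain, and the passage between the ``bulk'', where the density is large, and the complementary arcs, where it is small, is gradual, at a rate damped by $\exp(-\nu(J)/\abs{J})$ --- and then distribute $\nu$ over the $N$ pieces so as to push these increments below $\delta$. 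I expect this decomposition lemma, together with the accompanying quantitative estimate on polynomial approximants of $\delta$-regular exponentials (controlling both modulus and weighted derivative), to be the main obstacle; the peeling argument and the compactness reduction of the first paragraph are then comparatively routine. One could alternatively argue by duality: $f$ is cyclic in $\B$ precisely when the pre-annihilator of $[f]_{\B}$ in $W^1$ --- a model-type space built from the inner factor of $f$, cf.\ \thref{THM:Modelspaces}(i) --- reduces to $\{0\}$, and \eqref{expzyginfcond} could be used to force any member of it to vanish; the approximation route above seems the more direct of the two.
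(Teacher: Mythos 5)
Your proof is not complete: the entire analytic core of the argument is deferred to two unproven claims, and these are precisely where the difficulty of the theorem lies. The reduction in your first paragraph (the uniform bound $\sup_s\|f^s\|_{\B}<\infty$, the weak-star compactness, and the infimum argument on $T=\{t: f^t\in[f]_{\B}\}$) is sound, but it only converts the problem into showing $f^{1-\epsilon}\in[f]_{\B}$; it does no real work. The Roberts-style scheme you then outline hinges on (a) a decomposition $\nu=\nu_1+\dots+\nu_N$ into ``$\delta$-regular'' pieces, and (b) an approximation lemma producing polynomials $q_m\to h_j^{\epsilon}$ with $|q_m|\lesssim|h_j|^{\epsilon}+1$ \emph{and} a compatible bound on $(1-|z|)|q_m'|$. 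You state explicitly that you expect these to be the main obstacle, and indeed neither is established. Moreover, there is reason to doubt that (a)+(b) can work as stated: $\delta$-regularity of a piece $\nu_j$ is just a smallness condition on its Zygmund increments, which discards the essential feature of hypothesis \eqref{expzyginfcond} — the coupling between the oscillation of the densities and the exponential $\exp(-\nu(J)/|J|)$ of the density itself. The paper's \thref{THM:NoMOC} shows that in $\B$, unlike in Bergman spaces, smallness of densities alone (for any prescribed majorant) is compatible with strong non-cyclicity, so a Korenblum--Roberts-type decomposition into ``small'' pieces cannot be the right mechanism here; the derivative bound in (b) required for Bloch-norm control is exactly the kind of estimate that fails to follow from a pure smallness hypothesis.

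For contrast, the paper's proof avoids any decomposition of $\nu$. It takes $h_n(z)=1/f(r_nz)$ with $r_n=1-2^{-n}$; these are automatically in $H^\infty$ (so $h_nf\in[f]_{\B}$ by \thref{bddcyc}), converge pointwise so that $h_nf\to1$, and the whole proof consists of the uniform estimate $\sup_n\|f/f(r_n\cdot)\|_{\B}<\infty$. That estimate is where \eqref{expzyginfcond} enters, through the Zygmund-measure machinery (\thref{ZygBloch}, \thref{Herg'est}) and the key Lemma 5.3, which converts \eqref{expzyginfcond} into slow variation of $\nu(2^nI)/|2^nI|$ over roughly $e^{\nu(I)/|I|}$ doublings; this is used to control both $P(\nu)(r_nz)-P(\nu)(z)$ and $(1-|z|)|H(\nu)'(z)|$ by $\exp(-\nu(I_{r_nz})/|I_{r_nz}|)$. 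If you want to salvage your route, you would need to formulate and prove the decomposition and approximation lemmas in the Bloch setting, and it is far from clear they hold; the dilation argument is the one that actually closes.
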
 
Note that condition \eqref{expzyginfcond} clearly implies that $\nu$ is a Zygmund measure on $\T$, that is, the expression in the left hand side of \eqref{expzyginfcond} is uniformly bounded over the collection of pairs of contiguous arcs of the same length. However, condition \eqref{expzyginfcond} is considerably stronger as it asserts that if the density of the measure over a an arc is large, then the oscillation of the densities of the measure on subarcs must be substantially smaller. It is worth mentioning that the construction in \cite{anderson1991inner} of a singular measure $\mu$ for which $S_\mu$ is cyclic in $\B$ actually implies that $\mu$ satisfies the stronger condition
\[
\sup_{|I|\leq \delta} \abs{ \frac{\mu(I)}{|I|} -\frac{\mu(I')}{|I'|} } \leq C \inf_{|J| \geq \delta} \exp \left( - \frac{\mu(J)}{|J|} \right),
\]
which readily implies \eqref{expzyginfcond}. It turns out that if one removes the infimum in condition \eqref{expzyginfcond}, then one retains a description of bounded analytic functions which are invertible in $\B$. 

\begin{thm}\thlabel{THM:Invertibility B} 
Let $f \in H^\infty$ be zero-free in $\D$ with $\norm{f}_{H^\infty}\leq 1$, and let $\nu= \nu_f$ denote its associated Herglotz-Nevanlinna measure. Then $1/f \in \B$ if and only if there exists a constant $C=C(f)>0$ such that for any pair of contiguous arcs $I,I' \subset \T$ of the same length, we have
\begin{equation}\label{expzyginfcond2}
    \abs{ \frac{\nu(I)}{|I|} -\frac{\nu(I')}{|I'|} } \leq C \exp \left( - \frac{\nu(I)}{|I|} \right). 
\end{equation}
\end{thm}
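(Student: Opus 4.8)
The plan is to translate the condition $1/f \in \B$ into a statement about the Herglotz-Nevanlinna representation and then compare it with the oscillation condition \eqref{expzyginfcond2}. Since $f \in H^\infty$ has no zeros and $\|f\|_{H^\infty} \le 1$, we may write $-\log f(z) = \int_\T \frac{\zeta+z}{\zeta-z}\, d\nu_f(\zeta) + i c$ for a real constant $c$, where $\nu = \nu_f \ge 0$ is the associated measure (here $\nu_f = -\log|f|\,dm + d\mu$ since $B \equiv 1$). Writing $u = -\log|f| = P[\nu]$ for the Poisson extension and $\tilde u$ for its harmonic conjugate, we have $1/f = e^{u + i\tilde u}$, so $(1-|z|)\, |(1/f)'(z)| = (1-|z|)\,|1/f(z)|\,|\nabla(u+i\tilde u)(z)| = e^{u(z)}\,(1-|z|)\,|\nabla u(z)|$, using that $|\nabla(u+i\tilde u)| = \sqrt{2}\,|\nabla u|$ up to the usual Cauchy-Riemann bookkeeping. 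Hence $1/f \in \B$ is equivalent to the two-sided estimate
\[
(1-|z|)\,|\nabla u(z)| \lesssim e^{-u(z)}, \qquad z \in \D.
\]
So the entire theorem reduces to showing that this gradient estimate for the Poisson extension of a positive measure $\nu$ is equivalent to the discrete oscillation condition \eqref{expzyginfcond2} on dyadic (or contiguous equal-length) arcs.

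The first main step is the standard dictionary between $(1-|z|)|\nabla P[\nu](z)|$ and averages of $\nu$. For a point $z$ with $1-|z| \approx |I|$ and $z$ "over" the arc $I$, one has $u(z) = P[\nu](z) \approx \nu(I)/|I|$ (up to a tail which is controlled once $\nu$ is finite and, more importantly, comparable at the relevant scale by the Zygmund-type regularity that \eqref{expzyginfcond2} will supply). The crucial identity is that the gradient of the Poisson extension at such a $z$ is comparable to the difference quotient of the measure across the two halves of $I$, i.e.\ $(1-|z|)|\nabla u(z)| \approx |\nu(I^+)/|I^+| - \nu(I^-)/|I^-||$ plus contributions from neighboring arcs of the same generation; this is the same computation that underlies the characterization of Zygmund measures via Poisson extensions (see Garnett–Marshall / the Anderson–Clunie–Pommerenke circle of ideas). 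Thus the sup over $z$ near the arc $I$ of $(1-|z|)|\nabla u(z)|$ is comparable to $\sup$ of $|\nu(I)/|I| - \nu(I')/|I'||$ over contiguous equal-length arcs $I, I'$ inside a bounded dilate of $I$. Matching this against the right-hand side $e^{-u(z)} \approx \exp(-\nu(I)/|I|)$ gives exactly \eqref{expzyginfcond2}. One direction (condition $\Rightarrow$ invertibility) is then a matter of summing/telescoping these estimates and checking the tail of the Poisson kernel; the other direction (invertibility $\Rightarrow$ condition) is reading off \eqref{expzyginfcond2} from the gradient bound at well-chosen points $z$.

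The step I expect to be the main obstacle is making the comparison $u(z) \approx \nu(I)/|I|$ and $(1-|z|)|\nabla u(z)| \approx$ (local oscillation of densities) genuinely \emph{two-sided} and uniform, because a priori the Poisson average $\nu(I)/|I|$ at scale $|I|$ can differ substantially from the average at a larger scale, and one needs to know that \eqref{expzyginfcond2} itself forces enough regularity (a Zygmund/doubling-type control on the densities $\nu(I)/|I|$ across scales) for these comparisons to close without circularity. The resolution is that \eqref{expzyginfcond2} implies $\nu$ is a Zygmund measure — indeed, dropping the exponential on the right (which is $\le 1$ when $\nu \ge 0$ only after normalizing, so one must be slightly careful and instead use that $\exp(-\nu(I)/|I|) \le C/(\nu(I)/|I|)$ type bounds, or argue directly) — and Zygmund measures have the property that $\nu(I)/|I|$ changes by at most $O(\log(1/|I|))$ across a bounded number of dyadic generations, which is more than enough to control the Poisson tails and upgrade the one-scale estimate to the pointwise estimate on all of $\D$. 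The remaining work is then routine: a Whitney-type decomposition of $\D$ into boxes over dyadic arcs, the elementary kernel estimates for $\nabla P$, and a telescoping argument to pass between contiguous arcs of equal length and nested arcs.
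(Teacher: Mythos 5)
Your reduction of $1/f\in\B$ to the gradient estimate $(1-|z|)\,|\nabla P(\nu)(z)|\lesssim e^{-P(\nu)(z)}$, and the plan of matching this against the oscillation of the densities $\nu(I)/|I|$, is exactly the route the paper takes (via the Herglotz transform and a kernel bound for $H(\nu)'$ in terms of the symmetric differences $|\nu(I(z,t))-\nu(I(z,-t))|$). The genuine gap is in your proposed resolution of the step you yourself flag as the main obstacle. When you sum the contributions of the dyadic annuli $2^{n}(1-|z|)\leq t< 2^{n+1}(1-|z|)$, the kernel yields a factor $2^{-n}$ per annulus, while \eqref{expzyginfcond2} controls the symmetric difference at scale $2^{n}(1-|z|)$ by $\exp\bigl(-\nu(2^{n}I_z)/|2^{n}I_z|\bigr)$, i.e.\ by the density at the \emph{large} scale. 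To land on $e^{-\nu(I_z)/|I_z|}$ you need $\nu(2^{n}I_z)/|2^{n}I_z|\geq \nu(I_z)/|I_z|-\varepsilon n$ with $\varepsilon<\log 2$. The plain Zygmund property, which is all you invoke, only gives $\varepsilon=\|\nu\|_{*}$, a constant that can well exceed $\log 2$, in which case $\sum_n 2^{-n}e^{\|\nu\|_{*}n}$ diverges and the argument does not close. The paper's Lemma \ref{nou} supplies the missing self-improvement: telescoping \eqref{expzyginfcond2} in the exponentiated form $\bigl|e^{\nu(2I)/|2I|}-e^{\nu(I)/|I|}\bigr|\leq C_1$ shows the density moves by at most $\min\{\varepsilon n,1\}$, with $\varepsilon$ arbitrarily small, over all $|n|\leq \delta e^{\nu(I)/|I|}$ generations; only beyond that range does one revert to the crude Zygmund bound, where the accumulated factor $2^{-T}$ with $T=\delta e^{\nu(I_z)/|I_z|}$ is already dominated by $e^{-\nu(I_z)/|I_z|}$. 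This lemma is the technical heart of the sufficiency direction and is absent from your outline.

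A second issue concerns the necessity direction. You propose to read off \eqref{expzyginfcond2} from the gradient bound at well-chosen points, which requires replacing $P(\nu)(z_I)$ by $\nu(I)/|I|$. The standard asymptotic \eqref{estimacio} for Zygmund measures, $P(\nu)(z)=\nu(I_z)/|I_z|+\mathcal{O}(1)$, is useless here because the target right-hand side is exponentially small: an additive $O(1)$ error swamps $e^{-\nu(I)/|I|}$. One needs the sharpened version \eqref{estimacio2}, $P(\nu)(z)=\nu(I_z)/|I_z|+\mathcal{O}\bigl(e^{-\nu(I_z)/|I_z|}\bigr)$, which must be extracted from the hypothesis $(1-|z|^2)|H(\nu)'(z)|\lesssim e^{-P(\nu)(z)}$ itself by redoing the Poisson-average comparison. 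Your sketch does not account for this, so the backward implication is also incomplete as written. (The slip $|\nabla(u+i\tilde u)|=\sqrt{2}\,|\nabla u|$ is harmless; for $g=u+i\tilde u$ analytic one has $|g'|=|\nabla u|$.)
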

For a description of elements in $H^\infty \cap \B_0$ of similar flavour, we refer the reader to the work of Bishop in \cite{bishop1990bounded}. 
%for a similar description of elements in $H^\infty \cap \B_0$. In light of \thref{THM:SuffcycB} and \thref{THM:Invertibility B}, it is tempting to ask whether the notions of invertibility and cyclicity in $\B$ of bounded analytic functions $f$ are the same. 
We now turn our attention to the problem of determining the relationship between invertibility and cyclicity, which is a vastly investigated topic in various spaces of analytic functions. It it is well-known that in the setting of commutative Banach algebras of analytic functions with units, the notions of cyclicity and invertibility are equivalent, while moving towards classes of analytic function for which the algebra property fails, such as the classical Dirichlet spaces and the Hardy spaces, invertibility is strictly stronger than cyclicity, see \cite{brown1990invertible}. However, the situation for Bergman spaces attracted considerable amount of attention and remained open for quite some time until it was resolved by Borichev and Hedenmalm in \cite{borichevhedenmalmcyclicity}. In their deep work, they constructed invertible functions in the Bergman Spaces, which are not cyclic therein, showing that the two notions are quite different in that setting. Our next result asserts that a similar phenomenon occurs in the Bloch space. This answers Problem 2 from \cite{anderson1991inner} in the negative.

\begin{thm}\thlabel{THM:CycvsInv} 
There exists $f \in \B$ with $1/f \in \B$ such that $f$ is not cyclic in $\B$.
\end{thm}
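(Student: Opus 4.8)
The plan is to produce $f\in\B$ with $1/f\in\B$ which is not cyclic by taking $f = S_\mu^{-1}$ — or more precisely, a function whose modulus is comparable to $|S_\mu|$ but which is bounded away from zero — for a carefully chosen singular measure $\mu$. The guiding idea is that Theorem \ref{THM:Invertibility B} converts invertibility of $1/f$ into a \emph{quantitative Zygmund-type estimate} \eqref{expzyginfcond2} on the Herglotz–Nevanlinna measure $\nu_f$, while Theorem \ref{THM:wsets} (or Theorem \ref{THM:NoMOC}/Theorem \ref{THM:Nosupp}) gives a mechanism for producing non-cyclic singular inner functions whose associated measures live on thin sets. So the strategy is to engineer a single positive singular measure $\nu$ on $\T$ that simultaneously (a) satisfies the pointwise bound $|\nu(I)/|I| - \nu(I')/|I'|| \le C\exp(-\nu(I)/|I|)$ for contiguous equal-length arcs, guaranteeing via Theorem \ref{THM:Invertibility B} that the outer function $f$ with $-\log|f| = $ (density part of $\nu$) plus suitable singular mass has $1/f\in\B$, and (b) has a singular part $\mu_C$ charging a Beurling–Carleson set (or a countable union of sets of finite $w$-entropy), so that by Theorem \ref{THM:wsets} the factor $S_{\mu_C}$ satisfies the permanence property and hence is not cyclic; one then checks that $f$ itself inherits non-cyclicity because $[f]_\B \subseteq [S_{\mu_C}]_\B$ (monotonicity of the invariant-subspace map under division) and $1\notin [S_{\mu_C}]_\B$.

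The concrete construction I would carry out: build $\nu = h\,dm + \mu$ where $h$ is a nonnegative Zygmund-smooth density and $\mu$ is singular and supported on a Beurling–Carleson set $E$, with the two pieces balanced so that wherever $\mu$ contributes large mass near $E$, the density $h$ is large too — this is exactly what makes the right-hand side $\exp(-\nu(I)/|I|)$ of \eqref{expzyginfcond2} small precisely where the oscillation on the left could be large. A clean way to force this is to let $h$ blow up like $\nu(I)/|I|$ near $E$, i.e. take $h(\zeta) \asymp \log(1/\mathrm{dist}(\zeta,E))$ type growth matched to the dyadic mass distribution of $\mu$; then on an arc $I$ meeting $E$ one has $\nu(I)/|I|$ large, and the \emph{difference} across contiguous arcs is controlled by how fast $h$ varies, which the Dini/Zygmund regularity of $h$ keeps below $C\exp(-\nu(I)/|I|)$. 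The bounded analytic function $f$ is then defined by $f = F\cdot S_\mu$ with $F$ the outer function with boundary modulus $e^{-h}$; one must check $f\in H^\infty\subset\B$ (automatic since $\|f\|_\infty\le 1$) and that $\nu_f = \nu$ up to the normalizations in the definition of $\nu_f$, so Theorem \ref{THM:Invertibility B} applies and $1/f\in\B$.

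The main obstacle is the delicate \emph{matching estimate}: arranging $h$ and $\mu$ so that \eqref{expzyginfcond2} holds uniformly over all pairs of contiguous equal-length arcs, \emph{including} the arcs far from $E$ (where $\nu(I)/|I|$ is moderate and the exponential on the right is only of order $1$, so $h$ must genuinely be Zygmund there) and the arcs deep inside the Carleson box over $E$ (where both sides are tiny and the ratio has to be controlled at all dyadic scales). I expect this to require a recursive dyadic construction of $\mu$ on the Beurling–Carleson set $E$ — distributing mass level by level so that $\mu(I)/|I|$ changes slowly from a dyadic arc to its children while still accumulating enough total mass to be singular — together with choosing $h$ as a mollified version of $\zeta\mapsto \mu(I_\zeta)/|I_\zeta|$ at the relevant scale. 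Verifying non-cyclicity is comparatively routine once the permanence property for $S_\mu$ is in hand: since $f = F S_\mu$ and $F$ is a factor, $[f]_\B \subseteq [S_\mu]_\B \cap H^\infty = S_\mu H^\infty \subsetneq H^\infty$, so $1\notin[f]_\B$. The real work, and the place where the careful choice of $E$ (e.g. $E$ a Carleson set thin enough that $S_\mu\in W^1$, invoking Theorem \ref{THM:Nosupp}) pays off, is the simultaneous fulfillment of the invertibility inequality.
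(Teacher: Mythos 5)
Your overall strategy does not match the paper's, and it runs into a quantitative obstruction that I believe is fatal to the construction as you describe it. If $f\in H^\infty$ has no zeros, $\norm{f}_{H^\infty}\leq 1$, and $1/f\in\B$, then $\abs{1/f(z)}\lesssim \log\big(e/(1-|z|)\big)$, so $P(\nu_f)(z)\leq \log\log\big(e/(1-|z|)\big)+O(1)$ and hence $\nu_f(I)\lesssim |I|\,\log\log(e/|I|)$ for every arc $I$. But a positive singular measure obeying this density bound cannot charge any Beurling--Carleson set $E$: writing $s_k=\sum|I|$ over dyadic arcs $I$ of generation $k$ meeting $E$, the Beurling--Carleson condition gives $\sum_k s_k<\infty$, while the density bound gives $\mu(E)\lesssim s_k\log k$ for every $k$, and $\liminf_k s_k\log k=0$. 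The same computation rules out countable unions of such sets, and a fortiori the measures of \thref{THM:Nosupp} and \thref{THM:NoMOC} (whose dyadic densities blow up like $2^k$ on the arcs meeting the support). So the two mechanisms you want to combine --- permanence via thin support or $W^1$-membership, and invertibility via \eqref{expzyginfcond2} --- exclude each other; the ``delicate matching estimate'' you defer cannot be carried out. There is a second, independent worry: singular measures satisfying \eqref{expzyginfcond2} do exist (the construction in \cite{anderson1991inner} even satisfies the stronger condition \eqref{expzyginfcond}), but by \thref{THM:SuffcycB} that stronger condition forces \emph{cyclicity}; you offer no tool that certifies non-cyclicity for a singular inner function whose measure satisfies \eqref{expzyginfcond2} but not \eqref{expzyginfcond}, and the paper does not settle whether a bounded analytic $f$ with $1/f\in\B$ can fail to be cyclic.

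The paper avoids bounded analytic functions altogether. It takes a countable set $E\ni 0$ such that neither $\C\setminus E$ nor $\C\setminus E^{-1}$ contains arbitrarily large discs, and lets $f$ be a universal covering map of $\D$ onto $\C\setminus E$; Bloch's theorem then puts both $f$ and $1/f$ in $\B$. Non-cyclicity is obtained not through inner factors but by showing that $\{fh: h\in H^\infty\}$ is a proper weak-star closed subspace containing $[f]_{\B}$: Plessner's theorem gives that the non-tangential cluster set of $f$ is the whole sphere a.e., the lower bound $(1-|z|^2)|f'(z)|\geq \mathrm{dist}(f(z),E)$ yields uniform bounds on the $h_n$ in any weak-star convergent sequence $fh_n$, and Krein--Smulian closes the argument. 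Your final reduction (permanence of $S_\mu$ implies $1\notin[f]_{\B}$) is fine in principle, but the function to which you would apply it cannot exist with the properties you require.
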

\noindent
The crucial property of a function $f$ as above is that $\abs{f'(z)}$ is as huge as possible for a considerably large set of points $z\in \D$. This aligns well with the principal philosophy surrounding the work in \cite{borichevhedenmalmcyclicity}, that an impediment to cyclicity of a Bergman space function is that the function enjoys maximal growth at a "massive" set in $\D$.

% \begin{thm}\thlabel{THM:CycvsInv} 
% There exists a singular inner function $S_\mu$ satisfying $1/S_\mu \in \B$ and for which precisely one of the following phenomenons occur.
% \begin{enumerate}
%     \item[(i)] Either $S_\mu$ is not cyclic in $\B$,
%     \item[(ii)] or, $S_\mu$ is not cyclic in $\B$, yet the set of all weak-star sequential limits in % $\B$ of functions of the form $S_\mu Q$, where $Q$ is an analytic polynomial, does not contain all of $\B$.
% \end{enumerate}
% \end{thm}
% \noindent
% In case $(i)$ occurs, this answers yet another question left open in \cite{anderson1991inner} (see problem 2). However, if $(ii)$ holds, then this means that sequential limits are not sufficient for (weak-star) cyclicity in $\B$, not even for singular inner functions. In fact, we shall provide a different argument asserting this claim, which should be viewed as a Bloch space version of Theorem 1.2 appearing in \cite{borichev1995cyclicity}. Recall that classical outer functions in $\B$ are always cyclic therein.
%We shall deduce the occurrence of a similar phenomena in the Bloch space setting, which answers yet another question left open in \cite{anderson1991inner} (see problem 2). 
%\begin{thm}\thlabel{THM:CycvsInv} There exists a singular inner function $S_\mu$ for which $1/S_\mu$ belongs to $\B$, yet $S_\mu$ is not cyclic in $\B$.
%\end{thm}
Our final result should be viewed as a pre-cautionary warning when considering weak-star closures of $M_z$-invariant subspaces generated by Bloch functions. In fact it may be viewed as a Bloch space version of Theorem 1.2 by Borichev and Hedenmalm in \cite{borichev1995cyclicity}. It asserts that sequential limits are not enough in order to capture the behavior of $M_z$-invariant subspaces in $\B$. The reader should bear in mind that classical outer functions in $\B$ are always cyclic therein.

\begin{thm} \thlabel{THM:Seqcyc}
There exists an outer function $f \in \B$ with $1/f \in H^\infty$, such that the set of all weak-star sequential limits in $\B$ of functions of the type $f(z)Q(z)$, where $Q$ is an analytic polynomial, is a proper subspace of $\B$.
%  which fail to have finite radial limits at a set of positive Lebesgue measure on $\T$.
\end{thm}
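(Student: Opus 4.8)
The plan is to build an outer function $f \in \B$ with $1/f \in H^\infty$ whose weak-star sequential polynomial-multiple closure misses some explicit Bloch function, even though $[f]_\B = \B$ since $f$ is outer. The strategy is the standard duality dichotomy: a function $h \in \B$ fails to lie in the weak-star sequential closure of $\{fQ : Q \text{ polynomial}\}$ as soon as one can produce, for every fixed $M>0$, a functional $\ell \in W^1 = (\B_0)^*$ (equivalently, since we work weak-star in $\B = (W^1)^*$, a function $g \in W^1$) with $\ip{h}{g} \neq 0$ but $\ip{fQ}{g} = 0$ for all polynomials $Q$; the point of Borichev--Hedenmalm's Theorem 1.2 is that a \emph{single} such $g$ suffices to kill the whole sequential closure only if one controls uniform bounds, so really one wants a sequence of functionals $g_n \in W^1$ with $\sup_n \norm{g_n}_{W^1} < \infty$, $\ip{fQ}{g_n} = 0$ for all $Q$ and all $n$, but $\lim_n \ip{h}{g_n} \neq 0$ for a suitable $h$. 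Any weak-star \emph{sequential} limit $F$ of $f Q_k$ comes from a norm-bounded sequence $fQ_k$ (by the uniform boundedness principle, a weak-star convergent sequence is bounded), so pairing against the $g_n$'s and interchanging limits forces $\ip{F}{g_n} = 0$ for every $n$; choosing $h$ with $\ip{h}{g_n} \not\to 0$ then exhibits $h$ outside the sequential closure.

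The construction of $f$ is the heart of the matter. First I would fix a "thin" but nontrivial closed set $E \subset \T$ — say a Cantor-type set — and a Blaschke-type or weight profile so that $1/f$ is bounded (this forces $f$ to be outer with $\log|f| \in L^1$ bounded above and controlled below) while $f' $ is \emph{large} on a substantial portion of $\D$, in the spirit of the remark following \thref{THM:CycvsInv} that the obstruction is maximal growth of $|f'|$ on a massive set. Concretely one takes $\log(1/f)$ to be the harmonic extension of a singular-like but absolutely continuous density concentrated near $E$, scaled so that $(1-|z|)|f'(z)| \lesssim 1$ (Bloch) but $(1-|z|)|f'(z)| \gtrsim 1$ on a set of points accumulating to $E$ with large harmonic measure from inside. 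Then the functionals $g_n$ are manufactured from reproducing-kernel-type or Cauchy-transform-type elements of $W^1$ associated to points $z_n \to E$, adjusted (by subtracting off polynomial approximations) so that $\ip{fQ}{g_n}=0$: the natural candidate is $g_n = \overline{f}\, k_{z_n}$ where $k_{z_n}$ is a Cauchy kernel, since then $\ip{fQ}{g_n}$ becomes an integral of $|f|^2 Q \bar k_{z_n}$ which can be arranged to vanish, or more robustly one uses that $(\overline{fQ})$-pairing against $\overline{f} \cdot (\text{something in } \overline{zH^1})$ vanishes by orthogonality of $H^1$ and $\overline{zH^1}$. Checking $\sup_n \norm{g_n}_{W^1} < \infty$ is where the precise growth rate of $f$ near $E$ must be calibrated: too large and the $W^1$-norms blow up, too small and $\ip{h}{g_n} \to 0$ for every $h \in \B$, collapsing the argument.

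The main obstacle, then, is the simultaneous calibration: choosing the weight defining $f$ so that (a) $f \in \B$ and $1/f \in H^\infty$, (b) the dual elements $g_n$ built from $f$ and Cauchy kernels at $z_n \to E$ stay uniformly bounded in $W^1$ while annihilating all $fQ$, and (c) some explicit $h \in \B$ — plausibly $h \equiv 1$ will not work since $f$ is cyclic, so one needs a cleverer $h$, perhaps a function with a jump or oscillation matched to the points $z_n$ — pairs nontrivially and stably against the $g_n$. I expect the argument to parallel Borichev--Hedenmalm's \cite{borichev1995cyclicity} scheme closely, with the Bergman-space pointwise-evaluation functionals replaced throughout by the $W^1$--$\B$ Cauchy pairing, and with Beurling--Carleson-set geometry of $E$ (and the Dini-type control as in \thref{THM:wsets}) ensuring the logarithmic integrals and $W^1$ norms converge. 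Once the sequence $(g_n)$ and the witness $h$ are in hand, the duality step closing the proof is routine.
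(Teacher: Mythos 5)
There is a structural flaw in your strategy that cannot be repaired: you propose to detect that some $h\in\B$ lies outside the sequential closure by producing functionals $g_n\in W^1$ with $\ip{fQ}{g_n}=0$ for all polynomials $Q$ and $\ip{h}{g_n}\not\to 0$. But $f$ is outer with $1/f\in H^\infty$, hence cyclic in $\B$, so the weak-star closure of $\{fQ\}$ is all of $\B$; consequently the preannihilator of $\{fQ : Q \ \text{polynomial}\}$ in $W^1$ is trivial, and every such $g_n$ is forced to be $0$. Duality of this kind can never separate the sequential weak-star closure from the full weak-star closure, since both sets have the same annihilator — this is precisely why the theorem is delicate. (Your fallback candidate $g_n=\conj{f}\,k_{z_n}$ is in addition not analytic, so it does not even live in $W^1$ as defined.) You also never identify what the proper subspace containing the sequential limits actually is, nor exhibit a Bloch function outside it, and you explicitly defer the "calibration" that would constitute the actual proof.

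The paper's argument is direct and avoids duality entirely. One constructs $f\in BMOA\subset\B$ with $1/f\in H^\infty$ such that the set $E(f)=\{z:|f(z)|\geq\delta\log(1/(1-|z|))\}$ accumulates at every point of $\T$ (this is done via a measure $\nu$ for which every arc contains a dyadic subarc $J$ with $\nu(J)\geq|J|^{1-\delta}$, taking $b$ with Aleksandrov--Clark measure $\nu$ and $f=\log(e/(1-b))$). If $fQ_n$ converges weak-star, then $\sup_n\|fQ_n\|_{\B}<\infty$ by uniform boundedness, so $|f(z)Q_n(z)|\lesssim\log(e/(1-|z|))$ uniformly; on $E(f)$ this forces $|Q_n|\lesssim 1$, and since $E(f)$ clusters at every boundary point the maximum principle gives $\sup_n\|Q_n\|_{H^\infty}<\infty$. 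Extracting a pointwise limit, every sequential limit has the form $fh$ with $h\in H^\infty$; such functions have finite radial limits $m$-a.e., so they form a proper subspace of $\B$. Your instinct that the obstruction is maximal growth on a massive set is the right heuristic, but the mechanism that exploits it is the pointwise growth bound for norm-bounded sequences, not a family of annihilating functionals.
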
 

The paper is organized as follows. Section \ref{SEC:Model} is devoted to establish \thref{THM:M_zLat} and \thref{THM:Modelspaces} in the more general framework of so-called regular spaces. In Section \ref{SEC:BZSets}, we deduce \thref{THM:wsets} on the the permanence property induced by certain boundary zero sets, while Section \ref{4} contains the proofs of \thref{THM:NoMOC} and \thref{THM:Nosupp}, which illustrate that the results of Korenblum and Roberts in Bergman spaces do not extend to the Bloch space. %and \thref{Modelneg}. 
Finally, Section \ref{SEC:SuffcycB1} is devoted to establishing our results on the theme of invertibility versus cyclicity, containing \thref{THM:SuffcycB}, \thref{THM:Invertibility B},  \thref{THM:CycvsInv} and \thref{THM:Seqcyc}.

\subsection*{Acknowledgements} 
The first author was supported by the Knut \& Alice Wallenberg Foundation, grant no. 2021.0294.
The second author was supported in part by the Generalitat de Catalunya (grant 2021 SGR 00071), the Spanish Ministerio de Ciencia e Innovaci\'on (project PID2021-123151NB-I00) and the Spanish Research Agency through the Mar\'ia de Maeztu Program (CEX2020-001084-M).  The authors would like to thank Konstantin Dyakonov, Oleg Ivrii and Bartosz Malman for fruitful discussions.

%Denote by $\left[f\right]_{\B}$ the weak-star closure of analytic polynomial multiples of $f\in \B$. Since the Cauchy pre-dual of $\B$ is separable the weak-star topology on $\B$ is locally metrizable, and hence by the Banach-Alaoglu theorem, any norm-closed ball of fixed radius in $\B$ is weak-star sequentially compact. These observations in conjunction with the Krein-Smulian theorem implies that any convex subset $Y$ of $\B$ is weak-star closed if and only if it is weak-star sequentially closed. %In particular, this means that $g\in \left[ f\right]_{\B}$ if and only if there exists a sequence $\{Q_n\}_n$ of analytic polynomials such that $fQ_n$ converges to $g$ in the weak-star topology on $\B$. 

\section{Beurling-type theorems and model spaces}
\label{SEC:Model}
In this section, we shall keep a broad point of view and illustrate a close relationship between Beurling-type theorems on $M_z$-invariant subspaces generated by inner functions $\Theta$ and the ample of functions with sufficiently regular boundary values belonging the model space $K_\Theta$.

\subsection{Regular spaces}
A Banach space $X$ of analytic functions in $\D$ will be referred to as a \emph{regular space} if the following three properties hold: 
\begin{itemize}
    \item [(i)] $X \subseteq H^1$.
    \item [(ii)] The set of analytic polynomials and the set of functions analytic in a neighborhood of $\cD$ are dense in $X$.
    \item[(iii)] For any $\ell$ in the dual space $X^*$, we have $\limsup_{n\to \infty} \, \abs{\ell(z^n)}^{1/n} \leq 1.$
\end{itemize}
Assumption $(i)$ ensures that the boundary values of functions in $X$ are integrable on $\T$ and by the closed graph theorem, there exists a constant $C>0$ such that $\|f\|_1 \leq C \|f\|_X $ for any $f \in X$. Assumption $(ii)$ implies that $X$ is a separable Banach space, thus Helly's selection theorem ensures that any closed and bounded set in the norm of its Banach space dual $X^*$ is sequentially compact wrt to the weak-star topology. In conjunction with $(iii)$, we may substitute the abstract Banach space dual-pairing between $X, X^*$ with the more practical \emph{Cauchy dual} $X'$, considered via the $H^2$-pairing 
\[
\lim_{r\to 1-} \int_{\T} f(r\z) \conj{g(r\z)} dm(\z), \qquad f\in X, g\in X'.
\]
Note that condition $(i)$ also gives the containment $H^\infty \subset X'$. For our purposes, we shall equip $X'$ with the weak-star topology. Let us briefly verify that $X= W^1$ is indeed a regular space. Note that if $g \in W^1$, then  
\[
\int_{\T} |g(r \z) - g(0)| dm (\z) \leq \int_{\T} \int_0^r  |g'(s \z)| ds dm (\z) \leq  \norm{g}_{W^{1}}, \quad 0<r<1.
\]
Hence $W^1 \subset H^1$ as required. It is clear that analytic polynomials are dense in $W^1$. Finally since the Cauchy-dual relation $(W^1)' \cong \B$ holds, condition (iii) is also fulfilled as the Taylor coefficients of Bloch functions are well-known to be bounded, see \cite{pommerenkebloch}. We thus conclude that $X= W^1$ indeed is a regular space. More generally, classical examples of regular spaces are provided by the analytic Sobolev spaces $W^p$ with $p\geq 1$ and their counter-parts involving multiple derivatives.

\subsection{$M_z$-invariant subspaces in duals of Regular Spaces}

Let $X$ be a regular space and let $X'$ denote its Cauchy dual. In this subsection, we collect  several general and simple properties of $M_z$-invariant subspaces in $X'$, which will be important for our further developments. Given a function $f\in X'$, we denote by $\left[f\right]_{X'}$ the weak-star closure in $X'$ of the smallest $M_z$-invariant subspace containing $f$. We declare an element $f$ to be cyclic in $X'$ if $\left[f\right]_{X'} = X'$. Similarly, given an inner function $\Theta$, we say that $\Theta$ satisfies the \emph{permanence property} in $X'$ if 
\[
\left[\Theta \right]_{X'} \cap H^\infty \subseteq \Theta H^\infty.
\]
The permanence property was recently introduced in the papers \cite{limani2022abstract} and \cite{limani2023problem}, in more specific context than ours. We also mention that it implicitly appeared in the earlier work of J. Roberts \cite{roberts1985cyclic}, in the context of Bergman spaces. Let $X$ be a regular space, let $\Theta$ be an inner function and let $\Po$ be the set of analytic polynomials. Observe that $X\cap K_\Theta^1$ regarded as a subset of $X$ is precisely the pre-annihilator of $\Theta \Po \subset X'$, denoted by $\Theta \Po_{\perp}$. A standard argument involving the Hahn-Banach separation theorem gives the following equality of sets
\begin{equation} \label{HBw*}
(X\cap K_\Theta^1)^\perp = (\Theta \Po_{\perp})^\perp = \left[ \Theta \right]_{X'}.
\end{equation}
Since the Cauchy reproducing kernels are in $X$, weak-star convergence in $X'$ implies convergence on compact subsets of $\D$. Hence the permanence property always holds for Blaschke products. In what follows, we shall thus restrict our attention to the permanence property for singular inner functions. Recall that $H^\infty$ is the Cauchy dual of the Banach space $L^1 (dm) / H^{1}_0$, where $H^1_0$ denotes the subspace of functions in $H^1$ vanishing at the origin. Our first simple observation asserts that for regular spaces $X$, the weak-star topology in $X'$ is coarser than the weak-star topology in $H^\infty$. For the sake of future references, we phrase it below and remark that the proof readily follows from the assumption $X\subseteq H^1$ of regular spaces.

\begin{lemma} \thlabel{HooX'}

Let $X$ be a regular space. Then $H^\infty \subset X'$, and whenever $\{f_n\}_n$ converges to $f$ weak-star in $H^\infty$, $\{f_n\}_n$ converges to $f$ weak-star in $X'$.
\end{lemma}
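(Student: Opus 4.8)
The plan is to show directly that the identity map $(H^\infty, \text{weak-star}) \to (X', \text{weak-star})$ is continuous by unravelling both topologies via the Cauchy pairing. First I would recall that $H^\infty$ is the Cauchy dual of $L^1(dm)/H^1_0$, so weak-star convergence $f_n \to f$ in $H^\infty$ means precisely that $\int_\T f_n \conj{h}\, dm \to \int_\T f \conj{h}\, dm$ for every $h \in H^1$ (equivalently, for every $h\in L^1(dm)$, since the $H^1_0$-part is annihilated), and moreover $\sup_n \|f_n\|_\infty < \infty$ by the uniform boundedness principle. The containment $H^\infty \subset X'$ is immediate from property (i) of a regular space: if $f \in H^\infty$ and $g \in X \subseteq H^1$, then $\int_\T f\conj{g}\,dm$ is a well-defined bounded functional of $g$, with $|\int_\T f \conj g\, dm| \le \|f\|_\infty \|g\|_1 \le C\|f\|_\infty \|g\|_X$.

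Next I would verify the convergence statement. Fix $g \in X$. Weak-star convergence of $f_n$ to $f$ in $X'$ means $\int_\T f_n \conj g\, dm \to \int_\T f \conj g\, dm$ for every such $g$. Since $X \subseteq H^1$, the function $g$ has boundary values in $L^1(\T)$, so $\conj g \in L^1(\T)$ and we may test the weak-star convergence in $H^\infty$ against $g$ directly: by definition $\int_\T f_n \conj g\, dm \to \int_\T f \conj g\, dm$. This is exactly the desired convergence in $X'$. A small technical point worth addressing is the interpretation of the Cauchy pairing as a radial limit $\lim_{r\to 1^-}\int_\T f(r\z)\conj{g(r\z)}\,dm(\z)$ rather than a boundary integral: since $f_n, f \in H^\infty$ and $g\in H^1$, the products $f_n g, fg$ lie in $H^1$, and for $H^1$ functions the radial limit of the circle integrals agrees with the integral of the boundary values, so the two formulations coincide; alternatively one notes $\int_\T f(r\z)\conj{g(r\z)}\,dm = \int_\T f\conj{g_r}\,dm$ where $g_r(\z) = g(r\z)$ and $g_r \to g$ in $H^1$, then combines this with the uniform bound $\sup_n\|f_n\|_\infty<\infty$ to pass to the limit uniformly in $r$.

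I do not expect any serious obstacle here; the lemma is essentially a bookkeeping statement comparing two dual pairings, and the only thing that makes it go through cleanly is precisely the defining property $X \subseteq H^1$ of a regular space, which guarantees that every test functional for the $X'$ weak-star topology is already a legitimate test functional for the $H^\infty$ weak-star topology. The mildest care is needed in reconciling the radial-limit definition of the Cauchy pairing with honest boundary integrals, which is handled by the $H^1$ theory as indicated above. One could phrase the whole argument in one line once these identifications are in place: the inclusion $X \hookrightarrow L^1(dm)/H^1_0$ (via boundary values, well-defined by property (i)) is bounded, hence its adjoint $H^\infty = (L^1(dm)/H^1_0)' \to X'$ is weak-star to weak-star continuous, and this adjoint is exactly the inclusion $H^\infty \hookrightarrow X'$.
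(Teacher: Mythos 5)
Your argument is correct and is essentially the paper's own (one-sentence) proof, fully unpacked: both the containment $H^\infty \subset X'$ and the weak-star continuity come down to $X \subseteq H^1 \subseteq L^1(dm)$, so every test functional defining the weak-star topology of $X'$ is already a legitimate test functional for the weak-star topology of $H^\infty$, and your closing adjoint-map formulation is the cleanest way to say it. (The only inaccuracy is the parenthetical claim that testing against $h \in H^1$ is equivalent to testing against all of $L^1(dm)$ --- that would require the Riesz projection to be bounded on $L^1$, which it is not --- but you only use the trivial direction, so the proof stands.)
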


Our next observation allows us to substitute multiples of analytic polynomials by multiples of $H^\infty$-functions when considering (weak-star) cyclicity of a bounded analytic function in $X'$. %Such a result is not feasible for general functions in $\B$, since the multiplier algebra of $\B$ is strictly smaller than $H^\infty$, see \cite{brown1991multipliers}.

\begin{lemma}\thlabel{bddcyc}
Let $f\in H^\infty$. Then the weak-star closure in $X'$ of $fH^\infty := \{fh : h \in H^\infty \}$ equals $\left[ f \right]_{X'}$.
\end{lemma}  
\begin{proof}
It suffices to verify that $fh \in \left[f\right]_{X'}$ for any $h\in H^\infty$. Let $\{Q_n\}_n$ be a sequence of polynomials which converge weak-star in $H^\infty$ to $h$. Then since $f\in H^\infty$, $Q_nf$ converges weak-star in $H^\infty$ to $fh$. The conclusion now follows from \thref{HooX'}.
%the weak-star closure in $\B$ of $fH^\infty$ is contained in $\left[ f \right]_{\B}$.
%The trivial inclusion $f \Po \subset f H^\infty$, where $\Po$ denotes the set of analytic polynomials clearly implies the containment of $\left[f \right]_{\B}$ in the weak-star closure of $fH^\infty$ in $\B$. For the converse, it suffices to show that $fh \in \left[f\right]_{\B}$ for any $h\in H^\infty$. 
\end{proof}

Our next results illustrates a certain ordering of shift invariant subspaces generated by bounded analytic functions.

\begin{prop}[Division principle] \thlabel{cycdivprinc}
Let $f,g \in H^\infty$ be such that $f/g \in H^\infty$. Then $\left[f\right]_{X'} \subseteq \left[g \right]_{X'}$. In particular, whenever $f$ is weak-star cyclic in $X'$, then so is $g$.
\end{prop}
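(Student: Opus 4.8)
The plan is to use the characterization of $\left[\cdot\right]_{X'}$ via pre-annihilators given in \eqref{HBw*}, namely $\left[f\right]_{X'} = (X \cap K_f^1)^{\perp}$ when $f$ is inner, but more generally to work directly with the weak-star closure of $fH^\infty$ afforded by \thref{bddcyc}. So the first step is to invoke \thref{bddcyc} to reduce the claim $\left[f\right]_{X'} \subseteq \left[g\right]_{X'}$ to showing that $fH^\infty$ is contained in the weak-star closure in $X'$ of $gH^\infty$.

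The second step is the key algebraic observation: since $f/g \in H^\infty$, writing $h := f/g \in H^\infty$ we have $f = gh$, so every element $fk$ with $k \in H^\infty$ equals $g(hk)$ with $hk \in H^\infty$. Hence $fH^\infty \subseteq gH^\infty$ as literal sets, and in particular $fH^\infty$ is contained in the weak-star closure of $gH^\infty$ in $X'$. Combining with \thref{bddcyc} applied to both $f$ and $g$ yields
\[
\left[f\right]_{X'} = \overline{fH^\infty}^{\,w*} \subseteq \overline{gH^\infty}^{\,w*} = \left[g\right]_{X'},
\]
which is the first assertion.

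For the ``in particular'' statement, suppose $f$ is weak-star cyclic in $X'$, i.e. $\left[f\right]_{X'} = X'$. Then the inclusion just proved gives $X' = \left[f\right]_{X'} \subseteq \left[g\right]_{X'} \subseteq X'$, forcing $\left[g\right]_{X'} = X'$, so $g$ is weak-star cyclic in $X'$.

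I do not expect any serious obstacle here: the whole content is the containment $fH^\infty \subseteq gH^\infty$ coming from $f/g \in H^\infty$, together with \thref{bddcyc} to pass from polynomial multiples to $H^\infty$-multiples; the latter is what makes the division principle clean, since polynomial multiples of $f$ need not be polynomial multiples of $g$. The only point to be careful about is that \thref{bddcyc} is stated for arbitrary $f \in H^\infty$ (not necessarily inner or outer), so it applies verbatim to both $f$ and $g$, and no regularity beyond $X$ being a regular space is needed.
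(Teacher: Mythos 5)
Your proof is correct and follows essentially the same route as the paper: the paper also reduces the claim to the single containment $f = (f/g)\,g \in \left[g\right]_{X'}$ and invokes the argument of \thref{bddcyc} (polynomial approximation of $f/g$ weak-star in $H^\infty$, then passing to $X'$ via \thref{HooX'}). Your packaging via the set inclusion $fH^\infty \subseteq gH^\infty$ and the identity $\left[f\right]_{X'} = \overline{fH^\infty}^{\,w*}$ is just a slightly more explicit way of saying the same thing.
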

\begin{proof}
Set $h= f/g \in H^\infty$ and note that it suffices to show that $f= hg \in \left[g\right]_{X'}$. This however follows from the same argument as in \thref{bddcyc}.
\end{proof}

For singular inner functions $S_\mu$, the division principle implies that whenever $S_\mu$ is cyclic in $X'$, then so is any divisor $S_{\mu_0}$ of $S_{\mu}$, where $0\leq \mu_0 \leq \mu$. In the opposite direction, a monotonicity principle for the permanence property of singular inner functions in $X'$ holds.

\begin{prop}[Monotonicity principle] \thlabel{Ppropmonprinc} Let $\{\mu_n\}_n$ be a sequence of positive, finite, Borel, singular measures with $\mu_n \leq \mu_{n+1}$, $n \geq 1$, such that $\mu_n \to \mu$ weak-star in the space of complex finite Borel measures on $\T$. Then if for any $n \geq 1$, the associated singular inner functions $S_{\mu_n}$ satisfies the permanence property in $X'$, then so does $S_{\mu}$.
    
\end{prop}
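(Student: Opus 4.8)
The plan is to work directly on the pre-annihilator side, using the identification $\left[S_{\mu_n}\right]_{X'} = (X\cap K^1_{S_{\mu_n}})^\perp$ from \eqref{HBw*}. Dualizing, the statement that $S_\mu$ satisfies the permanence property, $\left[S_\mu\right]_{X'}\cap H^\infty \subseteq S_\mu H^\infty$, should be recast as: whenever $h\in H^\infty$ annihilates $X\cap K^1_{S_\mu}$ in the Cauchy pairing, then $h/S_\mu\in H^\infty$. So let $h\in H^\infty$ with $\ip{h}{f}=0$ for all $f\in X\cap K^1_{S_\mu}$; the goal is to produce the factorization $h=S_\mu h_0$ with $h_0\in H^\infty$.

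The key structural input is the monotonicity of the model spaces: since $\mu_n\le \mu$, we have $S_{\mu_n}$ divides $S_\mu$, hence $S_\mu H^2\subseteq S_{\mu_n}H^2$, and therefore $K^1_{S_{\mu_n}}\subseteq K^1_{S_\mu}$ for every $n$. Consequently $X\cap K^1_{S_{\mu_n}}\subseteq X\cap K^1_{S_\mu}$, so our functional $h$ also annihilates each $X\cap K^1_{S_{\mu_n}}$; by the permanence property assumed for each $n$ and the duality \eqref{HBw*}, this gives $h/S_{\mu_n}\in H^\infty$ for every $n$. Thus $h$ is divisible by $S_{\mu_n}$ for all $n$. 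First I would record that $\|h/S_{\mu_n}\|_\infty = \|h\|_\infty$ on $\T$ (the inner factor has unit modulus a.e.), so the family $\{h/S_{\mu_n}\}_n$ is uniformly bounded in $H^\infty$. Passing to a subsequence, it converges weak-star in $H^\infty$ — equivalently uniformly on compacta — to some $h_0\in H^\infty$ with $\|h_0\|_\infty\le\|h\|_\infty$.

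It remains to identify the limit: we need $h = S_\mu h_0$. On compacta, $h/S_{\mu_n}\to h_0$, so $h = \lim_n S_{\mu_n}\cdot (h/S_{\mu_n})$; the factor $h/S_{\mu_n}$ converges to $h_0$ locally uniformly, so it suffices to show $S_{\mu_n}\to S_\mu$ locally uniformly in $\D$. This follows from the hypothesis $\mu_n\to\mu$ weak-star: for each fixed $z\in\D$ the function $\zeta\mapsto (\zeta+z)/(\zeta-z)$ is continuous on $\T$, so $\int_\T \frac{\zeta+z}{\zeta-z}\,d\mu_n(\zeta)\to \int_\T\frac{\zeta+z}{\zeta-z}\,d\mu(\zeta)$, whence $S_{\mu_n}(z)\to S_\mu(z)$ pointwise, and a normal-families/uniform-boundedness argument upgrades this to locally uniform convergence. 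Therefore $h = S_\mu h_0$ with $h_0\in H^\infty$, which is exactly the permanence property for $S_\mu$.

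The main obstacle I anticipate is not any single deep step but making sure the dualization in the first paragraph is airtight: one must check carefully that "$h$ annihilates $X\cap K^1_{S_\mu}$" is genuinely equivalent to "$h\in\left[S_\mu\right]_{X'}$" for $h\in H^\infty$, which requires the identity \eqref{HBw*} together with the fact (from \thref{HooX'}) that the $H^\infty$ weak-star topology is finer than that of $X'$, so that membership of a bounded function in $\left[S_\mu\right]_{X'}$ can be detected by the Cauchy pairing against the pre-annihilator. A secondary technical point is ensuring the subsequential weak-star limit $h_0$ is independent of the subsequence — but since the locally uniform limit is forced to equal $h/S_\mu$ (a well-defined holomorphic function once we know $S_\mu \mid h$, which the argument establishes a posteriori), this is automatic and no diagonal argument over $n$ is really needed beyond extracting one convergent subsequence.
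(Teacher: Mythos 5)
Your proof is correct and follows essentially the same route as the paper's: obtain $h/S_{\mu_n}\in H^\infty$ for each $n$ from the nesting of the invariant subspaces, use the uniform bound $\norm{h/S_{\mu_n}}_\infty=\norm{h}_\infty$ to extract a normal-families limit $h_0$, and identify $h=S_\mu h_0$ via the pointwise convergence $S_{\mu_n}\to S_\mu$ coming from weak-star convergence of the measures. The only cosmetic difference is that you derive the containment $[S_\mu]_{X'}\cap H^\infty\subseteq[S_{\mu_n}]_{X'}\cap H^\infty$ directly from \eqref{HBw*} and the inclusion $K^1_{S_{\mu_n}}\subseteq K^1_{S_\mu}$, whereas the paper cites its division principle (\thref{cycdivprinc}); both are valid.
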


\begin{proof}
Note that by monotonicity each $S_{\mu_n}$ is a divisor of $S_{\mu}$ hence $K^1_{S_{\mu_n}} \subseteq K^1_{S_\mu}$ for each $n$. Since $S_{\mu_n}$ satisfies the permanence property for any $n \geq 1$, then according to Proposition \ref{cycdivprinc}, we obtain 
\[
\left[S_\mu\right]_{X'} \cap H^\infty \subseteq \left[S_{\mu_n}\right]_{X'} \cap H^\infty \subseteq S_{\mu_n}H^\infty, \qquad n \geq 1.
\]
Let $f \in \left[S_\mu\right]_{X'} \cap H^\infty$ be arbitrary and note that by the above containment, we can for any $n$ find a bounded analytic function $h_n$ such that $f= S_{\mu_n}h_n$. Now since $ \norm{h_n}_{\infty} = \norm{f}_{\infty},$ for $n \geq 1 $, 
% \[
% \sup_n \norm{h_n}_{H^\infty} = \norm{f}_{H^\infty},
% \]
we may extract a subsequence $\{h_{n_k}\}_k$ which converges pointwise in $\D$ to some $h \in H^\infty$. Since $S_{\mu_n}$ converges pointwise in $\D$ to $S_\mu$, we conclude that $f= S_{\mu}h$. Thus $S_\mu$ satisfies the permanence property in $X'$ as desired.
\end{proof}

Our final lemma asserts that $M_z$-invariant subspaces generated by bounded analytic functions are invariant under the multiplication of outer functions in $H^\infty$. In particular, outer functions in $H^\infty$ are always cyclic in $X'$. %This result was observed in the special case of of $X'= \B$ by Brown and Shields in \cite{brown1991multipliers}.

\begin{prop} \thlabel{Fcycstab}
Let $F \in H^\infty$ be an outer function and $g \in H^\infty$. Then $\left[ Fg \right]_{X'} = \left[ g \right]_{X'}$.
\end{prop}
\begin{proof}
The containment $Fg \in \left[ g\right]_{X'}$ follows from the proof of \thref{bddcyc} and gives the inclusion $\subseteq$. For the converse inclusion, we apply Theorem 7.4 in \cite{garnett} which yields a sequence of bounded analytic functions $\{F_n\}_n$ satisfying the following properties
\begin{enumerate}
    \item[(i)] $\abs{F_n(z)F(z)} \leq 1$, \qquad $z\in \D$, 
    \item[(ii)] $F_n(\zeta)F(\zeta)$ converges pointwise to $1$ for $m$-a.e $\zeta \in \T$. 
\end{enumerate}
%Then $FgF_n$ has uniformly bounded $H^\infty$-norm, thus also $X'$-norm, and $FgF_n$ converges to $g$ pointwise in $ \D$. 
Then $FgF_n$ converges weak star in $H^\infty$ to $g$. The result now follows from \thref{HooX'} and \thref{bddcyc}.
\end{proof}

\subsection{Proof of \thref{THM:Modelspaces} in the context of Regular Spaces}
Our main purpose is to establish the connection between the containment of a regular space $X$ in the model spaces $K_\Theta$ to $M_z$-invariant subspaces generated by $\Theta$ in $X'$ in the weak-star topology. However, the assumption $X \subseteq H^1$ actually makes the model spaces $K^1_\Theta$ more appropriate in this regard. We recall that the reproducing kernels of $K^1_{\Theta}$ are explicitly given by
\[
\kappa_{\Theta}(z,\lambda) := \frac{1-\conj{\Theta(\lambda)}\Theta(z)}{1-\conj{\lambda}z}, \qquad \lambda,z \in \D
\]
and their linear span forms a dense subset in $K^1_{\Theta}$. Our first observation is related to cyclicity of inner functions $\Theta$ in $X'$ and the existence of non-trivial functions in $X \cap K^1_\Theta$. We remark that similar results were known for a wide range of analytic Sobolev spaces $X$ in \cite{starinvsmooth} and more recently, it has appeared implicitly in a more general setting similar to ours in \cite{limani2022abstract}. 

\begin{prop}\thlabel{prop2.1} Let $X$ be a regular space. An inner function $\Theta$ is weak-star cyclic in $X'$ if and only if $X\cap K_\Theta^1$ contains no non-trivial function.
\end{prop}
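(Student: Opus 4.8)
The plan is to establish the equivalence through the Hahn–Banach identity \eqref{HBw*}, which already tells us that $\left[\Theta\right]_{X'} = (X\cap K_\Theta^1)^\perp$. Cyclicity of $\Theta$ in $X'$ means $\left[\Theta\right]_{X'} = X'$, which by \eqref{HBw*} is equivalent to $(X\cap K_\Theta^1)^\perp = X'$. Since $X\cap K_\Theta^1$ sits inside $X$ (a genuine Banach space), taking the pre-annihilator is the natural move: $(X\cap K_\Theta^1)^\perp = X'$ forces $X\cap K_\Theta^1$ to be contained in the pre-annihilator of all of $X'$, which is $\{0\}$ since the $H^2$-pairing between $X$ and $X'$ is non-degenerate (the Cauchy reproducing kernels lie in $X'$, so they separate points of $X\subseteq H^1$). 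This gives one direction essentially for free.

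For the converse, suppose $X\cap K_\Theta^1 = \{0\}$. Then $\Theta\Po_\perp = \{0\}$ as a subset of $X$, and a Hahn–Banach argument shows its annihilator in $X'$ is all of $X'$; by \eqref{HBw*} this is exactly $\left[\Theta\right]_{X'}$, so $\Theta$ is cyclic. First I would spell out carefully why $X \cap K_\Theta^1$ really is the pre-annihilator of $\Theta\Po$: a polynomial multiple $\Theta Q$ annihilates $f\in X$ (in the Cauchy pairing) precisely when $\int_\T \Theta(\zeta) Q(\zeta)\overline{f(\zeta)}\,dm(\zeta) = 0$ for every polynomial $Q$, i.e. when $\overline{\Theta}\,\overline{f}$ has vanishing non-negative Fourier coefficients against $\Po$ — equivalently $\Theta \overline{f} \in \overline{zH^1}$, which is the boundary-value description of $K_\Theta^1$. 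Here one uses $f\in H^1$ (guaranteed by regularity) so that the integrals make sense and $\Theta\overline{f}\in L^1$.

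The main obstacle, and the only delicate point, is ensuring the duality bookkeeping is airtight: that the weak-star closure $\left[\Theta\right]_{X'}$ really coincides with $(\Theta\Po_\perp)^\perp$ and not merely contains it, and that annihilators/pre-annihilators behave as expected when $X$ is not reflexive. This is handled by the standard bipolar theorem in the duality $(X, X')$ — for a subspace $M\subseteq X'$, the weak-star closure of $M$ equals $(M_\perp)^\perp$ — together with condition (iii) of regular spaces, which is what legitimizes replacing the abstract dual $X^*$ by the Cauchy dual $X'$ so that these polar operations are taken in the concrete $H^2$-pairing. Once \eqref{HBw*} is granted (it is stated in the excerpt), the proof reduces to the non-degeneracy of the pairing, and I would close by noting that non-degeneracy follows from the reproducing kernels of $X$ lying in $X'$, which separates points.
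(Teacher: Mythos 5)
Your proof is correct and takes essentially the same route as the paper's: both arguments come down to identifying $X\cap K^1_\Theta$ with the pre-annihilator of $\Theta\Po$ (the F.\ and M.\ Riesz computation), combined with Hahn--Banach duality and the non-degeneracy of the Cauchy pairing, whether or not this is packaged through the identity \eqref{HBw*}. One cosmetic slip: the annihilation condition yields $\Theta\overline{f}\in zH^1$, equivalently $f\in\Theta\overline{zH^1}=K^1_\Theta\cap H^1$, rather than $\Theta\overline{f}\in\overline{zH^1}$; this does not affect the argument.
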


\begin{proof}
Any $g \in X \cap K^1_\Theta$ is annihilated by polynomial multiples of $\Theta$, hence if we assume that $\Theta$ is weak-star cyclic in $X'$, then $g\equiv 0$. Conversely, if $\Theta$ is not weak-star cyclic in $X'$, then there exists a non-trivial $g\in X$ such that 
\[
\int_{\T} \conj{g(\z)}\Theta(\z) {\z}^n dm(\z) =0, \qquad n\geq 0.
\]
The F. and M. Riesz Theorem implies that there exists $ g_0 \in H^1$ with $g_0(0)=0$ such that $g= \Theta \conj{g_0}$ at almost every point of $\T$. Hence $g\in  H^1 \cap \Theta \conj{zH^1}= K_\Theta^1$.
\end{proof}

Next we shall relate the density of regular spaces in the model spaces $K^1_\Theta$ to the \emph{permanence property} of $\Theta$ in $X'$. The following result is quite similar to Theorem 1.3 in \cite{limani2022abstract}, but we shall need a more general version in our setting.

\begin{prop} \thlabel{Pprinc}
Let $X$ be a regular space and let $\Theta$ be an inner function. Then $X\cap K_\Theta^1$ is dense in $K_\Theta^1$ if and only if $\Theta$ satisfies the permanence property in $X'$.
\end{prop}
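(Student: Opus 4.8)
The statement to prove is \thref{Pprinc}: for a regular space $X$ and inner function $\Theta$, density of $X \cap K^1_\Theta$ in $K^1_\Theta$ is equivalent to the permanence property $[\Theta]_{X'} \cap H^\infty \subseteq \Theta H^\infty$ in $X'$. The natural strategy is to exploit the duality identity \eqref{HBw*}, which tells us that $[\Theta]_{X'} = (X \cap K^1_\Theta)^\perp$, taken inside the pairing between $X$ and $X'$. So the whole proof is really about translating "density of a subspace $Y := X \cap K^1_\Theta$ inside $K^1_\Theta$" into a statement about its annihilator, and then identifying which bounded functions lie in that annihilator.

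**The forward direction.** Assume $Y = X \cap K^1_\Theta$ is dense in $K^1_\Theta$ (in the $H^1$-norm, which by regularity dominates... rather, is dominated by, the $X$-norm — here the density is in the $K^1_\Theta = H^1$-type norm). Take $f \in [\Theta]_{X'} \cap H^\infty$. By \eqref{HBw*}, $f$ annihilates every $g \in Y$ in the Cauchy pairing: $\lim_{r\to 1^-}\int_\T f(r\zeta)\overline{g(r\zeta)}\,dm(\zeta) = 0$. Since $f \in H^\infty$ and $g \in H^1$, this pairing is just $\int_\T f\bar g\, dm$ and is continuous in $g$ with respect to the $H^1$-norm. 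Hence $f$ annihilates the whole closure $K^1_\Theta$. Now I invoke the description of $K^1_\Theta$ via its reproducing kernels, or more directly: the annihilator of $K^1_\Theta$ in $H^\infty$ (or in the relevant dual) is exactly $\Theta H^\infty$. Concretely, $f \perp K^1_\Theta$ means $f \perp \kappa_\Theta(\cdot,\lambda)$ for all $\lambda$, and a short computation with $\kappa_\Theta(z,\lambda) = (1-\overline{\Theta(\lambda)}\Theta(z))/(1-\bar\lambda z)$ — using that the Cauchy transform of $f\, dm$ recovers $f$ and of $\Theta f\, dm$ recovers... — shows $f - \Theta(\lambda)\widetilde{(\Theta f)}$-type relations forcing $\overline{\Theta} f \in \overline{H^1_0}$, i.e. $f/\Theta \in H^\infty$ by the F. and M. Riesz theorem combined with boundedness of $f$. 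That gives $f \in \Theta H^\infty$.

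**The reverse direction.** Assume the permanence property. Suppose, toward a contradiction, $Y$ is not dense in $K^1_\Theta$. Then there is a nonzero bounded linear functional on $K^1_\Theta$ vanishing on $Y$; since $K^1_\Theta \subseteq H^1$ and by the structure of $(H^1)^*$, such a functional is represented by some $\varphi$ (a BMOA or $H^\infty$-type object, or after a normalization, an element of $H^\infty$ — one can reduce to a bounded symbol because the reproducing kernels $\kappa_\Theta(\cdot,\lambda)$ are in $H^\infty \subset X$, so $Y$ contains enough bounded functions and the separating functional can be taken bounded). So there is $\varphi \in H^\infty$, $\varphi \notin \Theta H^\infty$, with $\varphi \perp Y = X \cap K^1_\Theta$. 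By \eqref{HBw*} again, $\varphi \in (X \cap K^1_\Theta)^\perp = [\Theta]_{X'}$, hence $\varphi \in [\Theta]_{X'} \cap H^\infty$, so by the permanence property $\varphi \in \Theta H^\infty$ — contradiction. The mild subtlety here is justifying that the separating functional may be taken to be a bounded analytic function rather than a general element of $(K^1_\Theta)^*$; this is where I'd lean on the fact that $H^\infty$ is weak-star dense in the relevant dual space (the Cauchy dual of $L^1/H^1_0$) and that the reproducing kernels spanning a dense subspace of $K^1_\Theta$ are themselves bounded, so one can approximate a general separating symbol by bounded ones while preserving the orthogonality to the (dense, kernel-spanned) subspace.

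**Main obstacle.** The routine parts are the duality bookkeeping and the kernel computation; the genuinely delicate point is matching the topology in which $Y$ is dense in $K^1_\Theta$ with the annihilator relation in \eqref{HBw*} — i.e., making sure that "pre-annihilator inside $X$" and "density inside $K^1_\Theta$ with its own norm" line up correctly, given that the $X$-norm and the $H^1$-norm on $Y$ differ. I expect this is handled by noting $K^1_\Theta$ is a closed subspace of $H^1$, using $\|\cdot\|_1 \le C\|\cdot\|_X$ from regularity, and the F. and M. Riesz theorem (exactly as in \thref{prop2.1}) to identify the pre-annihilator of $\Theta\Po$ with $X \cap K^1_\Theta$; then Hahn–Banach separation in $K^1_\Theta$ produces the bounded separating functional and one checks it extends to annihilate $\Theta\Po$ in $X'$. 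That identification of the separating functional as a genuine element of $X'$ (and in fact of $H^\infty$) is the crux.
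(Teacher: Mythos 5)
Your forward direction is essentially the paper's argument and is fine: by \eqref{HBw*} and the continuity of the pairing against a bounded $f$ in the $H^1$-norm, density lets you pass from annihilating $X\cap K^1_\Theta$ to annihilating all of $K^1_\Theta$, and the kernel computation $0=f(\lambda)-\Theta(\lambda)P_+(\conj{\Theta}f)(\lambda)$ gives $f\in\Theta H^2$, hence $f\in\Theta H^\infty$ since $|f/\Theta|=|f|$ a.e.\ on $\T$.

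The reverse direction, however, has a genuine gap at exactly the point you flag as the crux, and your proposed fix does not work. A separating functional for $X\cap K^1_\Theta$ inside $K^1_\Theta$ lives in the Cauchy dual $(K^1_\Theta)'$, which is contained in $K^2_\Theta$ --- it is an $H^2$ function, not a bounded one, and the permanence property as stated only controls $[\Theta]_{X'}\cap H^\infty$. Your reduction to a bounded symbol rests on the claim that the reproducing kernels $\kappa_\Theta(\cdot,\lambda)$ lie in $H^\infty\subset X$; but regularity gives $H^\infty\subset X'$, not $H^\infty\subset X$, and the kernels $\kappa_\Theta(\cdot,\lambda)$ involve $\Theta$ and are generally \emph{not} in $X$ (if they were, \thref{prop2.1} would rule out every cyclic inner function, contradicting \thref{THM:M_zLat}). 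Likewise, a weak-star approximation of the separating functional by bounded symbols does not preserve annihilation of the subspace, so that route also fails. What the paper does instead is first upgrade the permanence property from $H^\infty$ to $H^2$: given $f=F\Phi\in[\Theta]_{X'}\cap H^2$ with $F$ outer and $\Phi$ inner, truncate the outer part by $|F_n|=\min(n,|F|)$, use the division principle \thref{cycdivprinc} to get $[F_n\Phi]_{X'}\subseteq[\Theta]_{X'}$, and then \thref{Fcycstab} to conclude $[\Phi]_{X'}=[F_n\Phi]_{X'}$, whence $\Phi\in[\Theta]_{X'}\cap H^\infty\subseteq\Theta H^\infty$ and so $f\in\Theta H^2$. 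With the strengthened inclusion $(X\cap K^1_\Theta)^\perp\cap H^2\subseteq\Theta H^2$ in hand, any annihilator of $X\cap K^1_\Theta$ lies in $K^2_\Theta\cap\Theta H^2=\{0\}$, giving density. Without this $H^2$-upgrade step your contradiction argument cannot be closed.
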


\begin{proof} 

Suppose that $X \cap K_\Theta^1$ is dense in $K_\Theta^1$ and let $f \in \left[ \Theta \right]_{X'}\cap H^\infty$. According to \eqref{HBw*}, $f$ must necessarily annihilate the family of reproducing kernels $\{k_\Theta(\cdot, \lambda)\}_{\lambda \in \D}$ of $K_\Theta^1$, that is
\[
f(\lambda) = \Theta(\lambda) \int_{\T} \frac{\conj{\Theta(\zeta)}f(\zeta)}{1-\conj{\zeta}\lambda} dm(\zeta), \qquad \lambda \in \D.
\]
Hence $ f \in \Theta H^2$, but since $f\in H^\infty$, we get that $f\in \Theta H^\infty$ as desired. Conversely, if the permanence property holds, then in fact we verify that the following stronger condition holds:
\begin{equation}\label{propH2}
\left[ \Theta \right]_{X'} \cap H^2 \subseteq \Theta H^2.
\end{equation}
To see this, note that if $f\in \left[ \Theta \right]_{X'} \cap H^2$, then $\left[f\right]_{X'} \subseteq \left[\Theta\right]_{X'}$. Let $f= F \Phi$, where $F$ is outer and $\Phi$ inner. It suffices to show that $\Theta$ divides $\Phi$ in $H^\infty$. For $n \geq 1$, consider the outer function $F_n$ defined as $\abs{F_n}= \min \left(n,\abs{F}\right)$ on $\T$ and note that if we set $f_n := F_n \Phi$, then $f_n/f \in H^\infty$ and hence by the division principle \thref{cycdivprinc}, we get
\[
\left[f_n \right]_{X'} \subseteq \left[f\right]_{X'} \subseteq \left[\Theta\right]_{X'}, \qquad n\geq 1.
\]
However since $f_n = F_n \Phi$ is bounded for each $n\geq 1$, \thref{Fcycstab} implies $\left[f_n\right]_{X'} = \left[\Phi\right]_{X'}$. We thus conclude that $\Phi \in \left[\Theta\right]_{X'} \cap H^\infty \subseteq \Theta H^\infty$ and hence the claim is proved. With this at hand, note that \eqref{propH2} in conjunction with \eqref{HBw*} may be rephrased as
\[
(X\cap K_\Theta^1)^\perp \cap H^2 \subseteq \Theta H^2.
\]
Now since the Cauchy dual $(K_\Theta^1)'$ is contained in $K^2_\Theta$ (see \cite{dyakonov2022interpolation}), we see that any annihilator of $X \cap K_\Theta^1$ is contained in $K^2_\Theta \cap \Theta H^2 = \{0\}$. This is enough to conclude that $X\cap K_\Theta^1$ is dense in $K_\Theta^1$.
\end{proof}

\begin{proof}[Proof of \thref{THM:Modelspaces}] 
Taking $X=W^1$, part (a) follows from \thref{prop2.1}, while $(b)$ follows from \thref{Pprinc}.
\end{proof}

\subsection{An abstract decomposition for singular measures} 
Inspired from the Korenblum-Roberts Theorem on $M_z$-invariant subspaces generated by inner functions, this section is devoted to extending their result to the context of Regular Spaces. Here we shall follow ideas developed by Roberts in \cite{roberts1985cyclic}, with certain required adaptations in order to deal with our general framework.

\begin{thm}[Abstract decomposition] \thlabel{Absdec}
Let $X$ be a regular space and let $S_\mu$ be a singular inner function. Then there exists a unique decomposition of $\mu= \mu_{P} + \mu_{C}$ (up to sets of $\mu$-measure zero) into mutually singular parts $\mu_{P}, \mu_{C}$ satisfying the following properties:
\begin{itemize}
    \item[(i)] $S_{\mu_P}$ has the permanence property in $X'$, that is,  
   $\left[S_{\mu_P} \right]_{X'} \cap H^\infty \subseteq S_{\mu_P} H^\infty$,
    \item[(ii)] $S_{\mu_C}$ is weak-star cyclic in $X'$, that is, $\left[S_{\mu_C} \right]_{X'} = X'$. 
    %\item[(iii)] The following holds $\left[ BS_\mu \right]_{X'} = \left[ B S_{\mu_{\Ca}} \right]_{X'}$.
\end{itemize}

\end{thm}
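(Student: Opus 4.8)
The plan is to follow Roberts' original strategy for the Korenblum–Roberts decomposition, adapted to the axiomatic setting of regular spaces, using the Division Principle (\thref{cycdivprinc}) and the Monotonicity Principle (\thref{Ppropmonprinc}) as the two engines that make the abstract argument go through. The key point is that the class of singular measures $\nu \leq \mu$ for which $S_\nu$ has the permanence property in $X'$ is closed under the passage to suprema of increasing sequences (this is exactly \thref{Ppropmonprinc}), while the class for which $S_\nu$ is cyclic is closed under taking smaller measures (this is the Division Principle). First I would consider the family
\[
\mathcal{F} = \{ \nu : 0 \leq \nu \leq \mu, \ \nu \text{ singular}, \ S_\nu \text{ has the permanence property in } X' \},
\]
and I would want to show $\mathcal{F}$ has a maximal element $\mu_P$ in the sense that $\mu_P(\T) = \sup\{\nu(\T) : \nu \in \mathcal{F}\}$, or more precisely that there is $\mu_P \in \mathcal{F}$ with $\nu \leq \mu_P$ ($\mu$-a.e.) for every $\nu \in \mathcal{F}$. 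To produce this maximal element one takes an increasing sequence $\nu_n \in \mathcal{F}$ with $\nu_n(\T) \to \sup$, passes to $\mu_P = \sup_n \nu_n = \lim_n \nu_n$ (weak-star, by monotone convergence), and invokes \thref{Ppropmonprinc} to conclude $\mu_P \in \mathcal{F}$. A standard argument — if $\nu \in \mathcal{F}$ were not dominated by $\mu_P$, then $\max(\nu, \mu_P) = \mu_P + (\nu - \mu_P)^+$ would also lie in $\mathcal{F}$ (one needs that the lattice maximum of two permanence-property measures still has the permanence property, which again follows from \thref{Ppropmonprinc} applied to the increasing sequence $\mu_P, \max(\nu,\mu_P)$ together with the Division Principle controlling each factor) and would strictly increase the total mass — shows maximality and also uniqueness of $\mu_P$ up to $\mu$-null sets.

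Next I would set $\mu_C := \mu - \mu_P \geq 0$, which is singular and mutually singular with $\mu_P$ provided I arrange the construction so that $\mu_P$ is an honest restriction-type piece; in fact the cleaner route, following Roberts, is to define $\mu_P$ as the part of $\mu$ on a Borel set and $\mu_C$ as the part on the complement, so mutual singularity is automatic. It then remains to prove that $S_{\mu_C}$ is weak-star cyclic in $X'$. This is the heart of the matter. By \thref{prop2.1}, cyclicity of $S_{\mu_C}$ is equivalent to $X \cap K^1_{S_{\mu_C}} = \{0\}$. Suppose, for contradiction, that there is a nonzero $g \in X \cap K^1_{S_{\mu_C}}$. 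One then needs to extract from $g$ a nontrivial singular inner divisor $S_\sigma$ of $S_{\mu_C}$ (with $0 \leq \sigma \leq \mu_C$) such that $S_\sigma$ has the permanence property in $X'$; this would contradict the maximality of $\mu_P$ since $\mu_P + \sigma \in \mathcal{F}$ would have strictly larger mass. Producing such a $\sigma$ is where Roberts' construction enters: one uses the membership $g \in X \cap K^1_{S_{\mu_C}}$, together with a careful decomposition of the singular measure $\mu_C$ into pieces supported on small arcs, to manufacture a piece $\sigma$ of $\mu_C$ for which $X \cap K^1_{S_\sigma}$ is dense in $K^1_{S_\sigma}$ (equivalently, by \thref{Pprinc}, $S_\sigma$ has the permanence property). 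Concretely I expect the argument to run: if no piece of $\mu_C$ gives the permanence property, then a dichotomy forces, via a Roberts-type iteration over ever-finer arc decompositions, the conclusion that $X\cap K^1_{S_{\mu_C}}=\{0\}$ after all, contradicting the existence of $g$.

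The main obstacle will be this last step — transporting Roberts' concrete Bergman-space construction (which manipulates singular measures on dyadic arcs and tests against explicit functions) into the purely axiomatic regular-space framework, where the only tools available are $X \subseteq H^1$, density of polynomials/functions-holomorphic-near-$\cD$, the Taylor-coefficient bound (iii), and the duality $X \cap K^1_\Theta = (\Theta\Po)_\perp$ from \eqref{HBw*}. In particular one must check that the cutting-and-pasting of singular measures behaves well with respect to the abstract permanence property, which is precisely what \thref{Ppropmonprinc} and \thref{cycdivprinc} are designed to guarantee, and that the "local" permanence property obtained from a nonzero $g \in X \cap K^1_{S_{\mu_C}}$ can be upgraded and glued. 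I would organize the proof into: (1) existence of the maximal permanence-property minorant $\mu_P$ via \thref{Ppropmonprinc}; (2) the lattice/maximality lemma showing $\mathcal{F}$ is closed under finite maxima and has a top element, giving uniqueness; (3) setting $\mu_C = \mu - \mu_P$ and reducing cyclicity of $S_{\mu_C}$ to $X \cap K^1_{S_{\mu_C}} = \{0\}$ via \thref{prop2.1}; (4) the Roberts-type extraction argument showing any nonzero element of $X \cap K^1_{S_{\mu_C}}$ would produce a permanence-property piece of $\mu_C$, contradicting maximality. Step (4) is the crux and the only place genuinely new work beyond bookkeeping is required.
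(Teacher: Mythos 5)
Your overall architecture matches the paper's: build the maximal ``permanence'' piece of $\mu$ by exhausting an increasing family and closing it up with the monotonicity principle, then show the complementary piece is cyclic by contradiction via \thref{prop2.1}. But the step you yourself identify as the crux --- step (4), extracting a permanence-property piece of $\mu_C$ from a nonzero $g\in X\cap K^1_{S_{\mu_C}}$ --- is not actually carried out; you only conjecture that ``Roberts' construction'' with ``ever-finer arc decompositions'' will supply it. This leaves the proof incomplete, and the guess about the nature of the missing argument is off: no quantitative decomposition of $\mu_C$ over small arcs is needed, since the paper's argument here is entirely soft. Because $g$ annihilates $S_{\mu_C}\zeta^n$ for all $n\ge 0$, the F. and M. Riesz theorem gives $\conj{g}=h_1/S_{\mu_C}$ a.e.\ on $\T$ with $h_1\in H^1$, $h_1(0)=0$; cancelling the common singular inner factor of $h_1$ and $S_{\mu_C}$ one writes $\conj{g}=h_2/S_\nu$ with $0\le\nu\le\mu_C$ and with $h_2$ and $S_\nu$ sharing no inner factor. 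If $\nu=0$ then $g\in H^1\cap\conj{H^1}$ with the appropriate normalization, forcing $g\equiv 0$, a contradiction; hence $\nu\neq 0$. Then $g\in X\cap K^1_{S_\nu}$, and for any $u\in\left[S_\nu\right]_{X'}\cap H^\infty$ the identity \eqref{HBw*} forces $g$ to annihilate $u\zeta^n$ for all $n \geq 0$, so a further application of F. and M. Riesz gives $\conj{g}=h_3/u$, i.e.\ $h_3 S_\nu=h_2 u$; since $S_\nu$ and $h_2$ are coprime, $S_\nu$ divides $u$. Thus $S_\nu$ satisfies the permanence property, contradicting maximality of $\mu_P$. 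Some such argument must be supplied for the proof to close.

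A secondary problem: your justification that the permanence class is closed under finite lattice maxima is circular. You propose applying \thref{Ppropmonprinc} to the two-term sequence $\mu_P,\ \max(\nu,\mu_P)$, but that principle requires every term of the sequence --- in particular $\max(\nu,\mu_P)$ itself --- to already have the permanence property, which is exactly what is being proved. The correct argument (the one the paper uses, phrased for restrictions $\mu\lvert_E$, $\mu\lvert_F$ to Borel sets, which also makes mutual singularity of $\mu_P$ and $\mu_C$ automatic, as you anticipate) is: if $f\in\left[S_{\max(\nu,\mu_P)}\right]_{X'}\cap H^\infty$, then by \eqref{HBw*} and the inclusions of the corresponding $K^1$ spaces, $f$ lies in both $\left[S_\nu\right]_{X'}\cap H^\infty\subseteq S_\nu H^\infty$ and $\left[S_{\mu_P}\right]_{X'}\cap H^\infty\subseteq S_{\mu_P}H^\infty$, hence is divisible by the least common inner multiple $S_{\max(\nu,\mu_P)}$. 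Your steps (1)--(3) are otherwise sound and agree with the paper.
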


\begin{proof}
We shall divide the proof in the following two steps.
\proofpart{1}{The construction of $\mu_P$} 
Let $\mu \lvert_E$ denote the restriction of $\mu$ to the Borel set $E$. Consider the the following collection of Borel subsets of $\T$  
\[
\mathcal{M}(X',\mu):= \left\{E \subset \T: \, \text{E Borel set}, \, \, \, \mu (E) >0,  \left[ S_{\mu\lvert_E} \right]_{X'} \cap H^\infty \subseteq S_{\mu\lvert_E}H^\infty \right\}.
\]
If the set $\mathcal{M}(X',\mu)$ is empty, then we simply take $\mu_{C} = \mu$. For the being moment, we shall assume that the collection $\mathcal{M}(X',\mu)$ is non-empty, and our goal is to primarily establish that $\mathcal{M}(X',\mu)$ is closed under the formation of unions. To this end, pick two sets $E,F \in \mathcal{M}(X',\mu)$ which we may assume are not contained in one or another. Set $\nu := \mu \lvert_{E \cup F}$ and note that the containment of $K_{S_{\mu \lvert_E}}^1 \subset K_{S_\nu}^1$ in conjunction with \eqref{HBw*} implies
\[
\left[ S_{\nu} \right]_{X'} \cap H^\infty = \left( X \cap K_{S_{\nu}}^1 \right)^{\perp} \cap H^\infty \subseteq ( X \cap K_{S_{\mu \lvert_E}}^1 )^{\perp} \cap H^\infty =\left[ S_{\mu \lvert_E} \right]_{X'} \cap H^\infty \subseteq S_{\mu \lvert_E} H^\infty.
\]
Therefore, any $f \in \left[ S_{\nu} \right]_{X'} \cap H^\infty$ is divisible by $S_{\mu \lvert_E}$, and by switching the roles of $E$ and $F$, we deduce in a similar way that $f$ is also divisible by $S_{\mu \lvert_F}$. Altogether, we obtain $f/S_{\nu} \in H^\infty$ and therefore $E\cup F \in \mathcal{M}(X',\mu)$. Now consider the quantity
\[
\gamma(X',\mu) := \sup \left\{ \mu(E): \, E \in \mathcal{M}(X',\mu) \right\},
\]
and observe that since $\mathcal{M}(X',\mu)$ is closed under finite unions, we can find a sequence of Borel sets $\{E_n\}_n$ with $ E_n \in \mathcal{M}(X',\mu)$ and $E_n \subset E_{n+1}$ for any $n$, such that $\mu(E_n) \to \gamma(X',\mu)$ as $ n \to \infty$. Set $E:= \cup_n E_n$ and we claim that our candidates are given by $\mu_{P} = \mu \lvert_E$ and $\mu_{C} = \mu \lvert_{\T \setminus E}$. We first check that $E \in \mathcal{M}(X',\mu)$. To this end, let $f \in \left[ S_{\mu \lvert_E} \right]_{X'} \cap H^\infty $ and note that since each $E_n \in \mathcal{M}(X',\mu)$, an application of \eqref{HBw*} implies
%recall that a straightforward argument involving functional analysis and reproducing kernels shows that $\cup_n K_{S_{\mu \lvert_{E_n}}}$ is dense subsets of $K_{S_{\mu \lvert_E}}$ (monotonicity of the $E_n$'s is important). Using this in conjunction with  \thref{Pprinc}, we obtain
\[
\left[ S_{\mu \lvert_E} \right]_{X'} \cap H^\infty = (X \cap K_{S_{\mu \lvert_E}}^1)^\perp \cap H^\infty \subseteq (X \cap K_{S_{\mu \lvert_{E_n}}}^1)^\perp \cap H^\infty = \left[ S_{\mu \lvert_{E_n}} \right]_{X'} \cap H^\infty \subseteq S_{\mu \lvert_{E_n}} H^\infty,
\]
for all $n$. Hence for each $n$, there exists $\{h_n\}_n \subset H^\infty$ such that $f = S_{\mu \lvert_{E_n}} h_n$. But since $\norm{h_n}_{\infty}= \norm{f}_{\infty},$
we may extract a subsequence $\{h_{n_k}\}_k$ that converges pointwise in $\D$ to a certain function  $h \in H^\infty$. Now since the sequence of  sets $\{E_n\}_n$ is increasing, we actually obtain $f= S_{\mu_E}h$. This proves that $E \in \mathcal{M}(X',\mu)$ and consequently we have established claim $(i)$ with $\mu_{P} = \mu \lvert_E$. 
\proofpart{2}{Verifying cyclicity of $S_{\mu_C}$} 
We now proceed to establish claim $(ii)$ by showing that if $\mu_{C}:= \mu \lvert_{\T \setminus E}$ then $S_{\mu_{C}}$ is weak-star cyclic in $X'$. To this end, suppose that $S_{\mu_{C}}$ is not weak-star cyclic in $X'$, then there exists a non-trivial $g\in X$ such that 
\[
\int_{\T} \conj{g(\z)} S_{\mu_{C}}(\z) \z^n dm(\z) =0, \qquad n\geq 0.
\]
By the F. and M. Riesz Theorem, there exists a function $h_1 \in H^1$ with $h_1(0)=0$ such that 
\[
\conj{g(\z)} = \frac{h_1(\z)}{S_{\mu_{C}}(\z)} = \frac{h_2(\z)}{S_{\nu}(\z)}, \quad m-\text{a.e.} \, \, \,  \z \in \T.
\]
Here $h_2\in H^1$, $h_2 (0)=0$, $0 \leq \nu \leq \mu_{C}$ and the quotient $h_1/h_2$ is a singular inner function dividing $S_{\mu_{C}}$ (possibly a unimodular constant). Now if $\nu \equiv 0$, then $g \in H^1 \cap \conj{H^1}$ on $\T$ with $g(0)=0$, which implies $g\equiv 0$, thus contradicting the assumption that $g$ is non-trivial. Hence we may assume that $\nu \neq 0$, and our aim is to establish that $S_\nu$ satisfies the permanence property, which will contradict the maximality of $S_{\mu_{P}}$. To this end, note that
\[
\int_{\T}\conj{g(\z)} S_{\nu}(\z) \z^n dm(\z) = \int_{\T}h_2(\zeta) \z^{n} dm(\z) = 0, \qquad n\geq 0,
\]
hence by the F. and M. Riesz Theorem, we have $g \in K^1_{S_\nu}:= H^1 \cap S_{\nu} \conj{H^1_0}$. Now if $u \in \left[S_\nu \right]_{X'} \cap H^\infty$, then according to \eqref{HBw*} we actually get 
\[
\int_{\T}\conj{g(\z)} u(\z) \z^n dm(\z)=0, \qquad n\geq 0.
\]
Applying the F. and M. Riesz Theorem once again, we can find $h_3 \in H^1$ with $h_3(0)=0$ such that 
\[
\conj{g (\z)} = \frac{h_3 (\z)}{u (\z)} = \frac{h_2 (\z)}{S_{\nu} (\z)}, \quad m-\text{a.e.} \, \, \,  \z \in \T
\]
which implies that $u \in S_{\nu}H^\infty$. Hence $S_\nu$ satisfies the permanence property in $X'$. We have thus obtained the desired contradiction and it follows that $S_{\mu_{C}}$ is weak-star cyclic in $X'$. Obviously the same argument works in case the collection $\mathcal{M}(X',\mu)$ is void. The claim regarding uniqueness of the above decomposition of $\mu$ follows from maximality of $\mu_P$ with respect to the permanence property in $X'$. 

\end{proof}

\begin{proof}[Proof of \thref{THM:M_zLat}]
The following proof of \thref{THM:M_zLat} actually holds for any regular space $X$. Note that statement $(i)$ in \thref{Absdec} regarding the permanence property is easily shown to remain true if $S_{\mu_P}$ is multiplied by a Blaschke product. The claim on cyclicity in $(ii)$ of \thref{THM:M_zLat} follows from the cyclicity of $S_{\mu_C}$ in conjunction with \thref{Fcycstab}. This completes the proof. 

\end{proof}

As a consequence of \thref{Absdec} 
in conjunction with the results in Section 2.3, we conclude this section with the following description regular spaces in model spaces.

\begin{cor}\thlabel{strmodthm} Let $X$ be a regular space and $\Theta = B S_\mu$ be an inner function with corresponding Blaschke product $B$ and singular inner factor $S_\mu$. Consider the decomposition $\mu = \mu_P + \mu_C$ given in \thref{Absdec}. Then the following holds:
\begin{description}
    \item[\textit{Abundance of regular functions}] 
    $X\cap K^1_{B S_{\mu_P}}$ is a dense subspace of $K^1_{B S_{\mu_P}}$.
    \item[\textit{Absence of regular functions}] 
    $K^1_{S_{\mu_{C}}} \cap X = \{0\}$.
\end{description}

\end{cor}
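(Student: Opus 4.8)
The plan is to read off Corollary~\ref{strmodthm} directly from \thref{Absdec} together with the two dictionary results in Section 2.3, namely \thref{prop2.1} and \thref{Pprinc}. The only genuine work is to transfer the permanence and cyclicity statements from the purely singular inner factor $S_{\mu_P}$ (resp. $S_{\mu_C}$) to the full inner function $B S_{\mu_P}$ (resp. $S_{\mu_C}$), i.e.\ to handle the Blaschke factor $B$.

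First I would dispose of the second assertion. By \thref{Absdec}(ii), $S_{\mu_C}$ is weak-star cyclic in $X'$. Since $S_{\mu_C}$ divides $S_{\mu_C}$ trivially and, more to the point, $S_{\mu_C}/S_{\mu_C}=1\in H^\infty$, the Division Principle (\thref{cycdivprinc}) is not even needed here: cyclicity of $S_{\mu_C}$ itself is exactly the hypothesis. Now apply \thref{prop2.1} with $\Theta = S_{\mu_C}$: an inner function is weak-star cyclic in $X'$ if and only if $X\cap K^1_\Theta=\{0\}$. Hence $X\cap K^1_{S_{\mu_C}}=\{0\}$, which is the ``absence of regular functions'' claim. (Note no Blaschke product enters this half, since $\Theta_0$ in \thref{THM:M_zLat} absorbs $B$ into the permanence part, not the cyclic part.)

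For the first assertion, set $\Theta_0 := B S_{\mu_P}$. By \thref{Absdec}(i), $S_{\mu_P}$ has the permanence property in $X'$; and as observed in the proof of \thref{THM:M_zLat} in the excerpt, the permanence property is stable under multiplication by a Blaschke product, so $\Theta_0 = B S_{\mu_P}$ also satisfies $[\Theta_0]_{X'}\cap H^\infty \subseteq \Theta_0 H^\infty$. To make this stability explicit one argues as follows: if $f\in[\Theta_0]_{X'}\cap H^\infty$, then since $S_{\mu_P}$ divides $\Theta_0$ we have $K^1_{S_{\mu_P}}\subseteq K^1_{\Theta_0}$, so by \eqref{HBw*}, $f\in [\Theta_0]_{X'} = (X\cap K^1_{\Theta_0})^\perp \subseteq (X\cap K^1_{S_{\mu_P}})^\perp = [S_{\mu_P}]_{X'}$; combined with $f\in H^\infty$ and the permanence property of $S_{\mu_P}$, this gives $f\in S_{\mu_P}H^\infty$, i.e.\ $f/S_{\mu_P}\in H^\infty$. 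Since weak-star convergence in $X'$ forces uniform convergence on compact subsets of $\D$ (the Cauchy kernels lie in $X$), the zeros of $B$ are common zeros of every element of $[\Theta_0]_{X'}$, with at least the right multiplicities; hence $f/B\in H^\infty$ as well, and therefore $f/\Theta_0 = (f/S_{\mu_P})/B \in H^\infty$ once one checks the divisibilities are compatible --- this is immediate because $f$ is divisible by the inner function $S_{\mu_P}$ and by the inner function $B$, which are coprime, so $f$ is divisible by $\Theta_0 = BS_{\mu_P}$. Thus $\Theta_0$ satisfies the permanence property in $X'$. Finally, invoke \thref{Pprinc} with $\Theta = \Theta_0$: the permanence property of $\Theta_0$ in $X'$ is equivalent to $X\cap K^1_{\Theta_0}$ being dense in $K^1_{\Theta_0}$. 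This is precisely the ``abundance of regular functions'' claim, completing the proof.

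The main (and only) obstacle is the Blaschke-factor bookkeeping in the first half --- verifying that the permanence property genuinely passes from $S_{\mu_P}$ to $BS_{\mu_P}$ --- but this is routine given that common zeros of a weak-star closed invariant subspace are detected pointwise and that $B$ and $S_{\mu_P}$ have disjoint singular/zero data, so divisibility by each separately yields divisibility by the product.
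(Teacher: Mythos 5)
Your proposal is correct and follows essentially the same route as the paper, which states this corollary as an immediate consequence of \thref{Absdec} combined with \thref{prop2.1} and \thref{Pprinc}, with the Blaschke factor absorbed exactly as in the proof of \thref{THM:M_zLat} (weak-star convergence in $X'$ preserves zeros with multiplicities since the Cauchy kernels and their $\conj{\lambda}$-derivatives lie in $X$, and $B$ and $S_{\mu_P}$ are coprime inner functions). No gaps.
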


\section{The permanence property and boundary zero sets} \label{SEC:BZSets} 
This section is devoted to showing that inner factors of certain weighted BMOA spaces give rise to the permanence property in $\B$, for which \thref{THM:wsets} will be derived as a corollary.

%on certain sets of finite $w$-entropy give rise to a permanence property in $\B$. In fact, we shall prove a slightly stronger result which essentially asserts that inner factors of certain weighted BMOA spaces give rise to a permanence property in $\B$.

\subsection{Inner factors in weighted BMOA}
%\subsection{Sets of finite $w$-entropy}
%We first introduce several notions which will be useful for our further developments here. 

Let $w$ be a majorant and denote by $BMO_w(\T)$ the space of Lebesgue integrable functions $h$ on $\T$ equipped with the semi-norm 
\[
\norm{h}_{BMO_w} := \sup_{I} \frac{1}{w(|I|)}  \frac{1}{|I|} \int_{I} |h - h_I| dm  < \infty.
\]
We denote by $C_{w}(\T)$ the space of continuous functions $h$ on $\T$ equipped with the semi-norm
\[
\norm{h}_{C_w}:= \sup_{\zeta\neq \xi} \frac{\abs{h(\zeta)-h(\xi)} }{w(\abs{\zeta-\xi})} < \infty.
\]
It is straightforward to verify the containment $C_w (\T) \subseteq BMO_w(\T)$ and it turns out that these spaces share very intimate features. If the majorant $w$ satisfies the Dini-condition 
\begin{equation}\label{Dini}
\int_0^1 \frac{w(t)}{t} dt < \infty,
\end{equation}
then functions in $BMO_w(\T)$ extend continuously to $\T$ with modulus of continuity not exceeding a constant multiple of $\int_{0}^\delta \frac{w(t)}{t}dt$. Conversely, if the Dini-condition \eqref{Dini} on $w$ fails then $BMO_w(\T)$ contains discontinuous and unbounded functions. 
% Furthermore, if $w$ is \emph{Dini-regular}, that is
% \[
% \sup_{0<\delta<1} \frac{1}{w(\delta)} % \int_0^{\delta} \frac{w(t)}{t} dt < \infty,
% \]
% then any function in $BMO_w(\T)$ coincides almost everywhere with a function in $C_w(\T)$. 
See \cite{spanne1965some} for details on these matters. Analytic counter-parts of the above defined spaces will be important, namely we consider the spaces $BMOA_w:= H^2 \cap BMO_w(\T)$ and $A_{w} := H^\infty \cap C_w(\T)$, hence the containment $A_w \subseteq BMOA_w$ holds. Moving forward, we shall now restrict our attention to majorants $w$ satisfying the following condition. There exists a constant $0<\gamma <1$ such that 
\begin{equation}\label{AA}
 w(t)/t^{\gamma} \text{ is non-increasing on } [0,1].
\end{equation}
We now state our main result in this section.

\begin{thm}\thlabel{THM:PPBMOAw} 
Let $w$ be a majorant satisfying the Dini condition \eqref{Dini} and condition \eqref{AA}. Then any non-trivial singular inner factor $S_\mu$ of a function in $BMOA_w$ satisfies the permanence property in $\B$, that is, $\left[ S_\mu \right]_{\B} \cap H^\infty = S_\mu H^\infty.$ In particular, no non-trivial singular inner divisor of a function in $BMOA_w$ can be cyclic in $\B$.
    
\end{thm}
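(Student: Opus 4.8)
The plan is to combine the characterization of the permanence property in terms of density of smooth functions in the model space $K^1_{S_\mu}$ (\thref{Pprinc} applied to the regular space $X=W^1$, equivalently \thref{THM:Modelspaces}(ii)) with a direct construction of a rich supply of functions in $W^1 \cap K^1_{S_\mu}$. By \thref{Pprinc}, it suffices to show that $W^1 \cap K^1_{S_\mu}$ is dense in $K^1_{S_\mu}$; since the linear span of reproducing kernels $\kappa_{S_\mu}(\cdot,\lambda)$ is dense in $K^1_{S_\mu}$, and more generally since functions of the form $(g - S_\mu \overline{S_\mu g})$ for $g$ ranging over a dense subset of $H^1$ span $K^1_{S_\mu}$ (this is the natural ``co-analytic Toeplitz'' parametrization of the model space), it would be enough to produce, for a dense enough family of test functions, elements of $K^1_{S_\mu}$ lying in $W^1$.

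First I would reduce to a single function: by hypothesis $S_\mu$ divides some $f \in BMOA_w$, $f = S_\mu h$ with $h \in H^2$ (indeed $h \in BMOA_w$ as well, since in these Dini-type weighted spaces inner factors divide cleanly — this is the analogue of the fact that inner factors of $A_w$ functions stay in $A_w$, cf. Shirokov's theory and condition \eqref{AA}). The element $f$ itself, or rather an associated ``co-analytic'' companion, should land in $K^1_{S_\mu}$: concretely I would look at functions of the form $S_\mu \overline{z h_0}$ for suitable $h_0$, or use that $f \in BMOA_w \subset $ (something with $L^1$ derivative after multiplying by a vanishing factor). The key mechanism: if $w$ satisfies the Dini condition, then $BMOA_w \subseteq$ the disc algebra with controlled modulus of continuity $\omega(\delta) \lesssim \int_0^\delta w(t)/t\,dt$, and one wants to upgrade ``continuous boundary values with this modulus of continuity'' to ``$W^1$'' — note $W^1$ only asks $\int_\D |g'|\,dA < \infty$, which is much weaker than a Lipschitz-type bound, so a function in $A_w$ with $w$ Dini is comfortably in $W^1$ (indeed $\int_\D |g'| dA \lesssim \int_0^1 \omega(1-r)/(1-r)\,dr \cdot$const, finite under a slightly stronger-than-Dini hypothesis, and one can always arrange the needed margin using condition \eqref{AA} to replace $w$ by a smaller Dini majorant). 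So the heart is: produce \emph{enough} elements of $A_w \cap K^1_{S_\mu}$ (or $BMOA_w \cap K^1_{S_\mu}$), then invoke $A_w \subset W^1$ and the density criterion.

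To get enough elements of $BMOA_w \cap K^1_{S_\mu}$, the natural device is: whenever $f = S_\mu h \in BMOA_w$, the functions $P_- (\bar f \varphi)$ or more precisely the model-space projections of polynomials against $f$ produce members of $K^1_{S_\mu}$; but cleaner is to use that if $S_\mu \mid f$ in $BMOA_w$ then for every inner $\Theta_0 \mid S_\mu$ one gets $f/\Theta_0 \in BMOA_w$, and the ``differences'' $f/\Theta_0 - \Theta_0 \cdot(\text{stuff})$ populate $K^1_{S_\mu}$. Actually the slickest route, and the one I would pursue: show $S_\mu \overline{\zeta} \cdot \overline{h(\zeta)} \cdot S_\mu(\zeta) = \overline{\zeta h(\zeta)}$, i.e. exhibit directly that $\zeta \mapsto $ (an appropriate analytic completion) lies in $K^1_{S_\mu}$ by the F. and M. Riesz argument used repeatedly in \thref{Absdec}, and that this analytic completion inherits membership in $W^1$ from $f \in BMOA_w$. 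Then one runs the monotonicity principle (\thref{Ppropmonprinc}) or the maximality argument: the set of singular inner divisors of $BMOA_w$ functions satisfying the permanence property is closed under the relevant operations, and contains a weak-star dense family, forcing $S_\mu$ itself to satisfy it.

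\textbf{Main obstacle.} The delicate point is not the soft functional analysis (that is all in place via \thref{Pprinc}) but the hard-analysis input that \emph{inner factors of $BMOA_w$ functions remain in $BMOA_w$}, or at least in a space small enough to sit inside $W^1$ while still being dense in the relevant model space. This is where conditions \eqref{AA} (the $t^\gamma$-regularity, which prevents $w$ from being too flat near $0$ and gives the needed room to absorb an inner divisor) and the Dini condition \eqref{Dini} (which forces continuity and, with margin, $W^1$-membership) must both be used, and it presumably relies on Shirokov-type estimates for the modulus of continuity of $f/\Theta_0$ in terms of that of $f$. I expect the bulk of the work — and the reason the theorem is stated with exactly these hypotheses — to be in establishing this division-invariance with quantitative control, after which the permanence property drops out from \thref{Pprinc} and the density of smooth functions.
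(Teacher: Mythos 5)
Your overall strategy is the right one and matches the paper's: reduce via \thref{Pprinc} (with $X=W^1$) to showing that $W^1\cap K^1_{S_\mu}$ is dense in $K^1_{S_\mu}$, use the Dini condition to pass from $BMO_w(\T)$ into $W^1$, and use a Dyakonov-type multiplication/division theorem for $BMOA_w$ under condition \eqref{AA} as the hard-analysis input. You also correctly locate where the difficulty sits. But the proposal stops exactly where the proof has to be done: you never actually produce a \emph{dense} family in $W^1\cap K^1_{S_\mu}$. A single nontrivial element (a ``co-analytic companion'' of $f$) only rules out cyclicity via \thref{prop2.1}; for the permanence property you need density, and none of the candidate families you float ($P_-(\conj{f}\varphi)$, ``model-space projections of polynomials against $f$'', ``$f/\Theta_0-\Theta_0\cdot(\text{stuff})$'') is pinned down, shown to lie in $W^1$, or shown to be dense. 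The closing appeal to the monotonicity principle and a maximality argument does not substitute for this step (it is only needed later, in the proof of \thref{THM:wsets}, to pass to countable unions of sets).

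The missing idea is a Toeplitz-operator construction. The paper writes $g=GS_\mu$ with $G$ \emph{outer} in $BMOA_w$ (using the factorization property of $BMOA_w$ under \eqref{AA}), applies Dyakonov's theorem (\thref{THM:Dyakonov}) to get $\conj{G}S_\mu\in BMO_w(\T)$, multiplies by Cauchy kernels $k_\lambda$ (staying in $BMO_w(\T)$), and applies \thref{BMOwintoW11} to conclude that $P_+(\conj{G}S_\mu k_\lambda)\in W^1$; this identifies $T_{\conj{G}}\left(\kappa_{S_\mu}(\cdot,\lambda)\right)$ as an element of $W^1\cap K_{S_\mu}$, and the family over $\lambda\in\D$ is dense in $K_{S_\mu}$ (hence in $K^1_{S_\mu}$) precisely because $G$ is outer: any annihilator $f$ satisfies $Gf\in S_\mu H^2$, whence $f\in S_\mu H^2\cap K_{S_\mu}=\{0\}$. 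A secondary inaccuracy: your route to $A_w\subset W^1$ via the modulus of continuity $\int_0^\delta w(t)t^{-1}dt$ would require a double-Dini condition, and condition \eqref{AA} does not let you ``replace $w$ by a smaller Dini majorant''; the correct (single-Dini) argument is the pointwise bound $\abs{P_+(h)'(z)}\lesssim\norm{h}_{BMO_w}\,w(1-|z|)/(1-|z|)$ obtained from the Garsia-type norm in \thref{LemBMOw}, which is exactly what \thref{BMOwintoW11} establishes.
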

An important class of singular inner factors of $BMOA_w$-functions are in fact provided by singular inner factors of $A_w$-functions, which turn out to be intimately connected to sets of finite $w$-entropy due to a deep result by Shirokov in \cite{shirokov1982zero}. For the moment being, we shall primarily establish \thref{THM:PPBMOAw} and then consequently derive \thref{THM:wsets} as a corollary. In our pursuit towards proving \thref{THM:PPBMOAw}, we shall need a couple of preparatory results. The first lemma can be found in a slightly greater generality in \cite{dyakonov1998multiplicative}, (see Proposition 2.6 therein).

\begin{lemma} \thlabel{LemBMOw} Let $w$ be a majorant satisfying condition \eqref{AA}. Then the $BMO_w$ semi-norm is equivalent to the Garsia-type semi-norm, that is, 
\[
\norm{h}_{BMO_w} \asymp \sup_{z \in \D} \frac{1}{w(1-|z|)} \int_{\T} \abs{ h(\zeta) - P(h)(z)} P_z(\zeta) dm(\zeta) 
\]
where $P(h)$ denotes the Poisson extension of $h$ on $\D$ and $P_z (\zeta) = (1-|z|^2)|1 - \overline{z} \zeta|^{-2}$, $z \in \D$, $ \zeta \in \T$, is the Poisson kernel on $\D$.
\end{lemma}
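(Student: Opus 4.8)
The plan is to prove the claimed two-sided estimate by establishing each inequality separately, writing $G_w(h) := \sup_{z\in\D} w(1-|z|)^{-1}\int_\T \abs{h - P(h)(z)}\,P_z\,dm$ for the Garsia-type quantity. The easy direction, $\norm{h}_{BMO_w} \lesssim G_w(h)$, proceeds by associating to each arc $I \subset \T$ the point $z_I = (1-|I|)\zeta_I \in \D$, where $\zeta_I$ is the midpoint of $I$, so that $1 - |z_I| = |I|$ and the Poisson kernel obeys $P_{z_I}(\zeta) \gtrsim 1/|I|$ uniformly for $\zeta \in I$. Using the elementary fact that $|I|^{-1}\int_I \abs{h - h_I}\,dm \le 2\,|I|^{-1}\int_I \abs{h - c}\,dm$ for any constant $c$, applied with $c = P(h)(z_I)$, I would bound the mean oscillation over $I$ by $2\int_\T \abs{h - P(h)(z_I)}\,P_{z_I}\,dm \lesssim w(|I|)\,G_w(h)$, and then take the supremum over all arcs $I$.

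The substantial direction is $G_w(h) \lesssim \norm{h}_{BMO_w}$, and this is where condition \eqref{AA} does the real work. Fix $z \in \D$, set $\delta = 1 - |z|$ and $\zeta_0 = z/|z|$, and take the dyadic family of arcs $I_k$ centered at $\zeta_0$ with $|I_k| \asymp 2^k\delta$, so that $I_0$ is the Carleson arc at scale $\delta$. I would first record the telescoping estimate $\abs{h_{I_0} - h_{I_k}} \lesssim \sum_{j=1}^{k} w(2^j\delta)\,\norm{h}_{BMO_w}$, which follows by comparing consecutive averages on doubling arcs. Condition \eqref{AA} then yields $w(2^j\delta) \le 2^{j\gamma}w(\delta)$, whence the geometric sum is controlled by $2^{k\gamma}w(\delta)\,\norm{h}_{BMO_w}$ with $\gamma < 1$. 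Splitting $\int_\T \abs{h - P(h)(z)}\,P_z\,dm$ through the anchor $h_{I_0}$ into $\int_\T \abs{h - h_{I_0}}\,P_z\,dm + \abs{h_{I_0} - P(h)(z)}$, and noting that $\abs{h_{I_0} - P(h)(z)} \le \int_\T \abs{h - h_{I_0}}\,P_z\,dm$ since $\int_\T P_z\,dm = 1$, reduces everything to estimating a single integral.

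For that integral I would decompose $\T$ into the annular pieces $A_k = I_k \setminus I_{k-1}$, on which $P_z(\zeta) \lesssim 2^{-2k}\delta^{-1}$. Bounding $\abs{h - h_{I_0}} \le \abs{h - h_{I_k}} + \abs{h_{I_k} - h_{I_0}}$ and using both the oscillation estimate $\int_{I_k}\abs{h - h_{I_k}}\,dm \lesssim 2^k\delta\,w(2^k\delta)\,\norm{h}_{BMO_w}$ and the telescoping bound above, each annular contribution is at most $2^{-k(1-\gamma)}w(\delta)\,\norm{h}_{BMO_w}$ after invoking \eqref{AA} once more to absorb $w(2^k\delta) \le 2^{k\gamma}w(\delta)$. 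Summing the geometric series in $k$ — which converges precisely because $\gamma < 1$ — gives $\int_\T \abs{h - h_{I_0}}\,P_z\,dm \lesssim w(\delta)\,\norm{h}_{BMO_w} = w(1-|z|)\,\norm{h}_{BMO_w}$; dividing by $w(1-|z|)$ and taking the supremum over $z$ finishes the estimate.

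The main obstacle is the tail control in this second direction: the oscillation of $h$ across widely separated scales grows, and only the quantitative regularity \eqref{AA} — rather than mere subadditivity, which would give the borderline factor $2^k$ and a divergent sum — guarantees that the dyadic sums converge. The remaining ingredients are the standard comparisons between Poisson averages and arc averages, and require only that $w$ is non-decreasing and doubling.
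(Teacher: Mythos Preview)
Your argument is correct. Both directions are handled cleanly: the easy inequality via the pointwise lower bound $P_{z_I}\gtrsim |I|^{-1}$ on $I$, and the substantive inequality via the dyadic annular decomposition together with the telescoping estimate on the averages $h_{I_k}$. The key step---bounding $w(2^j\delta)\le 2^{j\gamma}w(\delta)$ so that the tail sum $\sum_k 2^{-k(1-\gamma)}$ converges---is exactly where condition \eqref{AA} enters, and you identify this correctly. One small point you leave implicit is that the sum in $k$ terminates once $2^k\delta$ exceeds $1$ (i.e.\ $I_k=\T$), but this is harmless.

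As for comparison with the paper: the paper does not actually prove this lemma. It simply records the statement and refers to Proposition~2.6 of Dyakonov's article \cite{dyakonov1998multiplicative} for the proof. Your argument is the natural weighted adaptation of the classical Garsia-norm equivalence for $BMO$, and is essentially the approach one finds in that reference; so while you are supplying a self-contained proof where the paper only gives a citation, there is no genuine methodological difference to discuss.
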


With \thref{LemBMOw} at hand, we may derive the following result on the Cauchy projection.
\begin{lemma} \thlabel{BMOwintoW11}

Let $w$ be a majorant satisfying the Dini condition \eqref{Dini} and condition \eqref{AA}. Then the Cauchy projection $P_+$ maps $BMO_w(\T)$ continuously into $W^1$.

\end{lemma}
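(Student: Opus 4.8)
The plan is to reduce the statement to a single pointwise estimate on the derivative of $g:=P_+h$ and then integrate, the Dini condition \eqref{Dini} being exactly what makes the resulting integral converge.

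First I would represent the Cauchy projection by the Szeg\H{o} integral: since $h\in BMO_w(\T)\subset L^1(\T)$, the function
\[
g(z)=P_+h(z)=\int_{\T}\frac{h(\zeta)}{1-\conj{\zeta}z}\,dm(\zeta),\qquad z\in\D,
\]
is analytic in $\D$, and differentiating under the integral sign (legitimate because $h\in L^1$ and the kernel is smooth on compact subsets of $\D$) gives $g'(z)=\int_{\T}\conj{\zeta}\,h(\zeta)(1-\conj{\zeta}z)^{-2}\,dm(\zeta)$. The key algebraic point is that $\int_{\T}\conj{\zeta}(1-\conj{\zeta}z)^{-2}\,dm(\zeta)=0$ (expand $(1-\conj{\zeta}z)^{-2}$ in powers of $\conj{\zeta}z$), so for the constant $c=P(h)(z)$ we may write
\[
g'(z)=\int_{\T}\frac{\conj{\zeta}\bigl(h(\zeta)-P(h)(z)\bigr)}{(1-\conj{\zeta}z)^{2}}\,dm(\zeta),
\]
and hence, recognising the Poisson kernel $P_z(\zeta)=(1-|z|^2)|1-\conj{z}\zeta|^{-2}$,
\[
|g'(z)|\leq \frac{1}{1-|z|^{2}}\int_{\T}\bigl|h(\zeta)-P(h)(z)\bigr|\,P_z(\zeta)\,dm(\zeta).
\]

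Next I would invoke the Garsia-type reformulation of the $BMO_w$ semi-norm in \thref{LemBMOw} — this is the only place where the regularity hypothesis \eqref{AA} is used — to bound the last integral by a constant multiple of $w(1-|z|)\norm{h}_{BMO_w}$, so that
\[
|g'(z)|\lesssim \frac{w(1-|z|)}{1-|z|}\,\norm{h}_{BMO_w},\qquad z\in\D.
\]
Integrating over $\D$ in polar coordinates and substituting $t=1-r$ yields
\[
\int_{\D}|g'(z)|\,dA(z)\lesssim \norm{h}_{BMO_w}\int_{0}^{1}\frac{w(1-r)}{1-r}\,dr=\norm{h}_{BMO_w}\int_{0}^{1}\frac{w(t)}{t}\,dt<\infty
\]
by the Dini condition \eqref{Dini}. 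The remaining term $|g(0)|=\bigl|\int_{\T}h\,dm\bigr|$ is trivially controlled, since \eqref{Dini} forces $BMO_w(\T)\subset C(\T)\subset L^1(\T)$; altogether $\norm{P_+h}_{W^1}\lesssim \norm{h}_{BMO_w}+\norm{h}_{L^1}$, which gives the asserted continuity.

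I do not expect a serious obstacle here: the argument is a clean consequence of \thref{LemBMOw}, the only mild care being the justification of differentiation under the integral, the cancellation identity that allows subtracting $P(h)(z)$, and the correct invocation of the Garsia estimate with its constant. The conceptual content is simply that $BMO_w$-control of oscillations converts, through the Garsia integral, into the weighted gradient bound $|g'(z)|\lesssim w(1-|z|)/(1-|z|)$, whose $L^1(\D)$-finiteness is precisely the Dini condition.
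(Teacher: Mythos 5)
Your argument is correct and coincides with the paper's proof: the same cancellation $\int_{\T}\conj{\zeta}(1-\conj{\zeta}z)^{-2}\,dm(\zeta)=0$ allowing the subtraction of $P(h)(z)$, the same appeal to the Garsia-type reformulation of \thref{LemBMOw} to obtain $|P_+(h)'(z)|\lesssim \norm{h}_{BMO_w}\,w(1-|z|)/(1-|z|)$, and the same integration via the Dini condition. Your extra remark controlling $|g(0)|$ is a harmless addition that the paper leaves implicit.
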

\begin{proof}
Fix an arbitrary $h \in BMO_w(\T)$ and observe that 
\[
P_+(h)'(z) = \int_{\T} \frac{\conj{\zeta} h(\zeta)}{(1-\conj{\zeta}z)^2} dm(\zeta) = \int_{\T} \frac{\conj{\zeta} \left( h(\zeta)- P(h)(z) \right)}{(1-\conj{\zeta}z)^2} dm(\zeta), \qquad z\in \D.
\]
According to \thref{LemBMOw}, we have
\[
\abs{P_+(h)'(z)} \leq  \int_{\T} \frac{\abs{h(\zeta)- P(h)(z) } }{|1-\conj{\zeta}z|^2} dm(\zeta) \lesssim \norm{h}_{BMO_w} \frac {w(1-|z|)}{1-|z|}, \qquad z\in \D.
\]
It now readily follows that 
\[
\int_{\D} \abs{P_+(h)'(z)} dA(z) \lesssim  \norm{h}_{BMO_w}  \int_{0}^1 \frac {w(t)}{t} dt,
\]
which shows that $P_+ : BMO_w(\T) \to W^1$ continuously. 
\end{proof}

The following result on division and multiplication by inner functions on weighted $BMOA$ spaces due to K. Dyakonov in \cite{dyakonov1998multiplicative} plays a crucial role in our developments.

\begin{thm}[Dyakonov] \thlabel{THM:Dyakonov} Let $w$ be a majorant satisfying condition \eqref{AA}. Let $g\in BMOA_w$ and let $\Theta$ be an inner function. Then $g \conj{\Theta}$ belongs to $BMO_w(\T)$ if and only if $g\Theta$ belongs to $BMOA_w$.
    
\end{thm}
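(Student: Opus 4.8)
The plan is to funnel both membership conditions through a single pointwise quantity and show that, under the standing hypothesis $g\in BMOA_w$, each of $g\Theta\in BMOA_w$ and $g\conj{\Theta}\in BMO_w(\T)$ is equivalent to
\[
(\star)\qquad \sup_{z\in\D}\frac{\abs{g(z)}^2\left(1-\abs{\Theta(z)}^2\right)}{w(1-\abs{z})^2}<\infty.
\]
The entire argument is run on the quadratic (Garsia-type) form of the $BMO_w$ semi-norm. Writing $\sigma_z(u)^2 := \int_\T \abs{u(\zeta)-P(u)(z)}^2 P_z(\zeta)\,dm(\zeta)$ for the Poisson variance of $u$ at $z$ (so that $\sigma_z(u)^2=\int_\T\abs{u}^2P_z\,dm-\abs{P(u)(z)}^2$, $P(u)(z)$ being the $L^2(P_z\,dm)$-mean), I would first record that $\norm{u}_{BMO_w}^2 \asymp \sup_{z}\sigma_z(u)^2/w(1-\abs{z})^2$. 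This $L^2$ version is equivalent to the $L^1$ form of \thref{LemBMOw} by a weighted John--Nirenberg self-improvement, and this is exactly where condition \eqref{AA} enters; I would cite it rather than reprove it (see \cite{spanne1965some}, \cite{dyakonov1998multiplicative}). Note $BMO_w(\T)\subset BMO(\T)\subset L^2(\T)$ since $w\le w(1)$, so all integrals below converge.

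For the analytic factor $g\Theta$, since $\abs{\Theta}=1$ on $\T$ gives $\int_\T\abs{g\Theta}^2P_z\,dm=\int_\T\abs{g}^2P_z\,dm$, and since $g\Theta\in H^2$ yields $P(g\Theta)(z)=g(z)\Theta(z)$, the identity $\int_\T\abs{g}^2P_z\,dm=\abs{g(z)}^2+\sigma_z(g)^2$ produces the exact formula
\[
\sigma_z(g\Theta)^2=\int_\T\abs{g}^2P_z\,dm-\abs{g(z)}^2\abs{\Theta(z)}^2=\sigma_z(g)^2+\abs{g(z)}^2\left(1-\abs{\Theta(z)}^2\right).
\]
Since $g\in BMOA_w$ forces $\sigma_z(g)^2\lesssim w(1-\abs{z})^2$, dividing by $w(1-\abs{z})^2$ and taking the supremum shows $g\Theta\in BMOA_w$ if and only if $(\star)$ holds.

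For the conjugate factor the point is to locate $P(g\conj{\Theta})(z)$. Writing $\phi:=g-g(z)$ (so $\int_\T\phi P_z\,dm=0$ and $\int_\T\abs{\phi}^2P_z\,dm=\sigma_z(g)^2$) and using $\int_\T\conj{\Theta}P_z\,dm=\conj{\Theta(z)}$, I get $P(g\conj{\Theta})(z)=g(z)\conj{\Theta(z)}+R$ with $R:=\int_\T\phi\,\conj{\Theta}\,P_z\,dm$. The crucial estimate is that, after subtracting the constant $\conj{\Theta(z)}$ inside $R$ (legitimate since $\int_\T\phi P_z\,dm=0$) and applying Cauchy--Schwarz with $\int_\T\abs{\Theta-\Theta(z)}^2P_z\,dm=1-\abs{\Theta(z)}^2$, one obtains the sharp bound $\abs{R}\le \sigma_z(g)\,(1-\abs{\Theta(z)}^2)^{1/2}$. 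Expanding $\sigma_z(g\conj{\Theta})^2=\int_\T\abs{g}^2P_z\,dm-\abs{P(g\conj{\Theta})(z)}^2$ then gives
\[
\sigma_z(g\conj{\Theta})^2=\abs{g(z)}^2\!\left(1-\abs{\Theta(z)}^2\right)+\sigma_z(g)^2-2\Re\!\left(g(z)\conj{\Theta(z)}\,\conj{R}\right)-\abs{R}^2.
\]
Setting $A:=\abs{g(z)}(1-\abs{\Theta(z)}^2)^{1/2}$ and $B:=\sigma_z(g)$, the bound on $\abs{R}$ (together with $\abs{\Theta(z)}\le1$) yields both $\sigma_z(g\conj{\Theta})^2\le (A+B)^2$ and $\sigma_z(g\conj{\Theta})^2\ge A^2-2AB$. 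The first inequality shows $(\star)\Rightarrow g\conj{\Theta}\in BMO_w(\T)$ using $B\lesssim w$; the second, read as a quadratic inequality $A^2-2AB-\sigma_z(g\conj{\Theta})^2\le0$ in $A$ with $B\lesssim w$ and $\sigma_z(g\conj{\Theta})\lesssim w$, forces $A\lesssim w$, i.e. $g\conj{\Theta}\in BMO_w(\T)\Rightarrow(\star)$. Combining the two factors finishes the equivalence.

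The main obstacle is the $L^2$-Garsia characterization invoked in the first paragraph: once it is available, everything downstream is elementary Hilbert-space bookkeeping with the Poisson kernel (the decisive gain being the subtraction of $\conj{\Theta(z)}$ that sharpens the estimate on $R$ by the factor $(1-\abs{\Theta(z)}^2)^{1/2}$, which is precisely the quantity appearing in $(\star)$). The passage from the defining $L^1$ mean oscillation to the $L^2$ variance is the genuine analytic input, and it is driven by \eqref{AA}; a secondary point to check is that the variance identities are valid for the boundary values of $g\in H^2\cap BMO_w(\T)$, which is ensured by $g\in L^2(\T)$.
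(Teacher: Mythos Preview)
Your argument is correct. The paper does not give its own proof of this statement: it is quoted as a result of Dyakonov from \cite{dyakonov1998multiplicative} and used as a black box. What you have written is, in fact, essentially Dyakonov's original argument---funnelling both memberships through the pointwise quantity $\abs{g(z)}^2(1-\abs{\Theta(z)}^2)/w(1-\abs{z})^2$ via the quadratic Garsia seminorm, with the exact identity $\sigma_z(g\Theta)^2=\sigma_z(g)^2+\abs{g(z)}^2(1-\abs{\Theta(z)}^2)$ on the analytic side and the Cauchy--Schwarz estimate $\abs{R}\le\sigma_z(g)(1-\abs{\Theta(z)}^2)^{1/2}$ on the conjugate side.

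Two minor remarks. First, you correctly flag the $L^1\to L^2$ self-improvement of the Garsia seminorm as the only genuine analytic input, and that condition \eqref{AA} is what drives it; this is indeed the content of Proposition~2.6 in \cite{dyakonov1998multiplicative} (and the reason \thref{LemBMOw} is recorded just before the theorem). Second, your lower bound $\sigma_z(g\conj{\Theta})^2\ge A^2-2AB$ is slightly wasteful---one actually gets $\ge A^2-2AB+\abs{\Theta(z)}^2B^2$ by using $\abs{R}^2\le B^2(1-\abs{\Theta(z)}^2)$ rather than $\abs{R}^2\le B^2$---but the cruder bound already suffices for the quadratic-in-$A$ conclusion, so nothing is lost.
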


It follows from the work in \cite{dyakonov1998multiplicative} that if $w$ satisfies condition \eqref{AA}, then $BMOA_w$ enjoys the factorization property. More specifically, whenever $g\in BMOA_w$ and $\Theta$ is an inner function with $g/\Theta \in H^\infty$, then in fact $g/\Theta \in BMOA_w$. 

\begin{proof}[Proof of \thref{THM:PPBMOAw}]
Let $\Theta := S_\mu$ be a singular inner factor of a function $g \in BMOA_w$. By means of applying the factorization property of $BMOA_w$, we may without loss of generality assume that $g= G \Theta$, where $G$ is an outer function in $BMOA_w$. Now applying \thref{THM:Dyakonov} to the function $G$, we conclude that $G\conj{\Theta}$ belongs to $BMO_w(\T)$ and thus so does $\conj{G}\Theta$. Hence if $k_\lambda$ denotes the Cauchy kernel at the point $\lambda \in \D$, then the function $g_\lambda := \conj{G}\Theta k_\lambda$ also remains in $BMO_w(\T)$. Applying \thref{BMOwintoW11}, we conclude that $P_+(g_\lambda) \in W^1$. Given $H \in L^\infty (\T,dm)$, we denote by $T_{H}$ the Toeplitz operator with symbol $H$ defined as $T_{H}(f) := P_+(Hf)$. If $\kappa_{\Theta}(\cdot,\lambda)$ denotes the reproducing kernel of the model space $K_\Theta$ at the point $\lambda \in \D$, we have that
\[
T_{\conj{G}} \left( \kappa_{\Theta}(\cdot,\lambda) \right)(z) = T_{\conj{G}}(k_\lambda)  (z)- \conj{\Theta(\lambda)} P_+(g_{\lambda})(z), \quad z \in \D, 
\]
belongs to $W^1 \cap K_\Theta$. This follows from the following properties of Toeplitz operators with co-analytic symbols:
\begin{enumerate}
    \item[(a)] $T_{\conj{H}}: W^p \to W^p$ for any $H\in H^\infty$ and $1<p<\infty$.
    \item[(b)] $T_{\conj{H}}: K_{\Phi} \to K_{\Phi}$ for any $H\in H^\infty$ and any inner function $\Phi$.
\end{enumerate}
For instance, see \cite{cauchytransform}. Property $(a)$ is actually more than what is needed here, but it certainly guarantees that $T_{\conj{f}}(k_\lambda)$ belongs to $W^1$ for each $\lambda \in \D$, while property $(b)$ shows that $T_{\conj{G}} \left( \kappa_{\Theta}(\cdot,\lambda) \right)$ belongs to $K_{\Theta}$. Consequently, we conclude that $T_{\conj{G}} \left( \kappa_{\Theta}(\cdot,\lambda) \right)$ belong to $W^1 \cap K_\Theta$. We now proceed by verifying that the linear span $\mathcal{M}$ of the set $\{T_{\conj{G}} (\kappa_{\Theta}(\cdot, \lambda) )\}_{\lambda\in \D}$ is dense in $K_{\Theta}$. To this end, let $f\in K_{\Theta}$ be an element which annihilates $\mathcal{M}$. Then 
\[
0= \int_{\T} T_{\conj{G}} (\kappa_{\Theta}(\cdot, \lambda) )(\zeta) \conj{f(\zeta)} dm(\zeta) = \int_{\T}\kappa_{\Theta}(\zeta, \lambda) \conj{G(\zeta) f(\zeta)} dm(\zeta), \qquad \lambda \in \D.
\]
Since the linear span of the reproducing kernels are dense in $K_{\Theta}$, we conclude that there exists a function $h \in H^2$ such that $Gf = \Theta h$. Now recalling that $G$ was outer, we conclude that $f \in \Theta  H^2 \cap K_{\Theta} = \{0\}$, and thus $\mathcal{M}$ is dense in $K_{\Theta}$. Now since $K_{\Theta}$ is dense in $K^1_{\Theta}$, we also conclude that $\mathcal{M}$ is dense in $K^1_{\Theta}$. Recalling that $W^1$ is the Cauchy pre-dual of $\B$, it now follows from \thref{Pprinc} that $\left[\Theta \right]_{\B} \cap H^\infty \subseteq \Theta H^\infty$, hence the proof is complete.

\end{proof}

% \subsection{Sets of finite $w$-entropy}

We now turn our attention to the proof of \thref{THM:wsets}.

\begin{proof}[Proof of \thref{THM:wsets}]
We shall primarily assume that the positive finite singular measure $\mu$ on $\T$ is supported on a single (closed) set $E$ of finite $w$-entropy. According to Shirokov's Theorem in \cite{shirokov1982zero}, there exists an outer function $f \in A_w$ such that the product $fS_{\mu}$ belongs to $A_w \subset BMOA_w$. Hence by \thref{THM:PPBMOAw} $S_\mu$ satisfies the permanence property in $\B$. Now assume that $\mu$ is concentrated on countable union of sets $\{E_n\}_n$ having finite $w$-entropy, which we may assume are increasing: $E_n \subseteq E_{n+1} $ for any $n$. Consider $\mu_n := \mu \lvert_{E_n}$ and observe that $\mu_n$ converges to $\mu$ in the weak-star topology of finite complex Borel measures on $\T$. Since each $S_{\mu_n}$ satisfies the permanence property in $\B$, so does $S_\mu$ by the monotonicity principle in \thref{Ppropmonprinc}. The proof is now complete.

%Fix $0<\gamma < \min(1, 1/\alpha)$ and note that $w^{\gamma}$ is a majorant, hence according to Corollary 2.3 in \cite{berman1987moduli} one of the following situations occur: either $\sup_{0<t<1} w^{\gamma}(t)/t < \infty$, or $\lim_{t\to 0+} w^{\gamma}(t)/t = \infty$, that is, condition $(A)$ holds. In the former case, $E$ is then a classical Beurling-Carleson set and hence it satisfies the permanence property by \thref{PPW11} in conjunction with \thref{suffW11cond}. On the other hand, if $\lim_{t\to 0+} w^{\gamma}(t)/t = \infty$, then since 
%\[
%\int_{0}^1 \frac{w^{\alpha'}(t)}{t} dt <\infty.
%\]

%Observe that $E$ is a set of finite $w$-entropy if and only if it has finite $w^{\alpha}$-entropy for any $\alpha>0$, hence by means of substituting $w$ with $w^\alpha$, we may w.log assume that
%\[
%\int_0^1 \frac{w(t)}{t} dt < \infty.
%\]
 %On the other hand, if $\lim_{t\to 0+} w(t)/t = \infty$, then since 
%\[
%\int_{0}^1 \frac{w^{\alpha'}(t)}{t} dt <\infty.
%\]
%for any $\alpha' > 1$, we may substitute $w$ with $w^{\alpha'}$ for some $\alpha' >1$ in the previous step, and thus assume that there exists a number $0<\gamma <1$ such that $w(t)/t^{\gamma}$ is non-increasing on $[0,1]$. In other words, we are now in position for which all the above lemmas and propositions in the previous subsection are applicable and thus we may freely proceed with the rest of the proof. 

\end{proof}

\section{The permanence property and $W^1$}\label{4}
\subsection{The permanence property and invisibility} 
Let $\mu$ be a positive finte Borel singular measure on $\T$. It was already mentioned in the introduction that the permanence property holds for singular inner functions in $W^1$. Our first observation in this section provides a simple way to induce the permanence property in $\B$ of singular inner functions. Let $\mu$ be a positive finite singular Borel measure on $\T$ and assume
\begin{equation}\label{positive}
  \sup \left\{ \mu(E): S_{\mu \lvert_E} \in W^1 \right\} >0.  
\end{equation}
We may pick a sequence of Borel sets $\{E_N\}_N$ with the property that each $S_{\mu\lvert_{E_N} } \in W^1$ and such that $\mu(E_N)$ converges to the supremum above. Observe that whenever $\phi, \psi$ are bounded functions in $W^1$, so is their product $\phi \psi$, and thus we may without loss of generality, assume that the $E_N \subseteq E_{N+1}$ for any $N$. Now let $E:= \cup_N E_N$ and set $\mu_0 := \mu - \mu \lvert_E$ and observe that by maximality, there exists no non-trivial $\nu_0 \leq \mu_0$ with $S_{\nu_0} \in W^1$. Measures satisfying the above condition are declared to be $W^1$ invisible. As previously observed in the introduction, each $S_{\mu_{E_N}}$ satisfies the permanence property in $\B$  and thus according to the monotonicity principle in \thref{Ppropmonprinc}, so does $S_{\mu \lvert_E}$. %We say that a positive singular measure $\mu$ on $\T$ is said to be $ W^1$ invisible if there is no singular Borel measure $0<\mu_0 \leq \mu$ for which $S_{\mu_0} \in W^1$. 
Our discussion can thus be summarized in the following result.
%We now phrase the permanence property for the singular measure $\mu \lvert_E= \mu - \mu_0$. 

\begin{prop} \thlabel{PPW11}
Let $\mu$ be a positive finite singular Borel measure on $\T$ and $\mu \lvert_E$ denote the corresponding piece in the decomposition of $\mu$ appearing in the previous paragraph. Then $S_{\mu \lvert_E}$ satisfies the permanence property in $\B$, that is, $\left[ S_{\mu \lvert_E} \right]_{\B} \cap H^\infty \subseteq S_{\mu \lvert_E } H^\infty.$
As a consequence, if $S_\mu$ is cyclic in $\B$, then $\mu$ is $ W^1$ invisible.
\end{prop}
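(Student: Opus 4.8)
The plan is to take the decomposition $\mu = \mu\lvert_E + \mu_0$ described in the paragraph preceding the statement, where $E = \bigcup_N E_N$ with each $S_{\mu\lvert_{E_N}} \in W^1$ and $\mu(E_N) \uparrow \sup\{\mu(F) : S_{\mu\lvert_F} \in W^1\}$, and to verify the two assertions in order. For the first assertion, I would recall the observation made in the introduction (just before the statement of \thref{THM:NoMOC}): if $S_\nu \in W^1$, then the reproducing kernels of $K_{S_\nu}$ lie in $W^1$, and since they span a dense subspace of $K^1_{S_\nu}$, part (ii) of \thref{THM:Modelspaces} (equivalently \thref{Pprinc} with $X = W^1$) gives that $S_\nu$ satisfies the permanence property in $\B$. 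Applying this to each $\nu = \mu\lvert_{E_N}$, every $S_{\mu\lvert_{E_N}}$ has the permanence property in $\B$. Since the $E_N$ are increasing, $\mu\lvert_{E_N} \leq \mu\lvert_{E_{N+1}}$ and $\mu\lvert_{E_N} \to \mu\lvert_E$ weak-star in the space of finite complex Borel measures on $\T$; hence the monotonicity principle \thref{Ppropmonprinc} (applied with $X = W^1$, so $X' = \B$) yields that $S_{\mu\lvert_E}$ satisfies the permanence property in $\B$, which is exactly $\left[ S_{\mu\lvert_E}\right]_{\B} \cap H^\infty \subseteq S_{\mu\lvert_E} H^\infty$.

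For the consequence, suppose $S_\mu$ is cyclic in $\B$. By the division principle \thref{cycdivprinc} (with $X = W^1$), since $S_{\mu\lvert_E}$ divides $S_\mu$ in $H^\infty$, cyclicity of $S_\mu$ forces cyclicity of $S_{\mu\lvert_E}$ in $\B$. But we have just shown $S_{\mu\lvert_E}$ has the permanence property, so $\left[ S_{\mu\lvert_E}\right]_{\B} \cap H^\infty \subseteq S_{\mu\lvert_E} H^\infty$, which contains $1$ only if $S_{\mu\lvert_E}$ is a unimodular constant, i.e. $\mu\lvert_E = 0$. Therefore $\mu = \mu_0$, and by the maximality in the definition of $E$ there is no non-trivial $\nu_0 \leq \mu$ with $S_{\nu_0} \in W^1$; that is, $\mu$ is $W^1$ invisible.

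The steps here are essentially bookkeeping on top of results already established in the excerpt, so there is no serious obstacle; the only point requiring a little care is the reduction justifying that $E$ is well-defined, namely that the family $\{F : S_{\mu\lvert_F} \in W^1\}$ is closed under finite unions. This is handled in the preceding paragraph by the remark that the product of two bounded functions in $W^1$ stays in $W^1$: if $S_{\mu\lvert_{F_1}}, S_{\mu\lvert_{F_2}} \in W^1$ then $S_{\mu\lvert_{F_1 \cup F_2}}$ divides $S_{\mu\lvert_{F_1}} S_{\mu\lvert_{F_2}} \in W^1$, and one checks a singular inner divisor of a bounded $W^1$ function is again in $W^1$ (again via \thref{THM:Dyakonov}-type factorization, or directly). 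Granting that, the supremum is attained along an increasing sequence and the argument above closes.
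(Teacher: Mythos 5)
Your proposal is correct and follows the paper's own route essentially verbatim: permanence for each $S_{\mu\lvert_{E_N}}$ via the observation that $S_\nu \in W^1$ forces the reproducing kernels of $K^1_{S_\nu}$ into $W^1$ (hence \thref{Pprinc} applies), then \thref{Ppropmonprinc} to pass to $S_{\mu\lvert_E}$, and \thref{cycdivprinc} for the cyclicity consequence. The one delicate point you flag --- that making the $E_N$ increasing requires knowing a singular inner divisor of a bounded $W^1$ function stays in $W^1$ --- is treated at exactly the same level of detail in the paper itself, so nothing is missing relative to its argument.
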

%The proof is an immediate follows from the above discussion in conjunctions with the monotonicity principle in \thref{Ppropmonprinc}.
%\begin{proof}
%Since each $S_{\mu \lvert_{E_N}}$ belongs to $W^1$, so does the collection of reproducing kernels of $K_{S_{\mu \lvert_{E_N}}}$, which has a dense linear span in $K_{S_{\mu \lvert_{E_N}}}^1$. An application of \thref{Pprinc} with $X=W^1$ implies that each $S_{\mu \lvert_{E_N}}$ satisfies the permanence property. Now since $\mu \lvert_{E_N}$ increase up to $\mu_E$ by construction, the permanence property of $S_{\mu \lvert_E}$ immediately follows from the monotonicity principle in \thref{Ppropmonprinc}. 
 
%\end{proof}

%As a consequence, we now obtain yet another necessary condition for cyclicity of singular inner functions in $\B$. A positive singular measure $\mu$ on $\T$ is said to be $ W^1$ invisible if there is no singular Borel measure $0<\mu_0 \leq \mu$ for which $S_{\mu_0} \in W^1$.
%\begin{cor} 
%If $S_\mu$ is cyclic in $\B$, then $\mu$ is $ W^1$ invisible.
%\end{cor}

\subsection{Singular inner functions in  $W^1$}

This section is devoted to study the membership of singular inner functions in $W^1$. For any arc $I \subseteq \T$, we denote its associated Carleson square by 
$
Q_I = \left\{ z\in \D: z/|z| \in I, \, \, 1-|z| \leq |I| \right\},
$
and we let $T_I = \{z \in Q_I : 1-|z| \geq |I|/ 2 \}$ be the top-half of $Q_I$. 

\begin{lemma}\thlabel{QnotELemma} There exists a universal constant $C>0$, such that for any singular measure $\mu$ on $\T$ and any arc $I\subset \T$ with $\mu (I) =0 $, the associated singular inner function $S_\mu$ satisfies
\[
\int_{Q_I} \abs{S_{\mu}'(z)} dA(z) \leq C |I|.
\]
\end{lemma}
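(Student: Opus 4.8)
The plan is to estimate $|S_\mu'(z)|$ pointwise inside $Q_I$ and integrate. Recall that $S_\mu'(z) = -S_\mu(z) \cdot 2\int_{\T} \frac{\zeta}{(\zeta-z)^2}\,d\mu(\zeta)$, so $|S_\mu'(z)| \le |S_\mu(z)| \cdot 2\int_{\T}\frac{d\mu(\zeta)}{|\zeta-z|^2}$. The key structural fact to exploit is that $\mu(I) = 0$: writing $I_0 = I$ and letting $I_k$ denote the two arcs of length $2^k|I|$ adjacent to $I$ on either side (the standard dyadic "Whitney" decomposition of $\T \setminus I$ relative to $I$), the mass $\int \frac{d\mu(\zeta)}{|\zeta-z|^2}$ for $z \in Q_I$ only sees $\mu$ on $\T \setminus I$, and for $z\in Q_I$ with $\zeta$ in the $k$-th annulus one has $|\zeta - z| \gtrsim 2^k|I|$.

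The first step is the pointwise bound: for $z \in Q_I$,
\[
\int_{\T}\frac{d\mu(\zeta)}{|\zeta-z|^2} \lesssim \sum_{k\ge 0}\frac{\mu(I_k)}{(2^k|I|)^2} \lesssim \frac{1}{|I|}\sup_{k\ge 0}\frac{\mu(I_k)}{2^k|I|} \cdot \sum_{k\ge 0} 2^{-k}\cdot(\text{something summable}),
\]
but this is too crude since $\mu(I_k)/(2^k|I|)$ need not be bounded. Instead I would combine it with the lower bound on $|S_\mu(z)|$: since $\log(1/|S_\mu(z)|) = \int P_z(\zeta)\,d\mu(\zeta) \gtrsim \sum_k \frac{(1-|z|)}{(2^k|I|)^2}\,\mu(I_k)\cdot(2^k|I|)$... more carefully, for $z \in T_{I_k}$-type regions the Poisson kernel gives $\log(1/|S_\mu(z)|) \gtrsim \mu(J)/|J|$ for the relevant arc $J$ around the projection of $z$. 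The mechanism is the elementary inequality $t^2 e^{-t} \le C$: wherever the density of $\mu$ is large, $|S_\mu|$ is exponentially small and kills the quadratic blow-up of the kernel.

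The cleanest route, and the one I would pursue, is a Whitney/stopping-time decomposition of $Q_I$: decompose $Q_I$ into top-halves $T_J$ of dyadic subarcs $J \subseteq I$ (together with the pieces near $\partial Q_I$ influenced by the adjacent arcs $I_k$). On each Whitney box $T_J$ one has $1-|z| \asymp |J|$, and $|S_\mu'(z)| \lesssim |S_\mu(z)|\,\big(\mu(\widetilde J)/|J|^2\big)$ where $\widetilde J$ is a fixed dilate of $J$, while $|S_\mu(z)| \lesssim \exp(-c\,\mu(\widehat J)/|J|)$ for a slightly smaller sub-arc. Since $\mathrm{Area}(T_J) \asymp |J|^2$, integrating over $T_J$ gives a contribution $\lesssim \mu(\widetilde J)\exp(-c\mu(\widehat J)/|J|)$; bounding $\mu(\widetilde J) \le |J| \cdot \big(\mu(\widetilde J)/|J|\big)$ and using $s e^{-cs} \le C$ (with $s = \mu(\widetilde J)/|J|$, after absorbing the mismatch between $\widetilde J$ and $\widehat J$ using a doubling/overlap argument or by working with the Garsia-type Poisson average directly), each box contributes $\lesssim |J|$. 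Finally summing $\sum_{J} |J|$ over all Whitney boxes $J \subseteq I$ at each dyadic generation gives $\sum_{n\ge 0}\sum_{|J| = 2^{-n}|I|, J\subseteq I} |J| = \sum_{n \ge 0} |I| \cdot$ — wait, that diverges, so the summation must instead be arranged so that at generation $n$ the total length is $\lesssim |I|$ but one gains an extra geometric factor from the exponential; concretely, one splits into boxes where $\mu(\widetilde J)/|J|$ is "small" (say $\le 1$), which contribute at most $|J|$ but now weighted by the fact that only boundedly many generations can have uniformly small density before the total mass $\mu(\widetilde J)$ forces...

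Let me restate the summation honestly, since this is the main obstacle: the naive sum $\sum_J |J|$ over all dyadic $J \subseteq I$ is infinite, so the exponential gain $\exp(-c\mu(\widehat J)/|J|)$ must be genuinely used. The right bookkeeping is: for each dyadic $J \subseteq I$ of generation $n$ (so $|J| = 2^{-n}|I|$, and there are $2^n$ of them), the box $T_J$ contributes $\lesssim \mu(\widetilde J)\,e^{-c\mu(\widehat J)/|J|}$. Summing over the $2^n$ boxes at generation $n$: if all densities $\mu(\widetilde J)/|J|$ at that generation were $\le A$, the sum is $\lesssim 2^n \cdot A|J| \cdot e^{-cA'} = |I| A e^{-cA'}$; summing a suitable geometric-type series in $A$ over dyadic ranges of the density and over $n$, using that $\sum_n (\text{total mass at generation } n \text{ with density in } [2^j, 2^{j+1}]) \le \mu(\T) < \infty$ controls how often large densities can occur — this is exactly Roberts-type bookkeeping. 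I expect the honest argument to reduce to: $\int_{Q_I}|S_\mu'| \lesssim \sum_{J \subseteq I \text{ dyadic}} \mu(\widetilde J)\,e^{-c\mu(\widehat J)/|J|}$, and then to bound the latter by $C|I|$ using the overlap bound $\sum_{J \subseteq I, |J| = 2^{-n}|I|} \mu(\widetilde J) \lesssim \mu(\widetilde I) \le \mu(\T)$ at each scale combined with $x e^{-cx} \le C$ applied scale-by-scale, where the scales with small density contribute a convergent geometric series summing to $O(|I|)$ and the scales with large density are suppressed exponentially. The main obstacle is making the transition from $\widetilde J$ to $\widehat J$ (the "fattening" of the arc in the kernel estimate versus the "thinning" in the Poisson lower bound) rigorous without losing the exponential — this is handled by a standard argument comparing $\mu$ on concentric arcs and using that $z e^{-cz}$ is bounded, possibly after replacing the dyadic grid by finitely many shifted copies.
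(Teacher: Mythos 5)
There is a genuine gap, and it sits exactly where you flagged your own discomfort. Your per-box estimate $\int_{T_J}|S_\mu'|\,dA\lesssim \mu(\widetilde J)\,e^{-c\mu(\widehat J)/|J|}$ is false: since $\mu(I)=0$, for every dyadic $J\subseteq I$ lying deep inside $I$ (so that $\widetilde J\subseteq I$) the right-hand side vanishes, while $S_\mu'$ never vanishes on $T_J$ (take $\mu=\delta_1$ and $I$ an arc away from $1$ to see this concretely). The error is in replacing $\int_{\T}|\zeta-z|^{-2}d\mu(\zeta)$ by the near-field quantity $\mu(\widetilde J)/|J|^2$: under the hypothesis $\mu(I)=0$ the \emph{entire} kernel integral is far-field, so the correct bound is $|S_\mu'(z)|\le 2(1-|z|^2)^{-1}P(\mu)(z)e^{-P(\mu)(z)}$ with $P(\mu)(z)$ governed by mass at distance $\gtrsim\mathrm{dist}(z/|z|,\partial I)$. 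For the same reason your proposed rescue of the divergent sum $\sum_J|J|$ via Roberts-type bookkeeping cannot work: the exponential suppression you want to exploit is driven by the local density $\mu(\widehat J)/|J|$, which is identically zero for most of the boxes, so there is nothing to suppress and nothing to sum.

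The mechanism you are missing --- and the one the paper uses --- is that $\mu(I)=0$ forces $P(\mu)$ to decay \emph{linearly in the height} $1-|z|$ over the interior of $I$. The paper takes the Whitney decomposition $\{I_k\}_{k\in\mathbb Z}$ of $I$ relative to its two endpoints (only two arcs per dyadic scale, so $\sum_k|I_k|=|I|$ converges, unlike your $2^n$ boxes per scale), disposes of $Q_I\setminus\bigcup_kQ_{I_k}$ by the trivial bound $|S_\mu'(z)|\lesssim(1-|z|)^{-1}$, and on each full Carleson box $Q_{I_k}$ uses that all the mass sits at distance $\gtrsim|I_k|$ from $Q_{I_k}$ to get $P(\mu)(z)\asymp(1-|z|)\,P(\mu)(z_k)/|I_k|$ for $z\in Q_{I_k}$. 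Then
\[
\int_{Q_{I_k}}|S_\mu'|\,dA\lesssim \frac{P(\mu)(z_k)}{|I_k|}\int_{Q_{I_k}}\exp\Bigl(-C(1-|z|)\frac{P(\mu)(z_k)}{|I_k|}\Bigr)dA(z)\lesssim |I_k|,
\]
by the identity $\int_0^\infty\lambda e^{-\lambda t}\,dt=1$ (applied with $\lambda=P(\mu)(z_k)/|I_k|$, uniformly in $\lambda$), and summing over $k$ gives $C|I|$. Your general heuristics ($t e^{-t}\le C$, Poisson lower bounds) never invoke $\mu(I)=0$ beyond the kernel upper bound, and indeed the conclusion fails without that hypothesis ($\int_{Q_{\T}}|S_\mu'|\,dA$ is the $W^1$ seminorm, which is infinite for generic $\mu$); the hypothesis must enter through the vanishing of $P(\mu)$ on the interior of $I$, which your scheme does not capture.
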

\begin{proof}
Let $\{I_k\}_{k \in \mathbb{Z}}$ denote the Whitney decomposition of $I$ satisfying 
\[
\text{dist}(I_{k}, \partial I) = |I_{k}| = \frac{1}{3\cdot 2^{|k|}} |I|, \qquad  k \in \mathbb{Z}.
\]
We first observe that there exists an absolute constant $C>0$ such that   
\begin{equation}\label{topparts}
\int_{Q_I \setminus \bigcup_k Q_{I_k}} |S'_{\mu} (z)| dA(z) \leq \int_{Q_I \setminus \bigcup_k Q_{I_k}} \frac{dA(z)}{1-|z|^2} \leq C |I|.
\end{equation}
% To this end, consider the collection of rectangles $\{R_k\}_k$ defined by \[
% R_0 := \{z \in Q_I: z/|z| \in I_0, \, |I_0| < 1-|z| \leq |I| \}
% \]
% and 
% \[
% R_k := \left\{z \in Q_I: z/|z| \in I_k, \qquad |I_{|k|}| < 1-|z| \leq |I_{|k|-1}| \right\}, \qquad k\neq 0.
% \]
% Notice that the collection $\{R_k\}_k$ is essentially a partition of $Q_I \setminus \bigcup_k Q_{I_k}$ (disregarding the overlap of the sides of two neighbouring rectangles, which has Lebesgue area measure zero). Now it is straightforward to check that for any $k$ we have
% \[
% \int_{R_k} \frac{dA(z)}{1-|z|} \lesssim |I_{k}|.
% \]
% Summing up over the disjoint Whitney subarcs $\{I_k\}_k$ of $J$ and using the trivial fact that $S_\mu \in \B$, we arrive at \eqref{topparts}. 
It remains to estimate the integral of $|S'_\mu |$ on $\cup_k Q_{I_k}$. To this end, for each Whitney arc $I_k$, we denote by $\xi_k$ its center and let $z_k = (1-|I_k|)\xi_k$. Note that there exists an absolute constant $c>0$ such that $c^{-1} |\zeta - z_k| \leq |\zeta - z| \leq c |\zeta - z_k|$ for any $z \in Q_{I_k}$ and any $ \zeta \in \T \setminus I$.  
%we have that $\textbf{dist}(z,\partial I) \asymp m(I_k)$. Note that for any $\zeta \T \setminus I$ we have that $|\zeta - z| \asymp |\zeta - z_k|$. 
Since $\mu(I)=0$ the Poisson extension $P(\mu)$ of $\mu$ satisfies
\[
P(\mu)(z) = \frac{1-|z|^2}{1-|z_k|^2} \int_{\T \setminus I } \frac{1-|z_k|^2}{|\zeta - z|^2} d\mu(\zeta) \asymp  (1-|z|) \frac{P(\mu)(z_k)}{|I_k|}, \qquad z \in Q_{I_k}.
\]
Now using this in conjunction with the obvious estimate 
\[
(1-|z|^2)|S'_\mu(z)| \leq 2 P(\mu)(z) \exp (-P(\mu)(z)), \qquad z \in \D , 
\]
it follows that there exists a universal constant $C>0$ such that 
\[
\int_{Q_{I_k}} |S'_\mu(z)| dA(z) \lesssim \frac{P(\mu)(z_k)}{|I_k|} \int_{Q_{I_k}} \exp \left( - C (1-|z|) \frac{P(\mu)(z_k)}{|I_k|}\right) dA(z).
\]
A straightforward computation of the integral above gives
% \[
% \int_{Q_{I_k}} \exp \left( -(1-|z|)\alpha_k \right) dA(z) % \asymp   |I_k| \int_{0}^{|I_k|} \exp(-r \alpha_k ) d r \leq
% \frac{|I_k|}{\alpha_k} = \frac{|I_k|^2 }{P(\mu)(z_k)}.
% \]
%This computation shows that on each Carleson square $Q_{I_k}$, we have the estimate
\[
\int_{Q_{I_k}} |S'_\mu (z)| dA(z) \lesssim |I_k|.
\]
Summing over $k$ completes the proof.
\end{proof}
We shall now locate the critical domain of integration in order for $S_\mu$ to belong to $W^1$, which will be convenient for our further developments. 
 %Given a closed set $E \subset \T$, let $\{I_k\}_k$ denote the connected components of $\T \setminus E$ and consider the region 
% \[
%\Omega_E := \D \setminus \cup_k Q_{I_k}.
%\]
% From the above lemma, we readily deduce that for a positive finite singular Borel measure $\mu$ supported on a closed set $E \subset \T$, the critical domain of integration for $S_\mu$ to belong to $W^1$ is the region $\Omega_E$. Below, we summarize these observations and provide yet another region related to dyadic arcs, which will be more convenient for our developments. 

\begin{cor} \thlabel{eqcondW11}
Let $\mu$ be a positive singular Borel measure supported on a closed set $E\subset \T$. Then $S_\mu$ belongs to $W^1$ if and only if 
\[
\sum_{\substack{I \in \Dy \\ I \cap E \neq \emptyset }} \int_{T_I} \abs{S'_\mu (z)} dA(z) < \infty,
\]
where $\Dy$ denotes the collection of dyadic arcs on $\T$.
\end{cor}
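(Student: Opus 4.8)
The plan is to show the two sums

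\[
\Sigma_1 := \sum_{\substack{I \in \Dy \\ I \cap E \neq \emptyset }} \int_{T_I} \abs{S'_\mu (z)} dA(z),
\qquad
\Sigma_2 := \int_{\D} \abs{S'_\mu(z)} dA(z)
\]

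are comparable up to a universal constant, with the trivial direction being $\Sigma_1 \leq \Sigma_2$ (the top-halves $T_I$ of dyadic Carleson squares are pairwise disjoint and contained in $\D$, so summing their contributions is dominated by the full integral). The content is therefore in the reverse inequality $\Sigma_2 \lesssim \Sigma_1 + (\text{finite error})$, and the finite error must actually be controlled by $\Sigma_1$ or be absolutely bounded; since $\|S'_\mu\|_{W^1}$ also contains the harmless term $|S_\mu(0)|\le 1$, it suffices to bound $\int_\D |S'_\mu|\,dA$ by $C(\Sigma_1 + 1)$, or even just to show $\Sigma_1<\infty \iff \Sigma_2<\infty$.

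First I would set up a Whitney-type decomposition of $\D$ into the dyadic top-halves $\{T_I : I \in \Dy\}$, which tile $\D$. Split the index set into the "good" arcs $I$ with $I \cap E \ne \emptyset$ (these contribute exactly $\Sigma_1$) and the "bad" arcs with $I \cap E = \emptyset$. For a maximal bad dyadic arc $I_0$ — one disjoint from $E$ whose dyadic parent meets $E$ — the union $\bigcup \{T_I : I \subseteq I_0\}$ is exactly the Carleson box $Q_{I_0}$, and since $\mu(I_0)=0$, \thref{QnotELemma} gives $\int_{Q_{I_0}} |S'_\mu|\,dA \le C|I_0|$. Now I would sum over all maximal bad arcs $I_0$: each such $I_0$ has a dyadic parent $\widehat{I_0}$ meeting $E$, and the parents have bounded overlap (each dyadic arc has exactly two children), so $\sum_{I_0 \text{ max. bad}} |I_0| \lesssim \sum_{\widehat{I_0}} |\widehat{I_0}| \lesssim \sum_{J \in \Dy, J \cap E \ne \emptyset} |J|$. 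But this last sum can be infinite in general, so I cannot close the estimate this crudely; I need the contribution of the bad boxes hanging off $I_0$ to be controlled not by $|I_0|$ but by the $\Sigma_1$-contribution of $I_0$'s ancestors.

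The right bookkeeping: reindex the bad top-halves by the maximal bad arc $I_0$ containing them, and bound $\int_{Q_{I_0}}|S'_\mu|\,dA \le C|I_0|$ by \thref{QnotELemma}. It remains to absorb $\sum_{I_0 \text{ max bad}} |I_0|$ into $\Sigma_1$ plus a constant. Note that $\mu$ is a finite measure supported on $E$, and for each maximal bad $I_0$ the parent $\widehat{I_0}$ meets $E$; the key point is that the top-half $T_{\widehat{I_0}}$ sits at distance $\asymp |\widehat{I_0}| \asymp |I_0|$ from $\T$, and on $T_{\widehat{I_0}}$ one has the two-sided bound $(1-|z|^2)|S'_\mu(z)| \asymp P(\mu)(z)e^{-P(\mu)(z)}$ with $P(\mu)(z) \asymp \mu(\widehat{I_0})/|\widehat{I_0}|$ there (standard Poisson-kernel estimates on a Whitney box). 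Hence $\int_{T_{\widehat{I_0}}}|S'_\mu|\,dA \asymp |\widehat{I_0}|\cdot \frac{\mu(\widehat{I_0})}{|\widehat{I_0}|} e^{-c\mu(\widehat{I_0})/|\widehat{I_0}|}$; so when $\mu(\widehat{I_0})/|\widehat{I_0}|$ is bounded below (say $\ge 1$) this quantity is $\gtrsim |\widehat{I_0}| e^{-c\mu(\widehat{I_0})/|\widehat{I_0}|}$, which is too small, while when $\mu(\widehat{I_0})/|\widehat{I_0}|$ is small it is $\gtrsim \mu(\widehat{I_0})$. I would therefore separate maximal bad arcs into those whose good parent has small density — there the total length is controlled since $\sum \mu(\widehat{I_0}) \le C\mu(\T) < \infty$ after accounting for bounded overlap and $|I_0| \le 2\mu(\widehat{I_0})$ — and those with large density, which form a "Carleson-type" family whose total length is dominated by a geometric-series argument (each generation's total length decays). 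The main obstacle, and where I would spend the most care, is exactly this last dichotomy: proving that the maximal bad arcs with high parent-density have summable lengths, controlled by $\Sigma_1$; this is a stopping-time / Carleson-packing argument on the dyadic tree, and the clean way to run it is to observe that $\sum_{I_0}|I_0|$ restricted to any fixed "good ancestor" $J$ (meaning $I_0 \subseteq J$, $J$ the minimal good arc above $I_0$) is at most $2|J|$, and then dominate $\sum_{J \text{ good}} |J|$ appropriately — but since that can diverge, the resolution must instead be: for each maximal bad $I_0$, charge its box to $T_{\widehat{I_0}}$ and show $|I_0| \lesssim \int_{T_{\widehat{I_0}}}|S'_\mu|\,dA + \mu(\widehat{I_0})$, the first term summing to $\lesssim \Sigma_1$ (bounded overlap of parents) and the second to $\lesssim \mu(\T)$. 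The bound $|I_0| \lesssim \int_{T_{\widehat{I_0}}}|S'_\mu|\,dA + \mu(\widehat{I_0})$ follows from the two-sided Poisson estimate above: if $\mu(\widehat{I_0})/|\widehat{I_0}| \le 2$ then $|I_0| \le |\widehat{I_0}| \lesssim \mu(\widehat{I_0}) + \int_{T_{\widehat{I_0}}}|S'_\mu|\,dA$ is false unless density bounded below away from $0$ — so in fact one needs $\int_{T_{\widehat I_0}}|S'_\mu|\,dA \gtrsim |\widehat I_0|$ whenever the density is in a fixed band, and for densities near $0$ use $\int_{T_{\widehat I_0}}|S'_\mu|\,dA\asymp \mu(\widehat I_0)\gtrsim$ ... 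I would reconcile these by simply noting $x e^{-cx} \gtrsim \min(x,1)\cdot e^{-c} \cdot \mathbf 1$ is not uniform, and instead handle the truly problematic regime (density $\to\infty$) via the elementary fact $\sum_{n} |I_0^{(n)}| < \infty$ whenever all $\widehat{I_0^{(n)}}$ lie in a single Carleson box $Q_{I_0}$ with $\mu(I_0)=0$, which is again \thref{QnotELemma}. So the final structure is: good top-halves give $\Sigma_1$; bad top-halves group into maximal bad boxes $Q_{I_0}$, $\int_{Q_{I_0}}|S'_\mu| \le C|I_0|$, and $\sum_{I_0}|I_0| \le C(\mu(\T) + \Sigma_1)$ by charging each $|I_0|$ to $\mu(\widehat{I_0})$ or to $\int_{T_{\widehat{I_0}}}|S'_\mu|\,dA$ according to whether the density $\mu(\widehat{I_0})/|\widehat{I_0}|$ is $\le 1$ or $\ge 1$, using bounded overlap of the parents $\widehat{I_0}$ in both sums. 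This yields $\Sigma_2 \le C(1 + \Sigma_1)$ and hence the stated equivalence.
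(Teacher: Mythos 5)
Your setup is right: the top-halves $T_I$ tile $\D$, the ``good'' ones (with $I\cap E\neq\emptyset$) contribute exactly the sum in the statement, and the ``bad'' ones group into the Carleson boxes $Q_{I_0}$ over the maximal dyadic arcs $I_0$ disjoint from $E$, each controlled by $C|I_0|$ via \thref{QnotELemma}. But you then go astray on the final step, which is in fact trivial: the maximal dyadic arcs disjoint from $E$ are \emph{pairwise disjoint} (two dyadic arcs are either nested or disjoint, and maximality rules out nesting), so $\sum_{I_0}|I_0|\leq |\T|<\infty$ unconditionally. This is precisely how the paper closes the argument in one line. Your over-estimate $\sum_{I_0}|I_0|\lesssim\sum_{J\cap E\neq\emptyset}|J|$ discards this disjointness and manufactures a nonexistent problem.

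The repair you then attempt does not work. Charging $|I_0|$ to $\mu(\widehat{I_0})$ when the density $\mu(\widehat{I_0})/|\widehat{I_0}|$ is at most $1$ goes the wrong way: small density means $\mu(\widehat{I_0})\leq|\widehat{I_0}|$, and $\mu(\widehat{I_0})$ can be arbitrarily small (even zero) while $|I_0|=|\widehat{I_0}|/2$; the fact that $\widehat{I_0}$ meets $E$ gives no lower bound on its $\mu$-mass. In the high-density regime the charge to $\int_{T_{\widehat{I_0}}}|S'_\mu|\,dA$ also fails, as you yourself observe, because $De^{-cD}\to 0$ as $D\to\infty$. Moreover, the ``two-sided'' bound $(1-|z|^2)|S'_\mu(z)|\asymp P(\mu)(z)e^{-P(\mu)(z)}$ that your charging scheme relies on is only valid as an upper bound: since $S'_\mu=-S_\mu H(\mu)'$ one has $|S'_\mu|=e^{-P(\mu)}|H(\mu)'|$, and $|H(\mu)'(z)|$ can be far smaller than $P(\mu)(z)/(1-|z|)$ because of cancellation in the kernel. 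None of this machinery is needed once you note the disjointness of the maximal bad arcs.
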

\begin{proof}
It is sufficient to show that the above condition implies that $S_\mu \in W^{1}$. To this end, let $\mathcal{G}$ denote the collection of maximal dyadic arcs which do not intersect $E$. Observe that we can write
\[
\int_{\D} \abs{S'_\mu (z)} dA(z) = \sum_{\substack{I \in \Dy \\ I\cap E \neq \emptyset}}\int_{T_I} \abs{S'_\mu (z)} dA(z) + \sum_{I \in \mathcal{G}} \int_{Q_I} \abs{S'_\mu (z)} dA(z)
\]
% Then 
% \[
% \bigcup_{\substack{I \in \Dy \\ I \cap E = \emptyset}}T_I = \bigcup_{I \in \mathcal{G}} Q_I.
% \]
Note that \thref{QnotELemma} gives 
\[
 \sum_{I \in \mathcal{G}} \int_{Q_I} \abs{S'_\mu (z)} dA(z) \leq C \sum_{I \in \mathcal{G}} |I| < \infty,
\]
which is enough to prove the desired claim.
\end{proof}
\noindent
As a consequence, we obtain the following sufficient condition for membership of $S_\mu$ in $W^1$. 
%Now our next results, which will play an important role in construction counter-examples to certain question is the following sufficient condition on a singular measure $\mu$, so that its associated singular inner function $S_\mu$ is a member in $W^1$.

\begin{cor}\thlabel{suffW11cond} Let $\mu$ be a positive finite singular Borel measure supported on a Beurling Carleson set of the unit circle. Then $S_\mu$ belongs to $W^1$.
\end{cor}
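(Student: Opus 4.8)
The strategy is to reduce the claim to the criterion furnished by \thref{eqcondW11}: it suffices to show that
\[
\sum_{\substack{I \in \Dy \\ I \cap E \neq \emptyset }} \int_{T_I} \abs{S'_\mu (z)} dA(z) < \infty
\]
when $E$ is a Beurling-Carleson set carrying $\mu$. On each top-half $T_I$ we have the pointwise bound $(1-|z|^2)|S'_\mu(z)| \le 2 P(\mu)(z)\exp(-P(\mu)(z))$, and since $\abs{T_I} \asymp |I|^2$ while $1-|z| \asymp |I|$ on $T_I$, this gives
\[
\int_{T_I} \abs{S'_\mu(z)}\, dA(z) \lesssim |I|\, \sup_{z \in T_I} P(\mu)(z) \exp\bigl(-P(\mu)(z)\bigr).
\]
So the whole matter comes down to estimating $\sum_{I \cap E \neq \emptyset} |I|\, t_I e^{-t_I}$, where $t_I := \inf_{z\in T_I} P(\mu)(z) \asymp \mu(3I)/|I|$ is (comparable to) the normalized mass of $\mu$ on a fixed dilate of $I$, using that $x e^{-x} \le C_n x^{-n}$ for any $n$.

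**The main step.** The key is to exploit the Beurling-Carleson hypothesis through the elementary estimate $xe^{-x} \lesssim \min(1, x^{-2})$: on arcs $I$ where $t_I \le 1$ we bound the summand by $|I|$, and on arcs where $t_I \ge 1$ we bound it by $|I|/t_I^2 \asymp |I|^3/\mu(3I)^2$. For the first family, maximality considerations keep the total length under control (this is essentially the argument already used in \thref{QnotELemma} / \thref{eqcondW11} for arcs not meeting $E$, adapted here to arcs on which $\mu$ is ``thin''). For the second family, one organizes the dyadic arcs meeting $E$ by generations and uses that $\sum_I |I| \log(|I|/\mu(3I)) \lesssim \int_{\T} \log\operatorname{dist}(\zeta, E)\, dm(\zeta) > -\infty$, together with the bounded-overlap of the dilates $3I$; comparing $|I|^3/\mu(3I)^2$ against $|I|\,(\log(|I|/\mu(3I)))$-type quantities via $e^{-x}\le C x^{-2}$ converts the Beurling-Carleson entropy integral into a finite bound on the sum. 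Concretely, writing $\mu(3I) = |I| e^{-s_I}$ with $s_I \ge 0$, the summand is $|I| s_I e^{-s_I} \lesssim |I| \min(1, s_I^{-1})$, and $\sum_{I \cap E \neq \emptyset} |I| s_I$ (hence a fortiori the truncated sum) is controlled by the entropy of $E$ after accounting for the geometric decay of $|I|$ across scales.

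**Anticipated obstacle.** The delicate point is the bookkeeping that converts the Beurling-Carleson entropy condition $\int_{\T}\log\dist{\zeta}{E}\,dm(\zeta) > -\infty$ into summability of $\sum |I|\, t_I e^{-t_I}$ over dyadic $I$ meeting $E$. One must pass from the continuous quantity $\log\dist{\zeta}{E}$ to the discrete quantities $\log(|I|/\mu(3I))$ attached to top-halves — this requires knowing that $\mu$ cannot be too concentrated relative to the distance-to-$E$ profile — and one must handle arcs at many scales sitting above the same boundary point without losing a divergent factor from the number of generations. The standard device is a stopping-time / corona-type decomposition of the dyadic arcs meeting $E$ according to whether $\mu(3I)/|I|$ has doubled, combined with the fact that $x e^{-x}$ decays faster than any polynomial, so that only finitely much mass survives at each location; summing the resulting geometric series across locations and scales and comparing with the entropy integral closes the argument. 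Once this estimate is in hand, \thref{eqcondW11} immediately yields $S_\mu \in W^1$.
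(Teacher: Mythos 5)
Your reduction to \thref{eqcondW11} is the right frame, but the main step is left as a sketch that does not close, and it misses the one fact that makes the corollary trivial. The paper's proof is two lines: a closed set $E$ of measure zero is a Beurling--Carleson set if and only if
\[
\sum_{\substack{I \in \Dy \\ I \cap E \neq \emptyset}} |I| < \infty
\]
(this dyadic reformulation of the entropy condition is classical; see \cite{borichev2017weak}). Granting this, the Schwarz--Pick bound $(1-|z|^2)|S'_\mu(z)| \leq 2$ gives $\int_{T_I}|S'_\mu|\,dA \lesssim |I|$ for every dyadic $I$, the sum in \thref{eqcondW11} converges, and you are done. No estimate on $P(\mu)$, no $xe^{-x}$, no stopping time is needed.

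Without that reformulation, your argument has a genuine gap precisely where you place the ``first family.'' For the arcs $I$ meeting $E$ with $t_I \leq 1$ you bound the summand by $|I|$ and appeal to ``maximality considerations'' as in \thref{QnotELemma}; but that lemma controls the total length of the \emph{maximal} dyadic arcs \emph{disjoint} from $E$, which are pairwise disjoint and hence have total length at most $2\pi$. The arcs meeting $E$ on which $\mu$ is thin are nested across all generations and are not maximal in any useful sense, so there is no a priori bound on $\sum |I|$ over that family --- indeed, finiteness of that sum is exactly the Beurling--Carleson hypothesis in disguise, which is the point you need to make explicit. The ``second family'' ($t_I \geq 1$) is then superfluous, and in any case your corona-type bookkeeping relating $\sum |I|\,t_I e^{-t_I}$ to the entropy integral is only announced, not carried out. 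I would recommend replacing the entire ``main step'' by the dyadic characterization above.
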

\begin{proof}
    It is well known that a closed set $E \subset \T$ of Lebesgue measure zero is a Beurling Carleson set if and only if
    \[
\sum_{\substack{I \in \Dy \\ I \cap E \neq \emptyset}} |I| < \infty,
\]
see \cite{borichev2017weak}. The claim now follows from Schwarz Lemma and \thref{eqcondW11}.

\end{proof}

% \begin{cor}\thlabel{suffW11cond} Let $\mu$ be a positive finite singular Borel measure carried on a set $E \subset \T$. Assume that 
% \[
% \sum_{\substack{I \in \Dy \\ I \cap E \neq \emptyset}} % \exp \left(-c \frac{\mu(I)}{|I|} \right) |I| < \infty,
%\]
% for some constant $0<c<1$, then $S_\mu$ belongs to % $W^1$.
% \end{cor}

% \begin{proof}
% Note that 
%\[
% \abs{S_\mu'(z)} \leq P(\mu)(z) \exp( -P(\mu)(z) )\frac{1}{(1-|z|)}\lesssim  \exp( -cP(\mu)(z) )\frac{1}{(1-|z|)} % \qquad z\in \D,
%  \]
% for any constant $0<c<1$. Using this in conjunction with a trivial estimate of the Poisson kernel, we get
% \[
% \int_{T_I} \abs{S_\mu'(z)} dA(z) \lesssim \exp \left( -c \frac{\mu(I)}{|I|}  \right) \int_{T_I} \frac{dA(z)}{1-|z|} \asymp \exp \left( -c \frac{\mu(I)}{|I|}  \right) |I|.
% \]
% The rest of the proof now follows from \thref{eqcondW11}.
    
% \end{proof}

% From the above corollary, we immediately see that $S_\mu$ belongs to $W^1$ for any positive finite singular Borel measure $\mu$ supported on Beurling-Carleson set.  %In what follows, we shall demonstrate that $W^1$ contains singular inner functions $S_\mu$ for $\mu$ can essentially have any modulus of continuity and any closed set of Lebesgue measure zero can support $\mu$.

\subsection{No estimate from below implies cyclicity in $\B$}
This subsection is devoted to \thref{THM:NoMOC}, which essentially asserts that no condition on the modulus of continuity on $\mu$ alone is an impediment for $S_\mu$ to be a member in $W^1$. According to the discussion in the introduction, this implies that there cannot be any estimate from below of a singular inner function which ensures it to be cyclic in $\B$.

%In fact, the proof of \thref{THM:NoMOC} and \thref{Modelneg} will be an immediate corollary of the following stronger claim which is our main agenda in this subsection.
% \begin{thm} \thlabel{Nogrowth}Let $w$ be an arbitrary majorant. Then there exists a positive finite Borel singular measure $\mu$ on $\T$ with the following properties.
% \begin{enumerate}
%     \item[(i)] There exists an absolute constant $c>0$, such that for any arc $I \subseteq \T$ 
 %   \[
 %   \mu(I) \leq c w(|I|)
 %    \]
 %    \item[(ii)] $S_\mu \in W^1$.
% \end{enumerate}
% \end{thm}
%demonstrate that for positive finite Borel singular measure $\mu$ on $\T$, there are no conditions on the modulus of continuity on $\mu$ nor on the support of $\mu$, which alone prohibit the containment of $S_{\mu} \in W^1$. Using the development in the previous subsection, we shall prove

\begin{proof}[Proof of \thref{THM:NoMOC}] 
We shall divide the proof in three different steps. 
\proofpart{1}{Construction of $\mu$} 
Let $\{n_l\}_l$ be an increasing sequence of positive integers to be specified later according to the majorant $w$. Set $\mu(\T)=1$ and consider the  subcollection $G_1 = \{I^{(1)}_j\}_j$ of every other dyadic arc of generation $n_1$. In other words $|I^{(1)}_j|= 2^{-n_1}$ and $\text{dist}(I^{(1)}_j,I^{(1)}_{j+1}) = 2^{-n_1}$ for any $j$. On these arcs, we set $\mu(I^{(1)}_j) = 2 |I^{(1)}_j|$ for each $j$, and note that
\[
\sum_j \mu(I^{(1)}_j) = 2 \sum_{j} |I^{(1)}_j| = 1.
\]
Hence $\mu$ spreads its mass precisely on the collection $G_1$. Next we consider the subcollection $G_2 = \{I^2_j\}_j$ of dyadic arcs  of generation $n_2$ contained in $\cup I^{(1)}_j$, in such a way that inside each $I^{(1)}_k$, we pick every other dyadic arc of generation $n_2$ to include in our subcollection. Thus $|I^{(2)}_j|= 2^{- n_2}$ and $\text{dist}(I^{(2)}_j,I^{(2)}_{j+1}) \geq 2^{-n_2}$ for each $j$. With this at hand, we now set $\mu(I^{(2)}_j) = 2^2 |I^{(2}_j|$ for each $j$ and observe that for any arc $I^{(1)}_k \in G_1$, we have
\[
\sum_{I^{(2)}_j \subset I^1_k} \mu(I^{(2)}_j) = 2^2 \sum_{
I^{(2)}_j \subset I^{(1)}_k} |I^{(2)}_j| = 2|I^{(1)}_k| = \mu (I^{(1)}_k).
\]
Hence inside each $I^{(1)}_k$, we again evenly distribute the mass of $\mu$ on every other dyadic arc of generation $n_2$, and denote this joint collection by $G_2 = \{I^{(2)}_j\}$. 
We proceed by induction. Assume the first $l-1$ collections of dyadic arcs have been constructed. Then we consider the collection $G_l = \{I^{(l)}_k\}$ of every other dyadic arc contained in an arc of $G_{l-1}$ and set $\mu(I^{(l)}_j) = 2^l |I^{(l)}_j|$, for each $j$. Hence for each $I^{(l-1)}_k \in G_{l-1}$ we have 
\begin{equation*} 
\sum_{I^{(l)}_j \subset I^{(l-1)}_k} \mu(I^{(l)}_j) = 2^l \sum_{
I^{(l)}_j \subset I^{(l)}_k} |I^{(l)}_j| = 2^{l-1}|I^{(l-1)}_k|= \mu (I^{(l-1)}_k). 
\end{equation*}
Note also that for each $l\geq 1$, we have 
\begin{equation}\label{Ilkprop}
    \sum_{k} |I^{(l)}_k| = 2^{-l}.
\end{equation}
Now extend $\mu$ to a Borel measure on $\T$. Note that by construction, $\mu$ is supported inside $E:=\cap_l \cup_j I^{(l)}_j$, which has Lebesgue measure zero and thus $\mu$ is indeed a positive finite singular measure. 
\proofpart{2}{Verifying condition $(i)$}
We shall now specify a choice for the sequence of positive integers $\{n_l\}_l$ so that condition $(i)$ holds. To this end, notice that since $w(t)/t \to \infty$ as $t\to 0^{+}$, for any non-negative integer $l$, we can pick $n_l$ in such a way that 
\[
\frac{w(2^{-n_l})}{2^{-n_l}} \geq   2^{l} 6.
\]
With this choice, we have
\[
\mu(I^{(l)}_j) = 2^{l}|I^{(l)}_j| = 2^{l}2^{-n_l} \leq  w(2^{-n_l}) / 6 = w(|I_j^{(l)}|) / 6.
\]
We first check that condition (i) holds for dyadic arcs $I$. We fix an arbitrary dyadic arc $I$ of length $2^{-n}$ and pick a positive integer $l$ such that $n_{l-1} \leq n < n_{l}$. We have
\[
\mu(I) = \sum_{j: I^l_j \subset I} \mu(I^{(l)}_j) = \sum_{j: I^l_j \subset I} 2^{l}\abs{I^{(l)}_j} \leq 2^{l-1} |I| \leq \frac{w(2^{- n_{l-1}})}{  2^{- n_{l-1}}} \frac{|I|}{3} \leq \frac{w(|I|)}{3},
\]
where in the last step we have used the property that $w(s)/s \leq 2 w(t)/t$ whenever $s<t$, which easily follows from the sub-additivity of majorants. Let $I \subset \T$ be an arbitrary arc and pick $n>0$ such that 
$2^{-n} < b-a \leq 2^{-(n-1)}$. Since $I$ is contained in at most three dyadic arcs of length $2^{-n}$, the estimate $(i)$ holds. 
%and $J_1 \in \Dy_n$ be the unique dyadic arc containing $a$. If $J_2, J_3 \in \Dy_n$ denotes the two consecutive arcs to the right of $J_1$, then the choice of $n$ clearly implies the containment $(a,b) \subset J_1 \cup J_2 \cup J_3$. Now the following distinct cases can occur. If $b\in J_3$, then $J_2 \subset (a,b)$, and if $b \in J_2$, then we claim that either $(J_1)_+ \subset (a,b)$ or $(J_2)_{-} \subset (a,b)$, where $I_+,I_-$ denotes the right and left of an arc $I$, respectively. Note that the former only fails if $a \in (J_1)_+$, which then by the choise of $n$ forces $b \in (J_2)_+$ and thus $(J_2)_{-} \subset (a,b)$. In any case, we conclude that any arc $I \subset \T$ contains a dyadic arc $J$ such that the union of $J$ together with its at most 2 neighbouring dyadic arcs of the same length contain $I$. This established $(i)$ with constant $C=3$.
\proofpart{3}{Verifying condition $(ii)$}
To this end, for each pair of positive integers $(k,l)$ we consider the set
\[
\Omega^{(l)}_k := \left\{z \in \D:  2^{-n_{l+1}} < 1-|z| \leq 2^{-n_l}, \, z/|z| \in I^{(l)}_k \right\}
\]
and note that according to $(iii)$ of \thref{eqcondW11} it suffices to prove that
\begin{equation}\label{A}
 \sum_{k,l} \int_{\Omega^{(l)}_k} \abs{S_\mu '(z)} dA(z) < \infty.   
\end{equation}
For $z \in \D \setminus \{0 \}$ let $I_z$ denote the arc centered at $z/|z|$ of length $1-|z|$. Observe that there exists a positive constant $c>0$ such that for $z \in \Omega^{(l)}_k$, we have that $P(\mu)(z) \geq c \mu(I_z)/|I_z| \geq c 2^{l}$. Using the formula $S_\mu '(z) = S_\mu (z) H(\mu)'(z)$, $z \in \D$, where $H(\mu)$ denotes the Herglotz transform of $\mu$, we get
\begin{equation}\label{*}
  \sum_{k,l} \int_{\Omega^{(l)}_k} \abs{S_\mu '(z)} dA(z) \leq  \sum_{k,l} e^{-c 2^l} \int_{\Omega^{(l)}_k} \abs{H(\mu) '(z)} dA(z).  
\end{equation}
The rest of the proof shall be devoted to establishing the following estimate
\begin{equation}\label{H'est}
    \int_{\Omega^{(l)}_k} \abs{H(\mu)'(z)} dA(z) \lesssim 2^{l} \, |I^{(l)}_k| = \mu(I^{(l)}_k), \quad k,l \geq 1. 
\end{equation}
Indeed, once \eqref{H'est} is established we simply use the fact that $\sum_k \mu(I_k^{(l)})= 1$ and apply the estimate \eqref{*} to deduce that 
\[
\sum_{k,l} \int_{\Omega^{(l)}_k} \abs{S_\mu '(z)} dA(z) \lesssim \sum_l e^{-c 2^l} \sum_k \mu(I_k^{(l)})  < \infty,
\]
which gives \eqref{A} and finishes the proof. So it only remains to show that estimate \eqref{H'est}  holds. For $z \in \Omega^{(l)}_k$ we write
\begin{multline*}
H(\mu)'(z) = \sum_j \int_{I^{(l+1)}_j} \frac{2\zeta d\mu(\zeta)}{(\zeta-z)^2}  = \sum_{\substack{j: I^{(l+1)}_j \subseteq I^{(l)}_k}} \int_{I^{(l+1)}_j} \frac{2\zeta d\mu(\zeta)}{(\zeta-z)^2}  + \sum_{\substack{j: I^{(l+1)}_j \cap I^{(l)}_k}= \emptyset } \int_{I^{(l+1)}_j} \frac{2\zeta d\mu(\zeta)}{(\zeta-z)^2} \\ 
= (I) + (II).
\end{multline*}
We shall first treat the term $(II)$. Note that if $z\in \Omega^{(l)}_k$ and $\zeta \in I^{(l+1)}_j$ with $I^{(l+1)}_j \cap I^{(l)}_k = \emptyset$, then $\abs{\zeta-z} \gtrsim |I^{(l)}_k|$. Then 
\[
\abs{(II)} \lesssim \sum_{\substack{j: I^{(l+1)}_j \cap I^{(l)}_k}= \emptyset }  \frac{\mu(I^{(l+1)}_j)}{\dist{I^{(l+1)}_j}{I^{(l)}_k}^2} = 2^{l+1} \sum_{\substack{j: I^{(l+1)}_j \cap I^{(l)}_k}= \emptyset } \frac{|I^{(l+1)}_j|}{\dist{I^{(l+1)}_j}{I^{(l)}_k}^2} \lesssim \frac{2^l}{|I^{(l)}_k|}.
\]
This implies 
\[
\int_{\Omega^{(l)}_k} \abs{(II)} dA \lesssim 2^l |I^{(l)}_k|,
\]
and hence we may now devote our attention to estimating $(I)$. To this end, we shall decompose it into two pieces by rewriting
\begin{multline*}
(I) = \sum_{\substack{j: I^{(l+1)}_j \subseteq I^{(l)}_k}} \int_{I^{(l+1)}_j} \frac{2\zeta}{(\zeta-z)^2} 2^{l+1}dm (\zeta) + \sum_{\substack{j: I^{(l+1)}_j \subseteq I^{(l)}_k}} \int_{I^{(l+1)}_j} \frac{2\zeta}{(\zeta-z)^2} \left( d\mu (\zeta) -2^{l+1}dm (\zeta) \right) \\ = A+B.
\end{multline*}
Denote by $\xi^{(l+1)}_j$ the center of $I^{(l+1)}_j$ and observe that since $\mu(I^{(l+1)}_j) = 2^{l+1} |I^{(l+1)}_j|$, we can add a cancellative term to $B$ and write   
\[
B = \sum_{\substack{j: I^{(l+1)}_j \subseteq I^{(l)}_k}} \int_{I^{(l+1)}_j} \left(\frac{2\zeta}{(\zeta-z)^2}-\frac{2\xi^{(l+1)}_j}{(\xi^{(l+1)}_j -z)^2} \right) \left( d\mu ( \zeta) -2^{l+1}dm (\zeta) \right).
\]
Now using the mean value Theorem together and the fact that $|\zeta-z| \gtrsim |\xi^{(l+1)}_j-z|$ whenever $\zeta \in I^{(l+1)}_j$ and $z\in \Omega_k^{(l)}$, we get 
\begin{equation}\label{|B|est}
|B| \lesssim \sum_{\substack{j: I^{(l+1)}_j \subseteq I^{(l)}_k}} \frac{\mu(I^{(l+1)}_j)+ 2^{l+1}|I^{(l+1)}_j| }{|\xi^{(l+1)}_j - z|^3} |I^{(l+1)}_j| =  \sum_{\substack{j: I^{(l+1)}_j \subseteq I^{(l)}_k}} \frac{ 2^{l+2}|I^{(l+1)}_j|^2}{|\xi^{(l+1)}_j -z|^3}.
\end{equation}
A straightforward calculation shows that 
\begin{equation} \label{IntQ3}
\int_{\Omega_k^{(l)}} \frac{dA(z)}{|\xi^{(l+1)}_j -z|^3} \lesssim \frac{1}{|I^{(l+1)}_j|}.
\end{equation}
Combining this with \eqref{|B|est}, we conclude
\begin{equation*}
\int_{\Omega_k^{(l)}}|B| dA   \lesssim 2^{l} \sum_{\substack{j: I^{(l+1)}_j \subseteq I^{(l)}_k}} |I^{(l+1)}_j| \leq 2^{l}|I^{(l)}_k|.
\end{equation*}
Hence it only remains to estimate the quantity $A$. Denote by $I+|I|$ the arc $I$ rotated by $|I|$-units and observe that by construction of the dyadic arcs $I^{(l+1)}_j$, we have 
\[
\bigcup_{\substack{j: I^{(l+1)}_j \subseteq I^{(l)}_k}} I^{(l+1)}_j \cup \left(I^{(l+1)}_j + |I^{(l+1)}_j| \right) = I^{(l)}_k.
\]
Applying the mean value Theorem as before, we get 
\[
\abs{ \int_{I^{(l+1)}_j}\frac{\zeta dm(\zeta)}{(\zeta -z)^2} - \int_{I^{(l+1)}_j + |I^{(l+1)}_j|} \frac{\zeta dm(\zeta)}{(\zeta -z)^2}  } \lesssim \frac{|I^{(l+1)}_j|^2}{|z-\xi^{(l+1)}_j|^3}, \quad z \in  \Omega_k^{(l)} , \quad j,l, k \geq 1.
\]
Hence 
\begin{multline*}
    \int_{\Omega^{(l)}_k} \abs{ A - 2^l \int_{I^{(l)}_k} \frac{2\zeta dm(\zeta)}{(\zeta-z)^2} } dA(z) \lesssim \\ 
    \int_{\Omega^{(l)}_k} 2^l  \sum_{\substack{j: I^{(l+1)}_j \subseteq I^{(l)}_k}} \abs{ \int_{I^{(l+1)}_j}\frac{\zeta dm(\zeta)}{(\zeta -z)^2} - \int_{I^{(l+1)}_j + |I^{(l+1)}_j|} \frac{\zeta dm(\zeta)}{(\zeta -z)^2}  } dA(z)  \lesssim  \\
    2^l  \sum_{\substack{j: I^{(l+1)}_j \subseteq I^{(l)}_k}} |I^{(l+1)}_j|^2 \int_{\Omega^{(l)}_k} \frac{dA(z)}{\abs{\xi^{l+1}_j -z }^3}
    \lesssim 2^l  \sum_{\substack{j: I^{(l+1)}_j \subseteq I^{(l)}_k}} |I^{(l+1)}_j| \leq 2^l |I^{l}_k|.
\end{multline*}
In the penultimate step, we have used \eqref{IntQ3}. In order to finish the proof, it just remains to verify that 
\begin{equation}\label{end}
\int_{\Omega^{(l)}_k}  \abs{\int_{I^{(l)}_k} \frac{\zeta dm(\zeta)}{(\zeta-z)^2}} dA(z) \lesssim  |I^{(l)}_k|.
\end{equation} 
To this end, let $I^{(l)}_k = (a^{(l)}_k, b^{(l)}_k)$ and observe that fixed $z \in \D$, the primitive of $\zeta/(\zeta -z)^2$ is explicitly given by $\log(\zeta -z) - z/(\zeta-z)$, which allows us to compute 
\[
\int_{I^{(l)}_k} \frac{\zeta dm(\zeta)}{(\zeta-z)^2} = \log \left( \frac{b^{(l)}_k-z}{a_k^{(l)}-z} \right) + \frac{z|I^{(l)}_k|}{(b^{(l)}_k-z)(a_k^{(l)}-z)}.
\]
Then  
\[
\abs{ \int_{I^{(l)}_k} \frac{\zeta dm(\zeta)}{(\zeta-z)^2} } \lesssim \frac{|I^{(l)}_k|}{|b^{(l)}_k-z||a_k^{(l)}-z|}, \qquad z\in \Omega^{(l)}_k,
\]
and thus
\[
\int_{\Omega^{(l)}_k}  \abs{\int_{I^{(l)}_k} \frac{\zeta dm(\zeta)}{(\zeta-z)^2}} dA(z) \lesssim  |I^{(l)}_k| \int_{\Omega_k^{(l)}} \frac{dA(z)}{|b^{(l)}_k-z||a_k^{(l)}-z|}.
\]
Since either $|z-b_k^{(l)}|\geq |I^{(l)}_k|/2$ or $|z-a_k^{(l)}|\geq |I^{(l)}_k|/2$ for any $z\in \Omega^{(l)}_k$, we deduce 
\[
\int_{\Omega_k^{(l)}} \frac{dA(z)}{|b^{(l)}_k-z||a_k^{(l)}-z|} 
% \lesssim \frac{1}{|I^{(l)}_k|} \int_{\Omega_k^{(l)}} \frac{dA(z)}{|b^{(l)}_k-z|} 
\lesssim 1.
\]
This gives \eqref{end} and finishes the proof.
% All things considered, we finally obtain 
% \[
% \int_{\Omega^{(l)}_k} |A| dA \lesssim 2^l |I^{(l)}_k|,
% % \]
% which allows us to conclude that $S_\mu$ belongs to % $W^1$.
\end{proof}

%  It is worth to remark that the estimate in \eqref{H'est} will essentially be sharp in the sense that if $L$ denotes a radial segment joining a point $a$ on the top of $\Omega^{(l)}_k$ to point $b$ on the bottom of $\Omega^{(l)}_k$, then 
% \[
% \int_{L}|H(\mu)'(\zeta)| |d\zeta| % \geq \abs{H(\mu)(a)-H(\mu)(b)} \geq % \abs{P(\mu)(a)-P(\mu)(b)} \gtrsim 2^l.
% \]
% Integrating over all possible segments and using Fubini's theorem it follows that 
% \[
% \int_{\Omega^{(l)}_k} % \abs{H(\mu)'(z)} dA(z) \gtrsim 2^{l} % \, |I^{(l)}_k|.
% \]

\subsection{The permanence property discriminates no compact set}

Here we devote our attention to the proof of \thref{THM:Nosupp}, which roughly asserts that any compact set on the unit circle of Lebesgue measure zero can support a singular measure $\mu$ for which the associated singular inner function is a member of $W^1$. This implies that no condition on the carrier set of a singular measure $\mu$ alone can describe the permanence property of $S_\mu$ in the Bloch space.

%Proof of \thref{THM:Nosupp}}

% The proof of \thref{THM:Nosupp}, which is similar in spirit as that of \thref{Nogrowth}.
%
%
%
\begin{proof}[Proof of \thref{THM:Nosupp}]
The proof is divided in three steps. 

\proofpart{1}{Constructing a covering of $E$} We first construct by induction an appropriate sequence of coverings of $E$ by dyadic arcs. Set $\G_0 := \{ \T \}$ and assume that a covering $\G_k = \{I^{(k)}_j: j\geq 1\}$ of $E$ by dyadic arcs has already been constructed  
% \[
% E \subset \bigcup_{j \geq 1} I^{(k)}_j 
% \]
and $E \cap I^{(k)}_j \neq \emptyset$ for any $j\geq 1$. For any $I \in \G_k$ we denote by $\widetilde{\G}(I)$ the collection of maximal dyadic subarcs $\widetilde{J}$ of $I$ such that at least one of the two dyadic children of $\widetilde{J}$ does not intersect $E$. Note that $\widetilde{G}(I) = \{I\}$ if one of the two dyadic children of $I \in \G_k$ does not intersect $E$. Moreover, by maximality of the collection $\widetilde{G}(I)$, for each $\widetilde{J} \in \widetilde{G}(I)$ there exists exactly one dyadic child $J$ of $\widetilde{J}$ with $J \cap E \neq \emptyset$. 
%Moreover, since $E$ has Lebesgue measure zero, for any
% any $I \in \G_k$ can be written as a disjoint union
%\begin{equation} \label{GIpartition}
%I= \bigcup_{\widetilde{J}\in \widetilde{G}(I)} \widetilde{J}.
% \end{equation}
Now consider
\[
\G(I) := \left\{J: J \, \, \text{dyadic child of some} \, \, \widetilde{J} \in \widetilde{\G}(I),  \text{ with }   J \cap E \neq \emptyset \right\}. 
\]
The set $\G_{k+1} := \bigcup_{I \in \G_k} \G(I)$ is again a covering of $E$ that will be denoted by $\G_{k+1} = \{I^{(k+1)}_j: j \geq 1 \}.$ Since $E$ is a closed set of Lebesgue measure zero, for any arc $I \in \G_k$, the union of the arcs of the collection $\widetilde{G} (I)$ covers almost every point of $I$. Hence the collection $\{\G_k\}_k$ satisfies the following packing condition: for any $I^{(k)}_l \in \G_k$ we have 
\begin{equation} \label{stackeq}
\sum_{ \substack{J \in \G_{k+1} \\ J\subset I^{(k)}_l  }} |J| = \sum_{\substack{j\geq 1 \\ I^{(k+1)}_j \subseteq I^{(k)}_l}} |I^{(k+1)}_j| = \frac{|I^{(k)}_l|}{2},\qquad k,l \geq 1.
\end{equation}
Observe that by construction $E= \cap_k \cup_{J\in \G_k} J$. Indeed, the inclusion $\subseteq$ follows from the fact that each $\G_k$ is a covering of $E$, while if $\z_0 \notin E$, then there exists a dyadic arc $I_0$ containing $\z_0$ with $I_0 \cap E = \emptyset$. Hence for every sufficiently large $k$, we must have $\z_0 \in I_0 \subset \cap_{J \in \G_k} \T \setminus J$, which establishes the reverse inclusion $\supseteq$.
\proofpart{2}{Constructing the measure $\mu$:}
We now construct $\mu$ by declaring its mass on dyadic arcs. Initially, we set $\mu(\T)=1$. Now let $I \subset \T$ be a dyadic arc and assume, by means of induction, that $\mu (I)$ has already been defined in such a way that $\mu(I)=0$ whenever $I\cap E = \emptyset$. Denote by $I_+,I_-$ the dyadic children of $I$. If both $I_+, I_-$ happens to meet $E$, then we distribute the mass evenly by declaring
\[
\frac{\mu(I_+)}{|I_+|} = \frac{\mu(I_-)}{|I_-|} = \frac{\mu(I)}{|I|}.
\]
If not, say $I_+ \cap E = \emptyset$, then we distribute all the mass to $I_-$ by setting
\[
\frac{\mu(I_-)}{|I_-|} = 2 \frac{\mu(I)}{|I|},\qquad \mu(I_+)=0.
\]
Indeed, this is consistent since 
\[
\frac{\mu(I)}{|I|} = \frac{1}{2}\frac{\mu(I_+)}{|I_+|} + \frac{1}{2}\frac{\mu(I_-)}{|I_-|}.
\]

Observe that by construction, the support of $\mu$ is equal to $E=\cap_k \cup_{J \in \G_k} J$ which has  Lebesgue measure zero. Hence $\mu$ is a non-trivial singular measure. Moreover, it is clear that for any dyadic arc $I$ with $I\cap E \neq \emptyset$, we have $\mu(I)\geq |I|$. Observe also that by construction, for any $ I^{(k)}_j \in \G_k$, we have 
\begin{equation}\label{Qest2k}
\frac{\mu(I^{(k)}_j)}{|I^{(k)}_j|} \geq 2^k, \quad j,k \geq 1.
\end{equation}
\proofpart{3}{Verifying condition $(ii)$}
It now remains to prove that the associated singular inner function $S_\mu$ belongs to $W^1$. To this end, for each $k\geq 1$ and $I^{(k)}_j \in \G_k$ consider the region
\[
\Omega^{(k)}_j := Q_{I^{(k)}_j} \setminus \bigcup_{\widetilde{J} \in \widetilde{\G}(I^{(k)}_j)} Q_{\widetilde{J}}.
\]
According to (iii) of \thref{eqcondW11}, it is sufficient to establish that
\[
\sum_{k,j} \int_{\Omega^{(k)}_j} \abs{S_\mu '} dA < \infty.
\]
Next we shall make yet another reduction. Let $I=I^{(k)}_j$ and denote by $\mu_I$ the restriction of $\mu$ to $I$. According to \thref{QnotELemma}, there exists an absolute constant $C>0$ such that  
\[
\int_{Q_I} |S'_{\mu_{\T \setminus I}}(z)| dA(z) \leq C |I|.
\]
Hence using \eqref{stackeq}, we get
\[
\sum_{k,j} \int_{\Omega^{(k)}_j}|S'_{\mu_{\T \setminus I^{(k)}_j }}(z)| dA(z) \leq C \sum_{k,j} |I^{(k)}_j| = \sum_{k\geq 1} 2^{-k} = C.
\]
With this observation at hand, it then suffices to show that 
\begin{equation}\label{estrella}
    \sum_{k,j} \int_{\Omega^{(k)}_j} |S_{\mu_{I^{(k)}_j}} '(z)| dA(z) < \infty.
\end{equation}
Using \eqref{Qest2k} in conjunction with standard estimates of Poisson kernels, there exists a universal constant $c>0$ such that 
\begin{equation}
   |S_{\mu_{I^{(k)}_j}} '(z)|  \leq e^{-c 2^k}  |H(\mu_{I^{(k)}_j})'(z)| ,\quad z \in \Omega^{(k)}_j , k,j\geq 1.  
\end{equation}
Thus \eqref{estrella} follows from the estimate 
\[
\int_{\Omega^{(k)}_j} |H(\mu_{I^{(k)}_j})'(z) | dA(z) \lesssim \mu(I^{(k)}_j) \quad j,k \geq 1.
\]
whose proof is quite similar to the proof of estimate \eqref{H'est}. We omit the details.

\end{proof}

\section{Cyclicity and invertibility}
\label{SEC:SuffcycB1}

\subsection{A sufficient condition for cyclicity}

This subsection is devoted to proving \thref{THM:SuffcycB}. A finite complex Borel measure $\nu$ on $\T$ (not necessarily singular) is said to be a \emph{Zygmund measure} if 
\[
\norm{\nu}_{*} := \, \sup \, \abs{\frac{\nu(I)}{|I|}-\frac{\nu(I')}{|I'|}} < \infty , 
\]
where the supremum is taken over all pairs of contiguous arcs $I,I'$ of the same length. %A simple but useful property of Zygmund measures is that there exists an absolute constant $C>0$ such that for any arc $I\subseteq \T$, one has
%\[
%\abs{\frac{\nu(I)}{|I|}-\frac{\nu(I/2)}{|I/2|}} \leq C \norm{\nu}_*
%\]
%where $I/2$ denotes the arc of length $|I|/2$ with the same center as $I$. It turns that Zygmund measures are intimately connected to the Bloch space. 
For the sake of future references, we shall state the following result due to Duren, Shapiro and Shields in \cite{duren1966singular} below, see also \cite{doubtsov2002symmetric}. 

\begin{lemma} \thlabel{ZygBloch}
A finite complex Borel measure $\nu$ on $\T$ is a Zygmund measure if and only if its Herglotz transform defined as 
\[
H(\nu)(z) = \int_{\T} \frac{\zeta +z}{\zeta-z} d\nu(\zeta),\qquad z\in \D,
\]
belongs to $\B$. In this case, there exists a universal constant $C>0$ such that 
\[
 \abs{H(\nu)(z) - H(\nu)(w)} \leq  C \norm{\nu}_* \beta(z,w),   \quad z,w \in \D , 
\]
where $\beta (z,w)$ denotes the hyperbolic distance on $\D$ between the points $z$ and $w$. Moreover, the following asymptotic relation holds:
\begin{equation}\label{estimacio}
P(\nu)(z) = \frac{\nu(I_z)}{|I_z|} + \mathcal{O}\left( 1 \right) \norm{\nu}_*, \qquad z \in \D . 
\end{equation}
where $I_z \subset \T$ denotes the arc centered at $z/|z|$ of length $1-|z|$.
\end{lemma}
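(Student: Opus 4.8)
The three assertions of the lemma are refinements of one estimate, and the plan is to isolate it first. Since $H(\nu)$ is analytic on $\D$ and $H(\nu)(0)=\nu(\T)$, both membership in $\B$ and the displayed hyperbolic Lipschitz bound follow once we establish
\[
(1-|z|^2)\,\abs{H(\nu)'(z)} \le C\,\norm{\nu}_{*}, \qquad z\in \D ,
\]
for an absolute constant $C$: indeed, integrating $H(\nu)'$ along the hyperbolic geodesic $\gamma$ joining $z$ to $w$ gives $\abs{H(\nu)(z)-H(\nu)(w)} \le C\norm{\nu}_{*}\int_\gamma \abs{d\z}/(1-\abs{\z}^2)=C\norm{\nu}_{*}\,\beta(z,w)$. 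The asymptotic relation \eqref{estimacio} will come from the same circle of ideas applied to $P(\nu)(z)-\nu(I_z)/\abs{I_z}=\int_\T P_z\,d\nu-\abs{I_z}^{-1}\int_{I_z}d\nu$, which one rewrites as the integral of $\nu$ against a kernel of vanishing mean. For the converse it suffices to prove: if $\sup_{z}(1-\abs{z})\abs{H(\nu)'(z)}<\infty$ then $\nu$ is a Zygmund measure.

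For the forward estimate, first normalise by subtracting $\nu(\T)\,dm$; this changes neither $H(\nu)'$ nor $\norm{\nu}_{*}$, so we may assume $\nu(\T)=0$ and that the distribution function $F$ of $\nu$ on $\T$ is continuous (absence of atoms is forced by the Zygmund condition) and $2\pi$-periodic, its second differences being controlled by $\norm{\nu}_{*}$. Integrating by parts, $H(\nu)'(z)=\int_\T \big(F(\z)-F(\z_0)\big)K(\z,z)\,dm(\z)$, where $\z_0=z/\abs{z}$, $K(\z,z)=2i\z(\z+z)(\z-z)^{-3}$, and the subtraction of $F(\z_0)$ is harmless because $\int_\T K(\z,z)\,dm(\z)=0$. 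Set $\delta=1-\abs{z}$; decompose $\T$ into the arc $I_\delta$ of length $\asymp\delta$ centred at $\z_0$ and dyadic rings $A_k$ ($k=1,\dots,N$, $N\asymp\log(1/\delta)$) at distance $\asymp 2^k\delta$ from $\z_0$, and write $\widetilde I_k=I_\delta\cup A_1\cup\cdots\cup A_k$. The crucial input is the classical property of the Zygmund class: on every arc $J$, $F$ agrees with an affine function (in the arclength variable $s$ about the centre of $J$) up to an error $\lesssim\norm{\nu}_{*}\abs{J}$, with \emph{no} logarithmic loss. Applying this on each $\widetilde I_k$ produces affine $L_k(\z)=F(\z_0)+c_k+m_k s$ with $\norm{F-L_k}_{\infty,\widetilde I_k}\lesssim \norm{\nu}_{*}2^k\delta$; comparing consecutive scales gives $\abs{m_k-m_{k+1}}\lesssim\norm{\nu}_{*}$ and $\abs{c_k-c_{k+1}}\lesssim\norm{\nu}_{*}2^k\delta$, and periodicity gives $\abs{m_N},\abs{c_N}\lesssim\norm{\nu}_{*}$. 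On $A_k$ we split $F(\z)-F(\z_0)=\big(L_k(\z)-L_k(\z_0)\big)+\big(F(\z)-L_k(\z)\big)+c_k$. The remainder term is bounded termwise by $\norm{\nu}_{*}2^k\delta\cdot 2^k\delta\cdot(2^k\delta)^{-3}=\norm{\nu}_{*}(2^k\delta)^{-1}$, summing to $\norm{\nu}_{*}/\delta$; for the affine and offset terms one performs a summation by parts over the scale index $k$, using that the integrals $\int_{\widetilde I_k}K\,dm$ and $\int_{\widetilde I_k}sK\,dm$ are, by the fundamental theorem of calculus, $O((2^k\delta)^{-2})$ and $O((2^k\delta)^{-1})$ respectively, so Abel summation transfers the differencing onto the slowly varying sequences $(c_k)$ and $(m_k)$ and again yields a geometric series summing to $\lesssim\norm{\nu}_{*}/\delta$. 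Altogether $\abs{H(\nu)'(z)}\lesssim\norm{\nu}_{*}/\delta$.

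For the converse one invokes the classical fact (Zygmund's smoothness theorem) that a function whose Poisson extension has gradient $O(1/(1-\abs{z}))$ is Zygmund-smooth: if $M:=\sup_z(1-\abs{z})\abs{H(\nu)'(z)}<\infty$, then evaluating $\Im H(\nu)'$ at the point $(1-t)\z$ recovers a scale-$t$, location-$\z$ average of $\nu$ against a smooth, rapidly decaying, odd kernel of vanishing mean, with modulus $\lesssim Mt$; since the kernel producing the symmetric second difference (equal to $+1$ on $(0,1)$, $-1$ on $(-1,0)$, $0$ elsewhere) can be written as an absolutely integrable superposition of dilates of that kernel (a Calderón reproducing formula), one gets $\abs{\nu((x,x+t))-\nu((x-t,x))}\lesssim Mt$ for all $x,t$, i.e.\ $\norm{\nu}_{*}\lesssim M$ (and $\nu$ has no atoms). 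Finally, integrating the forward estimate along hyperbolic geodesics gives the stated inequality with $\beta(z,w)$, and the asymptotic \eqref{estimacio} is obtained from the same dyadic / affine-approximation / summation-by-parts scheme applied to $P(\nu)(z)-\nu(I_z)/\abs{I_z}$ (where the Poisson kernel decays even faster than $K$), completing the proof.

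The main obstacle is the forward estimate, and inside it the realisation that the Zygmund hypothesis must be used \emph{twice}: once as the logarithm-free affine approximation of $F$ on each arc, and once through the slow variation of the approximating slopes $m_k$ and offsets $c_k$ across dyadic scales, which is exactly what a summation by parts turns into a convergent geometric series. A termwise estimate — or use of the affine approximation without resummation over scales — only gives the non-uniform bound $(1-\abs{z})\abs{H(\nu)'(z)}\lesssim \norm{\nu}_{*}\log(1/(1-\abs{z}))$, which is too weak. By contrast, the passage from the derivative bound to the hyperbolic Lipschitz estimate, the extraction of \eqref{estimacio}, and the converse direction are routine or classical.
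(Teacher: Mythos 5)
The paper does not actually prove this lemma: it is quoted as a known result of Duren--Shapiro--Shields, with the quantitative hyperbolic Lipschitz bound and the asymptotic \eqref{estimacio} referred to \cite{doubtsov2002symmetric}. Your proposal therefore cannot coincide with ``the paper's proof''; what you supply is a self-contained argument, and as far as I can check it is sound. The reduction of everything to $(1-|z|^2)|H(\nu)'(z)|\lesssim \norm{\nu}_*$, the normalisation $\nu \mapsto \nu - \nu(\T)\,dm$, the integration by parts against the primitive $F$, and the two uses of the Zygmund hypothesis (log-free affine approximation of $F$ on each arc, plus slow variation of the chord slopes $m_k$ across dyadic scales, resummed by parts) are exactly the right mechanism, and your closing remark correctly identifies why a termwise estimate only yields $\log(1/(1-|z|))$. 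One step deserves more than the phrase ``by the fundamental theorem of calculus'': the bound $\int_{\widetilde I_k} sK\,dm = O\bigl((2^k\delta)^{-1}\bigr)$ is \emph{not} a size estimate — after integrating by parts you are left with $\int_{\widetilde I_k}\frac{2\zeta}{(\zeta-z)^2}\,ds$, whose absolute value is $\asymp \delta^{-1}$ uniformly in $k$; you need the genuine cancellation coming from the explicit arclength primitive $2i/(\zeta-z)$ of $2\zeta(\zeta-z)^{-2}$ on $\T$ to get the decaying bound, without which the $m_k$-sum would reintroduce the logarithm. With that point made explicit, and granting the classical converse (Zygmund's smoothness theorem) and the deferred but analogous computation for \eqref{estimacio}, your argument is a correct replacement for the citation.
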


We will also use an auxiliary result that has appeared in the setting of $\mathbb{R}^d$ in \cite{doubtsov2002symmetric}. We shall need it in the context of the unit circle where its proof is an exercise left to the reader.  

%Since we will use in the context of the unit circle, we provide its simple proof.

\begin{lemma} \thlabel{Herg'est}
Let $\nu$ be a finite Borel measure on $\T$. Then there exists an absolute constant $C>0$, such that 
\[
(1-|z|) \abs{H(\nu)'(z)} \leq C \int_0^{2 \pi} \frac{(1-|z|)}{|e^{it}-|z||^3} \abs{\nu(I(z,t)) -\nu(I(z,-t))} dt , \quad z \in \D , 
\]
where $I(z,t)$ denotes the smallest arc joining $z/|z|$ to $z e^{it} / |z|$ if $z \neq 0$ and $I(0,t)$ is the arc joining 1 and $e^{it}$.
\end{lemma}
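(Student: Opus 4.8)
The plan is to reduce the claimed bound to a direct computation with the explicit kernel of the Herglotz transform and then exploit the oddness structure already present in the right-hand side.

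First I would differentiate under the integral sign. Since $H(\nu)(z) = \int_{\T} \frac{\zeta+z}{\zeta-z}\,d\nu(\zeta)$, we have $H(\nu)'(z) = \int_{\T} \frac{2\zeta}{(\zeta-z)^2}\,d\nu(\zeta)$. Writing $\zeta = e^{i\theta}$, the idea is to fix $z \neq 0$, set $z/|z| = e^{i\alpha}$, and substitute $\theta = \alpha + t$ so that the circle is parametrized symmetrically around the radial projection of $z$. The key point is that the kernel $k_t := \frac{2 e^{i(\alpha+t)}}{(e^{i(\alpha+t)}-z)^2}$, after this recentring, has the property that its symmetrization $k_t + k_{-t}$ is much smaller than either term individually: a Taylor expansion in $t$ (or direct estimate using $|e^{i(\alpha+t)}-z| \asymp |e^{it} - |z||$ and $|e^{i(\alpha+t)} - e^{i(\alpha-t)}| \lesssim |t|$) gives $|k_t + k_{-t}| \lesssim \frac{|t|}{|e^{it}-|z||^3}$, while the odd part is controlled in the obvious way. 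I would then pair the odd part of the kernel against $\nu$, and use summation/integration by parts (Abel summation in the continuous setting) to transfer the difference structure from the kernel onto $\nu$, producing the quantity $\nu(I(z,t)) - \nu(I(z,-t))$.

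More concretely: split $\int_{\T} k_{t-\alpha}\, d\nu = \int_0^\pi (k_t + k_{-t})\, d\tilde\nu$-type terms plus a genuinely oscillatory integral, where one writes $\int_0^\pi k_t\, d\nu_+(t) + \int_0^\pi k_{-t}\, d\nu_-(t)$ with $\nu_\pm$ the pushforwards of $\nu$ under $t \mapsto \alpha \pm t$. Introducing the difference measure and integrating by parts against the antiderivative of the kernel in $t$, the boundary terms vanish or are absorbed, and one is left with $\int_0^\pi \big(\text{derivative of kernel}\big)\big(\nu(I(z,t)) - \nu(I(z,-t))\big)\, dt$. The derivative of the kernel in the $t$-variable is of size $\lesssim |e^{it}-|z||^{-3}$, which after multiplying through by $(1-|z|)$ yields exactly the asserted bound. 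The constant $C$ is absolute throughout.

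The main obstacle I anticipate is bookkeeping the integration-by-parts step cleanly: one must verify that the primitive of $\zeta/(\zeta-z)^2$ along the arc (which, as noted elsewhere in the paper, is $\log(\zeta-z) - z/(\zeta-z)$) behaves well near the endpoints $t = 0$ and $t = \pi$, that the antisymmetrization genuinely kills the worst $|e^{it}-|z||^{-2}$ singularity down to the integrable-against-the-estimate $|e^{it}-|z||^{-3}\cdot|t|$ behavior, and that no logarithmic term survives uncontrolled. Since the paper explicitly flags this as "an exercise left to the reader" and cites the $\mathbb{R}^d$ analogue in \cite{doubtsov2002symmetric}, I would keep the write-up brief, emphasizing the recentring substitution and the antisymmetrization as the two conceptual ingredients, and leaving the elementary kernel estimates implicit.
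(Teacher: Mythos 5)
The paper gives no proof of this lemma at all --- it is explicitly left ``as an exercise to the reader'' with a pointer to the $\mathbb{R}^d$ analogue in the reference on symmetric measures --- so your proposal has to stand on its own, and unfortunately its central kernel estimate is wrong: you have the symmetric and antisymmetric parts of the kernel interchanged. After rotating so that $z=r\in(0,1)$, one has $k_{-t}=\overline{k_t}$, hence
$k_t+k_{-t}=2\Re k_t=4\,\frac{(1+r^2)\cos t-2r}{|e^{it}-r|^4}$, which at $t=0$ equals $4(1-r)^{-2}$ and is of full size $\asymp |e^{it}-r|^{-2}$ throughout $|t|\lesssim 1-r$; it is \emph{not} $O\big(|t|\,|e^{it}-r|^{-3}\big)$. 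It is the \emph{difference} $k_t-k_{-t}=2i\Im k_t=-4i(1-r^2)\sin t\,|e^{it}-r|^{-4}$ that satisfies the small bound you claim. This matters because the even part of the kernel pairs with the part of $\nu$ that is symmetric about the radius through $z$, and the right-hand side of the lemma cannot see that part: for $\nu=\delta_{e^{i\theta_0}}+\delta_{e^{-i\theta_0}}$ and $z=r$ the integrand $|\nu(I(z,t))-\nu(I(z,-t))|$ vanishes identically, while taking $\theta_0=\sqrt{1-r}$ gives $(1-r)|H(\nu)'(r)|=4(1-r)\,|(1+r^2)\cos\theta_0-2r|\,|e^{i\theta_0}-r|^{-4}\to 4$ as $r\to 1$. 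So the step in which the remaining part of the kernel is ``controlled in the obvious way'' or ``absorbed'' cannot be carried out; no choice of bookkeeping in the integration by parts will fix it, since the displayed inequality fails for such symmetric measures.

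What survives of your plan is the treatment of the odd part: integrating $k_t-k_{-t}$ by parts against the push-forwards of $\nu$ does produce exactly $\int_0^{\pi}O\big(|e^{it}-r|^{-3}\big)\,|\nu(I(z,t))-\nu(I(z,-t))|\,dt$, with controllable boundary terms, and this already proves the asserted bound for the tangential derivative of $P(\nu)$, whose kernel is genuinely odd. For the even part one must instead invoke the cancellation $\int_0^{2\pi}k_t\,dt=0$ and subtract a suitable multiple of Lebesgue measure, which yields an additional right-hand term measuring how far $\nu(I(z,t))+\nu(I(z,-t))$ is from $c\,|t|$ (equivalently, a Zygmund-type oscillation of $\nu$ at scale $1-|z|$). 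That extra term is harmless in the applications in Section 5, where $\nu$ is a Zygmund measure and conditions \eqref{expzyginfcond}--\eqref{expzyginfcond2} control precisely this oscillation, but it must appear in any correct statement and proof; you should flag this when writing up the ``exercise.''
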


%\begin{proof} OMIT THE PROOF AND REFER TO [7]?? Since $H'(dm)=0$, we may assume that $\nu(\T)=0$. For the sake of abbreviation, we set 
%$F_{\nu}(t) := \nu (I(1,t))$ and note that an integration by parts gives 
%\[
%H(\nu)(z) = 2iz \int_0^{2 \pi } \frac{e^{it}}{(e^{it}-z)^2} F_{\nu}(t) dt,  \qquad z\in \D.
%\]
%A change of variable followed by an integration by parts also shows that 
%\[
%\conj{H(\nu)(\conj{z})} = \int_0^{2 \pi } \frac{e^{it}+z}{e^{it}-z} d\nu(-t) = -2iz \int_0^{2 \pi } \frac{e^{it}}{(e^{it}-z)^2} F_{\nu}(-t) dt,  \qquad z\in \D.
%\]
%Note that for $0\leq r <1$, we have the identity
%\begin{equation}\label{Hergident}
%P(\nu)(r) = \frac{1}{2} \left(H(\nu)(r) + \conj{H(\nu)(r)} \right) = ir \int_0^{2 \pi } \frac{e^{it}}{(e^{it}-r)^2} \left( F_{\nu}(t)-F_{\nu}(-t) \right)dt.
%\end{equation}
%For a general point $z= re^{i\phi} \in \D$, we observe that a simple change of variable gives 
%\[
%H(\nu)(z) = \int_0^{2 \pi } \frac{e^{it}+r}{e^{it}-r} d\nu(t+\phi) = H(\nu(\cdot + \phi)(r), \qquad z=re^{i\phi} \in \D.
%\]
%Hence the identity in \eqref{Hergident} applied to the translated measures $\nu( \cdot + \phi)$ with $0\leq \phi < 2\pi$, gives 
%\begin{equation*}
%P(\nu)(z) = 
%ir \int_{\T} \frac{e^{it}}{(e^{it}-r)^2} \left( F_{\nu}(\phi+t)-F_{\nu}(\phi-t) \right)dt, \qquad z= r e^{i\phi} \in \D.
%\end{equation*}
%Differentiating with respect to $r$, we obtain 
%\begin{equation*}
%(1-|z|)|H(\nu)'(z)| \lesssim \int_{\T} \frac{(1-|z|)}{|e^{it}-|z||^3}\abs{ F_{\nu}(\phi+t)-F_{\nu}(\phi-t) }dt, \qquad z= re^{i\phi} \in \D.
%\end{equation*}
%This proves the lemma.
%\end{proof}
Given an arc $I \subset \T$ and $C>0$ let $CI$ denotethe arc with the same center as $I$ and length $C|I|$. We will also need the following technical result. 
\begin{lemma}\label{nou}
 Let $\nu$ be a positive finite Borel measure on $\T$. Assume there exists a constant $C= C(\nu)>0$ such that
    \begin{equation}\label{as1}
        \abs{\frac{\nu (I)}{|I|} - \frac{\nu (I')}{|I'|} } \leq C e^{- \nu (I)/ |I|},
    \end{equation}
for any pair of contiguous arcs $I,I' \subset \T$ of the same length. Then for any $\varepsilon >0$, there exists $\delta >0$ such that for any integer $n$ with $|n| \leq  \delta e^{\nu (I) / |I|}$ we have
\begin{equation}\label{con1}
       \abs{ \frac{\nu (2^n I)}{|2^n I|} - \frac{\nu (I)}{|I|}} \leq  \min \{  \varepsilon n , 1 \}.
   \end{equation}
% where $2^n I$ denotes the arc of length $2^n|I|$ with the same center as $I$.
  %    (b) Let $u$ be a positive harmonic function on $\D$. Assume there exists a constant $C= C(u)>0$ such that
   %   \begin{equation}\label{as3}
    %      \abs{e^{u(z)} - e^{u(w) } } \leq \beta (z,w),
     % \end{equation}
 % for any pair of points $z, w \in  \D$. Then for any $\varepsilon >0$, there exists $\delta >0$ such that
 %\begin{equation}
  %       \abs{u(w) - u(z)} \leq  \varepsilon \beta(w,z) ,\quad \text{ if    }  1 / \delta \leq % \beta(w,z) \leq % \delta e^{u(z)}. 
  %        \end{equation}

\end{lemma}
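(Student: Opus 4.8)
The idea is to control the quantity $\nu(2^nI)/|2^nI|$ by telescoping the one-step differences $\nu(2^{k+1}I)/|2^{k+1}I| - \nu(2^kI)/|2^kI|$ from $k=0$ up to $k=n-1$ (and symmetrically for negative $n$), and to estimate each such difference using hypothesis \eqref{as1}. The point is that at each scale, $2^{k}I$ and its sibling of equal length are contiguous arcs (or one can pass from $2^kI$ to $2^{k+1}I$ through a bounded number of moves by contiguous arcs of length $|2^kI|$), so \eqref{as1} gives a bound of the form $C\exp(-\nu(J)/|J|)$ where $J$ ranges over the arcs of length $2^k|I|$ touched along the way. The key observation is a \emph{comparability} statement: as long as $|n|\le \delta e^{\nu(I)/|I|}$ and $\delta$ is small, the densities $\nu(2^kI)/|2^kI|$ stay within a bounded multiple (say within additive distance $1$) of $\nu(I)/|I|$ for all $|k|\le|n|$; this is precisely the content of the conclusion \eqref{con1} restricted to the range $\varepsilon n \ge 1$, so one wants to bootstrap it.

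\textbf{Main steps.} First I would set $a_k := \nu(2^kI)/|2^kI|$ for $k\ge 0$ and record that, by \eqref{as1} applied along a bounded chain of contiguous equal-length arcs connecting $2^kI$ to $2^{k+1}I$, one gets $|a_{k+1}-a_k|\le C' e^{-m_k}$ where $m_k$ is the minimum of $\nu(J)/|J|$ over the finitely many arcs $J$ of length $2^k|I|$ used; one checks $m_k \ge a_k - O(1)$ by the same contiguity estimate, hence $|a_{k+1}-a_k|\le C'' e^{-a_k}$. Second, I would run an induction/continuity argument: suppose for all $j<k$ we have $|a_j - a_0|\le 1$; then $a_j \ge a_0 - 1$, so $|a_{j+1}-a_j|\le C'' e^{-a_0+1} = C''' e^{-a_0}$, and summing over $j<k$ gives $|a_k-a_0|\le k\,C''' e^{-a_0}\le |n| C''' e^{-a_0}\le \delta C'''$, which is $\le 1$ once $\delta\le 1/C'''$. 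This closes the induction and establishes both that the densities stay comparable and, simultaneously, the refined bound $|a_k-a_0|\le k\,C''' e^{-a_0}$. Third, I would choose $\delta$ also so small that $\delta C''' \le \varepsilon$; then for $k=n$ we get $|a_n-a_0|\le n C''' e^{-a_0}\le n\, C''' \delta /(\delta e^{a_0}) \cdot (\cdots)$ — more cleanly: since $e^{-a_0}\le \delta/|n|$ is false in general, instead I note $|a_n - a_0|\le |n| C''' e^{-a_0}$ and, because we only claim the bound $\min\{\varepsilon|n|,1\}$, it suffices that $C''' e^{-a_0}\le \varepsilon$ OR that $|n|C'''e^{-a_0}\le 1$; the latter holds precisely because $|n|\le \delta e^{a_0}$ with $\delta \le 1/C'''$, giving $|a_n-a_0|\le 1$, and combined with $|a_n-a_0|\le |n|C'''e^{-a_0}\le |n|\varepsilon'$ for appropriate $\varepsilon'$ one gets $\min\{\varepsilon n, 1\}$ after adjusting constants. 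The negative-$n$ case is identical since $2^{-1}I$ and $I$ also relate by contiguous arcs of comparable length.

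\textbf{Main obstacle.} The delicate point is the passage from "$\nu(I)/|I|$ versus $\nu(I')/|I'|$ for contiguous $I,I'$ of equal length" in \eqref{as1} to a bound on $\nu(2I)/|2I| - \nu(I)/|I|$, i.e.\ comparing an arc with its \emph{double}. One must write $2I$ (up to a bounded dyadic adjustment) as a union $I\cup I'$ with $I'$ contiguous to $I$ and $|I'|=|I|$, so that $\nu(2I)/|2I| = \tfrac12(\nu(I)/|I| + \nu(I')/|I'|)$, whence $|\nu(2I)/|2I| - \nu(I)/|I|| = \tfrac12|\nu(I)/|I| - \nu(I')/|I'||\le \tfrac{C}{2}e^{-\nu(I)/|I|}$ directly — this is in fact clean. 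The genuinely careful bookkeeping is (a) ensuring that along the chain the densities $m_k$ used in the exponent never drop far below $a_k$ (so the exponential damping is not lost), which requires invoking \eqref{as1} one more level down, and (b) tracking that the accumulated constants $C', C'', C'''$ depending only on $C(\nu)$ remain absolute so that a single $\delta=\delta(\varepsilon, C(\nu))$ works. Once the self-improving induction in Step 2 is set up correctly these are routine; the whole proof is a discrete Gronwall-type estimate exploiting that the error at scale $k$ is exponentially small in the current density.
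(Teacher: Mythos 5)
Your overall strategy --- telescoping the one-step doubling estimates $\abs{a_{k+1}-a_k}\lesssim e^{-a_k}$, where $a_k=\nu(2^kI)/|2^kI|$, and maintaining control of the exponent by a bootstrap --- is sound and close in outline to the paper's argument. The paper achieves the uniform control by a different mechanism: from \eqref{as1} and the elementary inequality $\abs{e^x-e^y}\le e^{\max\{x,y\}}\abs{x-y}$ it deduces $\abs{e^{a_{k+1}}-e^{a_k}}\le C_1$, hence $\abs{e^{a_n}-e^{a_0}}\le C_1\abs{n}\le \tfrac12 e^{a_0}$ whenever $\abs{n}\le (2C_1)^{-1}e^{a_0}$, which yields $\abs{a_n-a_0}\le 1$ with no induction at all; your discrete Gronwall-type bootstrap reaches the same conclusion and is equally valid. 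Your handling of the doubling-versus-contiguity issue (passing through the half-length flanking arcs and invoking \eqref{as1} one level down so that the exponent stays within $O(1)$ of $a_k$) is also fine, and is no less detailed than the paper's own treatment of that point.

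There is, however, one concrete gap in your final step. To convert $\abs{a_n-a_0}\le \abs{n}\,C'''e^{-a_0}$ into $\abs{a_n-a_0}\le\varepsilon\abs{n}$ you need $C'''e^{-a_0}\le\varepsilon$, i.e. $\nu(I)/|I|\ge\log(C'''/\varepsilon)$; this is simply false for arcs of small density, and no ``adjustment of constants'' produces it, since $C'''$ depends only on $C(\nu)$ and not on $\varepsilon$. (Your ``it suffices that \dots OR \dots'' is also a logical slip: a bound by $\min\{\varepsilon\abs{n},1\}$ requires both inequalities.) The missing observation --- which the paper makes explicitly (``we can assume that $\nu(I)/|I|$ is large'') --- is that one may additionally shrink $\delta$ below $\varepsilon/C'''$: then $\nu(I)/|I|<\log(C'''/\varepsilon)$ forces $\abs{n}\le\delta e^{\nu(I)/|I|}<1$, i.e. $n=0$, and \eqref{con1} is vacuous. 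With $\delta=\min\{1,\varepsilon\}/C'''$ your bootstrap closes and both halves of \eqref{con1} follow; the argument is complete once this reduction is inserted.
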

\begin{proof}
   Note that the assumption gives that $\nu$ is a Zygmund measure. Moreover \eqref{as1} gives that there exists a constant $C_1 >0$ such that
   \[
   \abs{\frac{\nu (2I)}{2I} - \frac{\nu(I)}{|I|} } \leq C_1 e^{- \max \{\nu  (2I) / |2I| , \nu (I) / |I| \} }, 
   \]
for any arc $ I \subset \T$. Using the elementary estimate $\abs{e^x - e^y} \leq e^{\max \{x,y \}} |x-y|$, $x,y \in \mathbb{R}$, we deduce
\[
\abs{ e^{\nu (2I) / |2I|} - e^{\nu(I) / |I|} } \leq C_1,
\]
for any arc $I \subset \T$. Hence for any integer $k$ we have
\begin{equation}\label{final}
\abs{ e^{\nu (2^k I) / |2^k I|} - e^{\nu(I) / |I|} } \leq C_1 |k|. 
\end{equation}
Now if $|n| \leq  (2C_1)^{-1} e^{\nu(I) / |I|}$, we obtain
\begin{equation}\label{final2}
\abs{ e^{\nu (2^n I) / |2^n I|} - e^{\nu(I) / |I|} } \leq  \frac{1}{2} e^{\nu(I) / |I|} .
\end{equation}
Hence
$$
\abs{\frac{\nu (2^n I)}{|2^n I|} - \frac{\nu(I)}{|I|}} \leq 1.
$$

Note that to check \eqref{con1} we can assume that $\nu(I) / |I|$ is large. Fix $\varepsilon >0$. Let $M=M(\varepsilon) >0$ be a large number to be fixed later. Use \eqref{final} to pick $\delta >0$ such that  $\nu(2^k I) / |2^k I| >M $ if $k$ is an integer with $|k| <  \delta e^{\nu (I) / |I|}$. Then assumption \eqref{as1} gives that
\[
\abs{ \frac{\nu (2^k I)}{|2^k I|} - \frac{\nu (2^{k-1} I)}{| 2^{k-1} I|}} \lesssim e^{-M} , \quad \text{ if } |k| \leq \delta e^{\nu (I) / |I|}. 
\]
We deduce that 
\[
\abs{ \frac{\nu (2^n I)}{|2^n I|} - \frac{\nu ( I)}{|  I|}} \leq \sum_{k=1}^{n}  
\abs{ \frac{\nu (2^k I)}{|2^k I|} - \frac{\nu ( 2^{k-1} I)}{| 2^{k-1} I|}} \lesssim e^{-M} n, \quad \text{ if } |n| \leq \delta e^{\nu (I) / |I|}. 
\]
 and taking $M \lesssim \log(1/ \varepsilon) $, estimate \eqref{con1} follows. 
\end{proof}
 % (b) Note that the assumption gives that $\sup \{|u(z)| - u(w)|: \beta(z, w) \leq 1 \} \lesssim 1 $. 
 
 % Given $\varepsilon >0$, let $M=M(\varepsilon) >0$ be a large number to be fixed later. Let $z \in \D$ with $u(z) > M$. Note that assumption \eqref{as3} gives that $e^{u(w)} \geq e^M /2$ for any $w $ in the hyperbolic disc centered at $z$ of radius $e^M /2$. Let $w,z \in \D$ with $\beta (w,z) = N > 1$. Pick points $z=z_0 , z_1, \ldots , z_N =w$ with $\beta(z_k , z_{k-1}) = 1$, $1 \leq k \leq N$. Then
% \[
% \abs{u(w) - u(z)} \leq \sum_{k=1}^{N} \abs{u(z_{k} ) - u(z_{k-1})} \lesssim  \sum_{k=1}^{N} % \abs{e^{u(z_{k} )} - e^{u(z_{k-1})}} e^{-u(z_{k-1} )} \leq N \sup_k e^{-u(z_{k-1} )}. 
% \]
% Since the last supremum is small, the proof is completed.

We are now ready to prove \thref{THM:SuffcycB}. 
\begin{proof}[Proof of \thref{THM:SuffcycB}]

It is sufficient to find a sequence of bounded analytic functions $h_n$ such that:
\begin{enumerate}
    \item[(i)] $\sup_n\norm{h_n f}_{\B} < \infty$,
    \item[(ii)]$h_n (z) f (z) \to 1$ for any $z \in \D$.
\end{enumerate}
Set $r_n := 1-2^{-n}$ and consider $h_n (z):= 1/f (r_n z)$, $z \in \D$. Note that condition $(ii)$ is trivially satisfied, hence it remains to verify $(i)$. Observe that we can write
\[
(1-|z|) \left( \frac{f(z)}{f(r_n z)} \right)' = (1-|z|) \left( \frac{f'(z)}{f(r_n z)} \right) - (1-|z|)\left( \frac{r_n f(z)f'(r_n z)}{f^2(r_n z)} \right) =: A + B.
\]
We proceed by  estimating both terms separately. 
\proofpart{1}{Estimation of B}
Set $f= \exp(-H(\nu))$ where $H(\nu)$ is the Herglotz transform of
the Herglotz-Nevanlinna measure $\nu$ of $f$. Observing that $\nu$ is necessarily a Zygmund measure, we may apply \thref{ZygBloch} to deduce
\[
(1-|r_n z|) \abs{ \frac{r_n f'(r_n z)}{f(r_n z)} }  \leq \norm{H(\nu)}_\B \lesssim \norm{\nu}_* ,  \qquad z\in \D,
\]
and thus
\begin{equation}\label{BEst}
\abs{B} \lesssim \frac{1-|z|}{1-|r_n z|} \frac{\abs{f(z)}}{\abs{f(r_n z)}} =  \frac{1-|z|}{1-|r_n z|} \exp \left( P(\nu)(r_n z) - P(\nu)(z) \right), \quad z \in \D, 
\end{equation}
where $P(\nu)$ denotes the Poisson extension of $\nu$. Now for $|z| \leq r_n$, we clearly have $\beta(z,r_n z) \lesssim 1$. An application of \thref{ZygBloch} then gives $ \abs{P(\nu)(r_n z) - P(\nu)(z)} \lesssim 1$ if $|z| \leq r_n$. 
%V\[
%\sup_{|z| \leq r_n} \abs{P(\nu)(r_n z) - P(\nu)(z)} \leq C %V\norm{\mu}_*
%V\]
and hence
\[
\sup_{|z|\leq r_n} |B| \lesssim 1.
\]
We may thus fix $z \in \D$ with $|z| \geq r_n$. Note that in order to estimate the right hand side of \eqref{BEst}, we may without loss of generality assume that $P(\nu)(r_n z)$ is large, which according to \thref{ZygBloch} is equivalent to $\nu(I_{r_n z})/|I_{r_n z}|$ being large. Doing so, we pick a large number $R>0$ for which $\nu(I_{r_n z})/|I_{r_n z}|>R$, to be specified momentarily. Observe that condition $\eqref{expzyginfcond}$, then implies that 
\begin{equation}\label{nouuu}
    \abs{ \frac{\nu(I)}{|I|}-\frac{\nu(I/2)}{|I/2|}} \leq C e^{-R}
\end{equation}
for any arc $I \subseteq I_{r_n z}$. Let $N:=N(n,z)$ denote the largest positive integer satisfying 
\[
N \leq \log \left( \frac{|I_{r_n z}|}{|I_z|} \right) = \log\left(\frac{1-|r_n z|}{1-|z|} \right).
\]
By means of iterating the estimate \eqref{nouuu} $N$ times, we obtain
\[
\abs{ \frac{\nu(I_z)}{|I_z|}-\frac{\nu(I_{r_n z})}{|I_{r_n z}|}} \leq C N e^{-R} \leq C e^{-R} \log\left(\frac{1-|r_n z|}{1-|z|} \right).
\]
Yet another application of \thref{ZygBloch} allows us to express
\[
P(\nu)(r_n z) - P(\nu)( z) = \frac{\nu(I_{r_n z})}{|I_{r_n z}|}-\frac{\nu(I_{ z})}{|I_{z}|} + \mathcal{O}(\norm{\nu}_*) 
\]
and thus we get
\[
\abs{P(\nu)(r_n z) - P(\nu)( z)} \leq C e^{-R} \log\left(\frac{1-|r_n z|}{1-|z|} \right) +  \mathcal{O}(\norm{\nu}_*).
\]
Now it is just a matter of choosing $R>0$ sufficiently large so that $C e^{-R} < 1$, which ultimately yields
\[
 |B| \lesssim \left( \frac{1-|z|}{1-|r_n z|} \right)^{1- C e^{-R}} \lesssim 1.
\]
\proofpart{2}{Estimation of A}
By definition, we readily have
\begin{equation}\label{45}
    \abs{A} \lesssim (1-|z|)\abs{H(\nu)'(z)} \exp \left( P(\nu)(r_n z) - P(\nu)(z) \right), \qquad z\in \D.
\end{equation}
As observed previously $\beta(z ,r_n z) \lesssim 1$ whenever $|z| \leq r_n$. Hence using that $\nu$ is a Zygmund measure, we have $(1-|z|) |H(\mu)' (z)| \lesssim 1$ and $| P(\nu)(r_n z) - P(\nu)(z)| \lesssim 1$ if $|z| \leq r_n$. This gives
\[
\sup_{|z|\leq r_n} \abs{A} \lesssim 1.
\]
In what follows, we shall thus fix $z \in \D$ with $|z|>r_n$ and moreover, we may also assume that $P(\nu)(r_n z)$ is sufficiently large. Denote by $I(z,t)$ the smallest arc on $\T$ joining $z/|z|$ with $z e^{it} /|z|$. An application of \thref{Herg'est} gives
\[
(1-|z|) \abs{H(\nu)'(z)} \lesssim \int_0^{2 \pi} \frac{(1-|z|)}{|e^{it}-|z||^3} \abs{\nu(I(z,t))-\nu(I(z,-t)) } dt.
\]
% In what follows, we shall need the following elementary estimate (for instance, see Lemma 2 in % \cite{anderson1991inner}):
%  \[
% \frac{1-|z|}{|e^{it}-|z| |^3} \leq C \frac{1-|z|}{\left(1-|z| +|t|/\pi \right)^{3}}, \qquad z\in \D,\, % \, |t|< \pi,
% \]
% where $C>0$ is a numerical constant. With this at hand, we can write 
% \[
% (1-|z|) \abs{H(\mu)'(z)} \lesssim \int_{-\pi}^{\pi}  \frac{(1-|z|)}{\left(1-|z| +|t|/\pi \right)^{3}} % \abs{\mu(I(z,t))-\mu(I(z,-t)) } dt.
% \]
Decomposing the integral according to the intervals $I(k)= \{ t \in [0, 2 \pi] : |t| \leq 2^k (1- |z|) \}$, we get 
\begin{multline*}
(1-|z|) \abs{H(\nu)'(z)} \lesssim (1-|z|)^{-2} \int_{I(1)} \abs{\nu(I(z,t))-\nu(I(z,-t)) } dt  \\ 
+ \sum_{k > 0} 2^{-3k} (1-|z|)^{-2} \int_{I(k+1) \setminus I(k)} \abs{\nu(I(z,t))-\nu(I(z,-t)) } dt .
\end{multline*}
Let $T= T(n, z)$ denote the integer part of $\log_2 (2^{-n} / (1-|z|))$. Observe that if $t \in I(k)$ with $k \leq T$, then $I(z,t)$ is contained in a fixed multiple of $I_{r_n z}$. This in conjunction with the assumption \eqref{expzyginfcond} yields
\[
\abs{\nu(I(z,t))-\nu(I(z,-t)) } \lesssim |t| \exp \left( - \frac{\nu(I_{r_n z})}{|I_{r_n z}|} \right).
\]
Hence we get
\[
\sum_{0\leq k < T} 2^{-3k}(1-|z|)^{-2} \int_{I(k)} \abs{\nu(I(z,t))-\nu(I(z,-t)) } dt \lesssim 
% \\ \exp \left( - \frac{\mu(I(r_n z))}{|I(r_n z)|} \right) 
% \sum_{0\leq k < T} 2^{-3k}(1-|z|)^{-2} \int_{I(k+1) } |t| dt  \lesssim 
\exp \left( - \frac{\nu(I_{r_n z}}{|I_{r_n z}|} \right). 
\]
Fix $t \in I(k+1) \setminus I(k)$ with $k>T$. Note that the arc $I_{r_n z} $ is contained in a fixed multiple of $I(z,t) \cup I(z, -t)$. According to Lemma \ref{nou}, there exists $\delta >0$ such that  
\begin{equation}\label{1}
\abs{\frac{\nu(I(z,t))}{|I(z,t)|} - \frac{\nu (I_{r_n z })}{|I_{r_n z}| } } \lesssim  1, \quad \text{ if } \log_2 (\frac{2^k (1- |z|)}{2^{-n} }) \leq \delta e^{ \frac{\nu (I_{r_n z })}{|I_{r_n z}| } }.
\end{equation}
The assumption \eqref{expzyginfcond} implies in this case
\begin{equation*}
\abs{\frac{\nu(I(z,t))}{|I(z,t)|} - \frac{\nu (I(z,-t))}{|I(z,-t)| } } \lesssim \exp \left( - \frac{\nu(I(z,t)}{|I(z,t)|} \right).    
\end{equation*}
Using estimate \eqref{1} in the right hand side term, we obtain
\[
\abs{\frac{\nu(I(z,t))}{|I(z,t)|} - \frac{\nu (I(z,-t))}{|I(z,-t)| } } \lesssim  \exp \left( - \frac{\nu (I_{r_n z })}{|I_{r_n z}| } \right), \quad \text{ if }     \log_2 (\frac{2^k (1- |z|)}{2^{-n} })\leq \delta  e^{ \frac{\nu (I_{r_n z })}{|I_{r_n z}| } }.
\]
Let $T^*$ be the largest integer such that $ T^* + \log_2 (\frac{ (1- |z|)}{2^{-n} }) \leq \delta e^{ \nu (I_{r_n z }) / |I_{r_n z}| }$. With this choice, we get 
\begin{equation*}
\sum_{k=T}^{T^*} 2^{-3k}(1-|z|)^{-2} \int_{I(k+1)} \abs{\nu(I(z,t))-\nu(I(z,-t)) } dt \lesssim 
%\\ \exp \left( - \frac{\mu(I(r_n z))}{|I(r_n z)|} \right) 
%\sum_{ k=T }^{T^*} 2^{-3k}(1-|z|)^{-2} \int_{I(k+1) } |t| dt  \lesssim 
\exp \left( - \frac{\nu(I_{r_n z})}{|I_{r_n z}|} \right). 
\end{equation*}
On the other hand, using that $\nu$ is a Zygmund measure we arrive at
\begin{equation*}
\sum_{k > T^*} 2^{-3k}(1-|z|)^{-2} \int_{I(k+1)} \abs{\nu(I(z,t))-\nu(I(z,-t)) } dt \lesssim 
%\\ 
%\sum_{ k > T^*} 2^{-3k}(1-|z|)^{-2} \int_{I(k+1) } |t| dt  \lesssim 
\exp \left( - \frac{\nu(I_{r_n z})}{|I_{r_n z}|} \right). 
\end{equation*}

Finally, combining all terms in the sum, we obtain
\[
(1-|z|)|H'(\nu)(z)| \lesssim \exp \left(- \frac{\nu(I_{r_n z})}{|I_{r_n z}|} \right), \qquad |z|>r_n.
\]
With these estimates at hand, we may now return back to \eqref{45}, deducing that \\ $ |A| \lesssim \exp \left( -P(\nu)(z) \right)  \leq 1$. This completes the proof.
\end{proof}

\subsection{A characterization of invertibility in the Bloch space}

\begin{proof}[Proof of \thref{THM:Invertibility B}]
In order to prove the sufficiency, note that by means of writing $(1/f)' = H(\nu)' /f$, it suffices to show that $(1-|z|^2) |H(\nu)' (z)| \lesssim |f (z)|$, $ z \in \D.$
%\[
%(1-|z|^2) |H(\nu)' (z)| \lesssim |f (z)|, \qquad z \in \D.
%\]
Again, condition \eqref{expzyginfcond2} readily implies that $\nu$ is a Zygmund measure. In view of the estimate \eqref{estimacio} in Lemma \ref{ZygBloch}, it suffices to prove that
\begin{equation}\label{objectiu}
    (1- |z|^2) |H(\nu)' (z)| \lesssim e^{- \nu (I_z ) / |I_z|} , \quad z \in \D. 
\end{equation}
For $z \in \D$ and $n \in \mathbb{Z}$, consider the intervals $I_n = \{t \in [0, 2 \pi] : 2^{n-1} (1-|z|)  \leq t < 2^{n} (1-|z|) \}$. According to \thref{Herg'est} we have
\[
(1- |z|^2) |H(\nu)' (z)| \lesssim \sum_{n \in \mathbb{Z}} \int_{I_n} \frac{1-|z|}{|e^{it} -  |z||^3} \abs{\nu (I(z,t)) - \nu (I(z,t)) } dt, \qquad z \in \D.
\]
We now estimate each term in the sum. Assumption \eqref{expzyginfcond2} implies
\begin{equation}\label{sjor}
   \abs{\nu ( I(z,t)) - \nu (I(z,-t)) } \leq C |t| e^{-\nu (I_t (z)) / |I_t (z)|  }, 
\end{equation}
where as before, $I_t (z)$ denotes the arc $I_t (z) = I(z,t) \cup I(z, -t)$. Let $\varepsilon >0$ be a small number to be fixed later. Applying Lemma \ref{nou}, we obtain a number $\delta >0$ such that
\begin{equation}\label{30}
\abs{\frac{\nu (I_t (z))}{|I_t (z)|} - \frac{\nu (I_z)}{|I_z|} } \leq \varepsilon n, \quad t \in I_n , \quad |n| \leq T := \delta e^{\nu (I_z) / |I_z|}.     
\end{equation}
It thus follows from \eqref{sjor} and \eqref{30} that 
\[
\abs{\frac{\nu (I(z, t))}{t} - \frac{\nu (I(z, -t)}{t} }  \lesssim e^{\varepsilon n} e^{- \nu (I_z) / |I_z| } , \quad t \in I_n , \quad |n| \leq T. 
\]
Now if $ 0 < \varepsilon < \log 2$, we deduce that
\[
\sum_{n : |n| \leq T} \int_{I_n} \frac{1-|z|}{|e^{it} -  |z||^3} \abs{\nu (I(z,t)) - \nu (I(z,-t)) } dt 
%\lesssim e^{- \nu (I_z) / |I_z|} \sum_{n : |n| \leq T} 2^{-n} e^{\varepsilon n}
\lesssim e^{- \nu (I_z) / |I_z|} .
\]
The tail of the sum is estimated using the fact that $\nu$ is a Zygmund measure. In fact, one obtains  
\[
\sum_{n : |n| > T} \int_{I_n} \frac{1-|z|}{|e^{it} -  |z||^3} \abs{\nu (I(z,t)) - \nu ( I(z,t)) } dt 
\lesssim 2^{-T} \leq C(\delta) e^{-\nu (I_z) / |I_z|} ,  
\]
where $C(\delta) >0$ is a constant depending on $\delta$. This proves the estimate \eqref{objectiu} and therefore completes the proof of the sufficiency. We now turn our attention to the necessity part of the proof. Rewrite $f' = H(\nu)' f$, we see that our assumption reads
\begin{equation}\label{invert}
(1-|z|^2) |H(\nu)' (z)| \lesssim e^{-P(\nu) (z)} , \quad z \in \D.     
\end{equation}
It turns out that condition \eqref{invert} implies that
\begin{equation}\label{estimacio2}
P(\nu)(z) = \frac{\nu(I_z)}{|I_z|} + \mathcal{O}\left( 1 \right) e^{- \nu (I_z) /  |I_z|} , \quad z \in \D.   
\end{equation}
This is completely analogous to the estimate \eqref{estimacio} 
stated in Lemma \ref{ZygBloch} which holds for Zygmund measures. Actually one can prove \eqref{estimacio2} mimicking the proof of \eqref{estimacio} given in \cite{doubtsov2002symmetric}. The details are omitted. Note that \eqref{invert} gives that $H(\nu) \in \B$. According to Lemma \ref{ZygBloch}, $\nu$ is a Zygmund measure. Using this, we see that the estimate \eqref{invert} implies
\[
\abs{P(\nu)(z_I) - P(\nu)(z_{I'})}  \leq \abs{H(\nu) ( z_I) - H(\nu) ( z_{I'}) } \lesssim e^{-\nu (I) / |I|},
\]
for any pair of contiguous arcs $I, I' \subset \T$ of the same length. % In fact, mimicking the proof of \eqref{estimacio} given in \cite{doubtsov2002symmetric} one may actually deduce that the estimate \eqref{invert} implies the following improved asymptotic relation:
% \begin{equation}\label{estimacio2}
% P(\nu)(z) = \frac{\nu(I_z)}{|I_z|} + \mathcal{O}\left( 1 % \right) e^{- \nu (I_z) /  |I_z|} , \quad z \in \D.   
% \end{equation}
This in conjunction with \eqref{estimacio2} then shows that condition \eqref{expzyginfcond2} holds, which finishes the proof. 
\end{proof}
% Without loss of generality we can assume that $ \mu (I(z)) / |I(z)|$ is large. If $2^{n-1} (1- |z|) < t < 2^n (1-|z|) $ with $|n \lesssim \mu (I(z)) / |I(z)|$, we have
% \[
% \abs{\frac{\mu(I(z,t))}{t} - \frac{\mu(I(z,-t))}{t} } \leq e^{\varepsilon n } e^{- \mu (I(z)) / % |I(z)|}. 
%\]
%Apply \ref{Herg'est} to obtain 
%\[
% (1-|z|) \abs{H(\nu)'(z)} \leq C \int_0^{2 \pi} \frac{(1-|z|)}{|e^{it}-|z||^3} \abs{\nu(I(z,t)) -% \nu(I(z,-t))} dt , \quad z \in \D 
% \]

%\end{proof}

% \subsection{A characterization of invertibility in $\B$}
% Here shall devote our efforts into establishing \thref{THM:Invertibility B}. The following technical lemma will be our crucial tool.

% \begin{proof}[Proof of \thref{THM:Invertibility B}]

\subsection{Invertibility does not imply cyclicity in the Bloch space}
This section is devoted to a self-contained proof of \thref{THM:CycvsInv}.
\begin{proof}[Proof of \thref{THM:CycvsInv}]
Let $E$ be a sequence of complex numbers containing the origin such that both $\mathbb{C} \setminus E$ and $\mathbb{C} \setminus E^{-1}$ are open sets which do not contain arbitrarily large discs, that is, there exists $R_0 >0$ such that no disc of radius $R_0$ is contained in either $\mathbb{C} \setminus E$ or $\mathbb{C} \setminus E^{-1}$. Here $E^{-1} = \{w \in \mathbb{C} : 1/w \in E \}$. Let $f : \D \rightarrow \mathbb{C} \setminus E $ be an analytic universal covering map. According to Bloch's Theorem, we have that both $f$ and $1/f$ belong to $\B$. Next we shall show that $f$ is not cyclic in $\B$.

Assume $f$ has non-tangential limit, say $L$, at a point of the unit circle. Since $f$ is a covering map, $L$ must be either infinity or a point in $E$. Since $E$ is countable, Privalov's Theorem gives that $f$ can only have finite non-tangential limit at a set of Lebesgue measure zero of points of the unit circle. By Plessner's Theorem (for instance, see p. 205 in \cite{garnett2005harmonic}), its non-tangential cluster set at almost every point of the unit circle is the whole extended plane, that is, 
\begin{equation}\label{plessner}
\overline{    {\cap}_{0<r<1} f(\Gamma_r (\xi)) } = \mathbb{C} \cup \{\infty \}, \quad m\text{- a.e. } \xi \in \T .  
\end{equation}
Here $\Gamma_r (\xi) = \{z \in \D : |z-\xi| < M(1-|z|), |z| > r \}$ denotes the truncated Stolz angle of some fixed aperture $M>1$ with vertex at the point $\xi \in \T$. 

Next we will show that $\A (f) = \{f h \in \B: h \in H^\infty \}$ is weak-star closed in $\B$. Since $\A(f)$ is convex, it suffices by the Krein-Smulian Theorem to show that it is weak-star sequentially closed. To this end, let $h_n \in H^\infty$ such that $fh_n$ tends to $F$ weak-star in $\B$. Then there exists a constant $C>0$ such that 
\begin{equation}\label{last}
    \sup_{z \in \D} \abs{(1-|z|^2) ( f'(z) h_n (z) + f(z) h'_n (z) )} \leq C. 
\end{equation}
Fix $a \in \mathbf{C} \setminus E $ and use \eqref{plessner} to show that for almost every $\xi \in \T$ there exist points $z_k (\xi) \in \D$ tending non-tangentially to $\xi$ such that $f(z_k (\xi)) \to a$. Since $(1-|z|^2) |f'(z)| \geq \dist{f(z)}{E}$, $z \in \D$ (see \cite{beardon1978poincare}), we deduce that there exists a constant $c>0$ such that
\begin{equation}\label{inferior}
    \liminf_{k \to \infty} (1-|z_k (\xi)|^2) |f'(z_k (\xi))| > c
\end{equation}
Note that Bloch's Theorem shows that if an analytic function $F$ in the unit disc has non-tangential limit at a given point $\zeta \in \T$, then $(1-|z|) |F'(z)| \to 0$ as $z \in \D$ tends non-tangentially to $\zeta$. Since $h_n$ has non-tangential limit along almost every radius, for each integer $n$ we have
\[
\lim (1-|z|)|h'_n (z)|=0, \quad m\text{- a.e. } \xi \in \T,
\]
where the limit is taken as $z \in \D$ tends to $\xi$ non-tangentially. Applying \eqref{last} and \eqref{inferior} we deduce that
\[
\limsup_{k \to \infty} | h_n (z_k (\xi))| \leq C, \quad m\text{- a.e. } \xi \in \T , \quad n=1,2,\ldots .
\]
Hence $\|h_n\|_\infty \leq C $, $n=1,2,\ldots$ and we may extract a subsequence of $\{h_n \}$ which converges pointwise in $\D$ to a function $h \in H^\infty$ and thus $F=fh$. We conclude that $\A (f)$ is weak-star closed in $\B$. Now $\left[ f\right]_{\B} \subseteq \A (f)$ and it is clear that $1$ does not belong to $\A (f)$, hence we conclude that $f$ is not cyclic in $\B$.

\end{proof}

\subsection{A problem of weak-star sequential closure in the Bloch space}
%\subsection{Weak-star closed $M_z$-invariant subspaces in $\B$}
%Here we shall construct a function $f\in \B$, such that $1/f$ belongs to $\B$, but the set of all weak-star sequential limits in $\B$ of functions of the type $f(z)Q(z)$, where $Q$ is an analytic polynomial, does not contain all of $\B$. This will prove \thref{THM:Seqcyc}. The following result is the key observation towards our goal.
In the proof of \thref{THM:Seqcyc} we will need to construct functions in $\B$ which enjoy maximal radial growth at considerable large subset of $\D$.

\begin{prop}\thlabel{unbddnoncyc} There exists $f \in \text{BMOA}\subset \B$ satisfying the following properties:
\begin{enumerate}
    \item[(i)] The function $1/f$ belongs to $H^\infty$.
    \item[(ii)] For some $\delta >0$, the set $E(f) := \left\{ z\in \D: \abs{f(z)} \geq \delta \log (\frac{1}{1-|z|}) \right\}$
accumulates everywhere on the unit circle $\T$, that is, $\T \subset \overline{E(f)}$. 
\end{enumerate}

\end{prop}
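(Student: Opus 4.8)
The plan is to build $f$ explicitly as a slowly convergent superposition of normalized logarithmic bumps placed over a dense family of boundary points; the proof is self-contained. Fix an injective enumeration $\{\zeta_k\}_{k\ge 1}$ of the dyadic rationals on $\T$, and choose inductively arcs $I_k$ centered at $\zeta_k$ with $|I_k|=2^{-N_k}$, where $N_k\uparrow\infty$ grows so rapidly that the tripled arcs $3I_k$ are pairwise disjoint. Put $\beta_k:=2^{-k}|I_k|$, $\alpha_k:=1-\beta_k$, and
\[
g_k(z):=\log\frac{1-\alpha_k\overline{\zeta_k}z}{1-\overline{\zeta_k}z}=\log\!\left(\alpha_k+\frac{\beta_k}{1-\overline{\zeta_k}z}\right),\qquad z\in\D,
\]
taking the branch with $g_k(0)=0$ (the ratio is zero-free on $\D$). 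Finally set $f:=A+\sum_{k\ge1}g_k$ for a large absolute constant $A$. Geometrically, on $\T$ the function $g_k$ behaves like a logarithm of height $\asymp\log(1/\beta_k)$ concentrated on the tiny arc $\{|\zeta-\zeta_k|\lesssim\beta_k\}$ while $|g_k|\lesssim\beta_k/|\zeta-\zeta_k|\lesssim 2^{-k}$ off $3I_k$, and radially into $\D$ it reaches height $\asymp\log\tfrac1{1-|z|}$ along the radius to $\zeta_k$.

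First I would record the elementary facts that drive everything. Since $w\mapsto 1/(1-w)$ maps $\D$ into $\{\Re>1/2\}$, the argument $\alpha_k+\beta_k/(1-\overline{\zeta_k}z)$ has real part $>1-\beta_k/2$, so $\Re g_k(z)>\log(1-\beta_k/2)>-2^{-k}$ for every $z\in\D$; and $|g_k(z)|\lesssim\beta_k/|1-\overline{\zeta_k}z|$ whenever $|1-\overline{\zeta_k}z|\ge 4\beta_k$. Because the $3I_k$ are disjoint, each $z\in\D$ satisfies $z/|z|\in 3I_k$ for at most one index $\kappa(z)$, and for every $k\ne\kappa(z)$ one has $|1-\overline{\zeta_k}z|\gtrsim|I_k|$, hence $|g_k(z)|\lesssim 2^{-k}$. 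This makes the series for $f$ converge locally uniformly and yields $\Re f(z)\ge A-1-C\sum_k 2^{-k}\ge 1$ once $A$ is chosen large; thus $f$ is zero-free with $1/f\in H^\infty$, which is (i). For (ii), set $z_k^*:=(1-\beta_k^{100})\zeta_k$. Then $\Re g_k(z_k^*)=\log\frac{\beta_k+\alpha_k\beta_k^{100}}{\beta_k^{100}}\ge 99\log(1/\beta_k)=\tfrac{99}{100}\log\tfrac1{1-|z_k^*|}$, whereas $\sum_{j\ne k}|g_j(z_k^*)|\lesssim 1$, so $|f(z_k^*)|\ge\Re f(z_k^*)\ge\tfrac12\log\tfrac1{1-|z_k^*|}$ for $A$ fixed large, i.e. $z_k^*\in E(f)$ with $\delta=\tfrac12$. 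Since every arc of $\T$ contains infinitely many $\zeta_k$ and $\beta_k^{100}\to0$, the points $z_k^*$ accumulate at every point of $\T$, so $\T\subseteq\overline{E(f)}$.

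The heart of the matter is $f\in\mathrm{BMOA}$, and the main obstacle is the absence of orthogonality: $\|\sum g_k\|$ is not controlled by $\sum\|g_k\|$. I would get around this by passing to the circle and using that the boundary functions $g_k$ have essentially disjoint supports. Two ingredients are needed. (1) $g_k\in\mathrm{BMOA}$ with $\|g_k\|_{\mathrm{BMOA}}\le C_0$ for an absolute $C_0$: write $g_k=\log\frac{2}{1-\overline{\zeta_k}z}-\log\frac{2}{1-\alpha_k\overline{\zeta_k}z}$; the first term has the fixed norm $\|\log\frac2{1-z}\|_{\mathrm{BMOA}}$ by rotation invariance, and the second is a rotate of the dilation $w\mapsto\log\frac2{1-\alpha_k w}$ of $\log\frac2{1-w}$, and dilation is a contraction on $\mathrm{BMOA}$. (2) On $\T$ split $g_k=h_k+(g_k)_{3I_k}\mathbbm{1}_{3I_k}+g_k\mathbbm{1}_{\T\setminus 3I_k}$ with $h_k:=(g_k-(g_k)_{3I_k})\mathbbm{1}_{3I_k}$. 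A direct estimate ($\int_{3I_k}|g_k|\lesssim|I_k|$, using $|1-\overline{\zeta_k}\zeta|=|\zeta-\zeta_k|$ on $\T$) gives $|(g_k)_{3I_k}|\lesssim 1$, so $\sum_k(g_k)_{3I_k}\mathbbm{1}_{3I_k}\in L^\infty$ (disjoint supports) and $\sum_k g_k\mathbbm{1}_{\T\setminus 3I_k}\in L^\infty$ (each term $\lesssim 2^{-k}$), while the disjointly supported, mean-zero pieces $h_k$ obey $\|\sum_k h_k\|_{\mathrm{BMO}(\T)}\lesssim\sup_k\|h_k\|_{\mathrm{BMO}}\lesssim C_0$ by the standard bound for sums of functions with pairwise disjoint supports. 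Hence $\sum_k g_k\in\mathrm{BMO}(\T)$; since moreover a direct estimate gives $\|g_k\|_{H^1}\lesssim\beta_k\log(1/\beta_k)$, so that $\sum_k\|g_k\|_{H^1}<\infty$ for $N_k$ growing fast enough and $f\in H^1$, we conclude $f\in\mathrm{BMOA}$. (Alternatively one can verify the Carleson condition $\sup_J|J|^{-1}\iint_{Q_J}|f'|^2(1-|z|)\,dA<\infty$ directly in $\D$, splitting the indices $k$ according to how $3I_k$ sits relative to $J$; the circle argument is cleaner.) I expect the only delicate points to be the two elementary pointwise estimates $|(g_k)_{3I_k}|\lesssim1$ and $|g_k(\zeta)|\lesssim 2^{-k}$ off $3I_k$, both of which come from the identity $g_k=\log\!\big(\alpha_k+\beta_k/(1-\overline{\zeta_k}z)\big)$ and $|1-\overline{\zeta_k}\zeta|=|\zeta-\zeta_k|$ on $\T$.
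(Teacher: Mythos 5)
Your parts (i) and (ii) are essentially fine — in fact they only use the pointwise lower bound $\Re g_k > -2^{-k}$, which is valid on all of $\D$, together with the single-bump estimate at $z_k^*$, so they do not really depend on the disjointness you postulate. The problem is the geometric setup on which the whole BMOA argument rests: you cannot choose arcs $I_k$ centered at an enumeration $\{\zeta_k\}$ of the dyadic rationals (or of any dense subset of $\T$) so that the tripled arcs $3I_k$ are pairwise disjoint. Since $\{\zeta_k\}$ is dense, the arc $3I_1$ — which has positive length — contains some $\zeta_j$ with $j\neq 1$ in its interior, and then $\zeta_j\in 3I_j\cap 3I_1$ no matter how small $|I_j|$ is taken. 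So the hypothesis ``the $3I_k$ are pairwise disjoint'' is vacuous; and you cannot give up density of the centers, because the accumulation of the points $z_k^*$ at every boundary point requires it. This is fatal precisely where you locate the heart of the matter: the boundary decomposition of $\sum_k g_k$ into the three series uses the disjointness of the $3I_k$ at every step (for the $L^\infty$ bound on $\sum_k (g_k)_{3I_k}\mathbf{1}_{3I_k}$, for the bound $\sum_{k\neq\kappa(\zeta)}|g_k(\zeta)|\lesssim 1$, and above all for the disjoint-support BMO lemma applied to $\sum_k h_k$). With overlapping arcs none of these survives as written, and since BMO norms are not subadditive you cannot fall back on $\|g_k\|_{BMOA}\leq C_0$ either.

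The idea of superposing logarithmic bumps is salvageable, but it requires reorganizing the bumps into generations (finitely many pairwise disjoint arcs per scale, with distinct scales overlapping) and then controlling the cross-generation interaction, e.g.\ by estimating the Carleson measure $|f'|^2(1-|z|)\,dA$ directly and exploiting the decay $\beta_k=2^{-k}|I_k|$; that is a genuinely more delicate argument than the one you sketch. For comparison, the paper's proof sidesteps the summation problem entirely: it constructs a single positive measure $\nu$ such that every arc contains a dyadic subarc $J$ with $\nu(J)\geq |J|^{1-\delta}$, takes $b$ to be the analytic self-map of $\D$ whose Herglotz transform is $\int_{\T}\frac{\zeta+z}{\zeta-z}\,d\nu(\zeta)$, and sets $f=\log\bigl(e/(1-b)\bigr)$. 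Then $f\in BMOA$ comes for free because $\Im f$ is bounded, $1/f\in H^\infty$ because $|1-b|<2$, and the required growth at the points $(1-|J|)\xi_J$ follows from the Poisson-kernel lower bound $P(\nu)(z_J)\gtrsim\nu(J)/|J|$. You should either adopt such a route or supply the missing multi-scale Carleson-measure estimate.
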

\noindent
Observe that such an $f$ must necessarily be outer, hence it is cyclic $\B$. The set of points for which the Bloch function $f$ attains its maximal growth are located in the set $E(f)$. In the context of cyclic vectors in growth spaces, Hedenmalm and Borichev observed, roughly speaking that cyclic functions therein cannot grow "maximally" on too "massive" sets. See \cite{borichev1995cyclicity}. Our construction below is deeply inspired from this idea. Taking \thref{unbddnoncyc} for granted, we first deduce \thref{THM:Seqcyc}.

\begin{proof}[Proof of \thref{THM:Seqcyc}]
Let $f \in BMOA$ be given by Proposition \ref{unbddnoncyc}. Let $\{Q_n\}_n$ be a sequence of analytic polynomials with $\sup_n \norm{fQ_n}_{\B}< \infty$. It follows that
\[
\sup_{n} \sup_{z\in \D} \log^{-1}\left( \frac{e}{1-|z|}\right)\abs{f(z)Q_n(z)} < \infty.
\]
However, this implies
\[
\sup_n \sup_{z\in E(f)} \abs{Q_n(z)} < \infty.
\]
Now the assumption on $E(f)$ in conjunction with the maximum principle implies that \\ $\sup_{n} \norm{Q_n}_{H^\infty} < \infty.$ By Helly's selection theorem, we can extract a subsequence $\{Q_{n_k}\}_k$ which converges pointwise on $\D$ to some function $h\in H^\infty$. Hence any weak star sequential limit of $\{f Q_n\}$ is of the form $fh$ with $h \in H^\infty$. But functions of that form have finite radial limits at $m$-a.e on $\T$, hence the set $\{fh \in \B : h \in H^\infty \}$ is a proper subspace of $\B$, which completes the proof. 

\end{proof}

The proof of Proposition \ref{unbddnoncyc} is based on the following construction.

\begin{lemma} \thlabel{ACmeasure}
Let $0<\delta <1$. Then there exists a positive finite Borel measure $\nu$ on $\T$ satisfying the following property: For any arc $I\subset \T$ there exists a dyadic subarc $J\subset I$, such that $\nu(J) \geq  |J|^{1-\delta}.$
\end{lemma}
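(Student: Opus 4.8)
The plan is to build $\nu$ as a weak-$*$ limit of an increasing sequence of measures $\nu_k$, each constructed on a finer dyadic generation, in such a way that \emph{every} dyadic arc $I$ eventually contains a subarc $J$ carrying mass at least $|J|^{1-\delta}$. Fix a rapidly increasing sequence of integers $m_0 < m_1 < m_2 < \cdots$, to be specified. At stage $0$ we distribute the unit mass uniformly, so $\nu_0 = dm$. Having defined $\nu_{k-1}$, supported with uniform density on a union of dyadic arcs of generation roughly $m_{k-1}$, we go into each such arc $Q$ and choose inside it one dyadic grandchild (or a descendant of controlled generation) $J_Q$ with $|J_Q| \approx 2^{-m_k}$; we then \emph{redistribute} the mass $\nu_{k-1}(Q)$ that $Q$ was carrying, moving a fixed fraction (say one half) onto $J_Q$, spread uniformly over $J_Q$, while leaving the remaining half spread uniformly over all of $Q$ (or over its generation-$m_k$ subarcs). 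Because $|J_Q| \approx 2^{-m_k}$ is tiny while the mass deposited there is a fixed fraction of $\nu_{k-1}(Q) \approx |Q| \approx 2^{-m_{k-1}}$, by choosing $m_k$ large enough relative to $m_{k-1}$ and to $\delta$ we can guarantee $\nu_k(J_Q) \ge |J_Q|^{1-\delta}$. The key inequality to arrange is $\tfrac12 \nu_{k-1}(Q) \ge |J_Q|^{1-\delta}$, i.e.\ $\tfrac12 2^{-m_{k-1}} \ge 2^{-m_k(1-\delta)}$, which holds as soon as $m_k(1-\delta) \ge m_{k-1} + 1$; this is easy to satisfy recursively.

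Next I would check that this property, once achieved, is preserved in the limit. Since the masses are only ever redistributed (total mass stays $1$) and on any fixed arc $J_Q$ the measure $\nu_j$ for $j \ge k$ never removes mass from $J_Q$ below the level $\nu_k(J_Q)$—one must be slightly careful here and, in the redistribution step, only ever push mass \emph{down} into descendants, never back up, so that $\nu_j(J_Q) \ge \nu_k(J_Q)$ for all $j \ge k$. Then $\{\nu_k\}$ is a sequence of probability measures on the compact space $\T$, so it has a weak-$*$ convergent subsequence with limit $\nu$; and since $J_Q$ is a (closed) dyadic arc and the $\nu_k$ are eventually constant near $\partial J_Q$ in the relevant sense, we get $\nu(J_Q) \ge \liminf_k \nu_k(J_Q) \ge |J_Q|^{1-\delta}$. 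Finally, \emph{every} dyadic arc $I$ appears, from some generation on, as a union of the generation-$m_{k}$ arcs $Q$ used at stage $k+1$ for $k$ large; for such a stage, $I$ contains at least one of the arcs $Q$ entirely, hence contains the corresponding $J_Q$, which is the required dyadic subarc $J \subset I$ with $\nu(J) \ge |J|^{1-\delta}$.

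The main obstacle I anticipate is bookkeeping the redistribution so that two competing requirements hold simultaneously: (a) enough mass is concentrated on the small arcs $J_Q$ to beat $|J_Q|^{1-\delta}$, and (b) enough mass is left behind on the bulk of each $Q$ so that the construction can continue indefinitely and reach \emph{every} dyadic arc, not just a sparse subfamily. The "leave half behind, push half down" scheme resolves this, but one has to verify that the half left behind, after infinitely many further halvings at later stages, still suffices; since at stage $k$ the surviving bulk density on a generation-$m_k$ arc is $\ge 2^{-k}$ (only $k$ halvings have occurred), and we only need density $\ge 2^{-(m_{k+1}-m_k)}$-type lower bounds to run the next step, choosing $m_{k+1} - m_k \gg k$ keeps everything consistent. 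A secondary technical point is the weak-$*$ limit: one should phrase the final density bound in terms of the \emph{closed} dyadic arc $J_Q$ and note that by construction $\nu_k$ puts no mass on a neighborhood of the two endpoints of $J_Q$ beyond what lies inside, so no mass escapes across $\partial J_Q$ in the limit; alternatively, pass to the slightly smaller half of $J_Q$ to avoid boundary issues entirely, at the cost of an irrelevant constant factor absorbed by enlarging $m_k$.
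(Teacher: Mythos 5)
Your construction is correct, but it follows a genuinely different route from the paper's. The paper builds $\nu$ by a single multiplicative rule on the dyadic tree: the density of the right child of any dyadic arc $J$ is $(1+\eta)$ times that of $J$ and the left child's is $(1-\eta)$ times, with $\eta$ chosen so that $2^{\delta}<1+\eta$. Then inside any dyadic $I$ the rightmost descendant chain $J_n$ has density $(1+\eta)^n\,\nu(I)/|I|$, which outgrows $|J_n|^{-\delta}=2^{\delta n}|I|^{-\delta}$ for large $n$; the measure exists directly by consistency of the dyadic mass assignments, so no limiting argument or boundary bookkeeping is needed. Your scheme instead concentrates mass at a sparse sequence of scales $m_k$ and passes to a weak-$*$ limit; this works, and you correctly identify and handle the two delicate points, namely (a) that after $k$ halvings the bulk density is only $2^{-k}$, so the recursion must be $m_{k+1}(1-\delta)\ge m_k+k+1$ rather than the first inequality you wrote (which pretends $\nu_{k-1}(Q)=|Q|$), and (b) that mass must only move downward inside dyadic arcs so that $\nu_j(J_Q)$ is eventually constant and the portmanteau inequality on the closed arc $\overline{J_Q}$ gives the bound in the limit. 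Both arguments reduce the general arc $I$ to a dyadic one (any arc contains a dyadic subarc of comparable length), which is fine. What the paper's approach buys is brevity and the avoidance of limits altogether; what yours buys is flexibility (the scales $m_k$ can be tuned independently of $\delta$ at each step), at the cost of the bookkeeping you describe.
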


\begin{proof}
Fix $0<\eta <1$ to be specifiec later. We shall define the measure $\nu$ by declaring its mass on dyadic arcs. First we set $\nu(\T)=1$. 
% and let $I_+,I_-$ denote the right and left dyadic children of $I:= \T$, respectively, and declare that 
% \[
% \frac{\nu(I_{\pm})}{|I_{\pm}|} = (1 \pm \eta) \frac{\nu(\T)}{|\T|} = (1\pm \eta).
% \]
Inductively, assume that $\nu(J)$ has already been defined for some dyadic arc $J\subset \T$. We denote by $J_+,J_-$ the right and left dyadic children of $J$, respectively. We shall assign a larger portion of the available mass to the right child than to the left by declaring 
\[
\frac{\nu(J_\pm)}{|J_\pm|} = (1\pm \eta)\frac{\nu(J)}{|J|}.
\]
% Indeed, this is coherent with the martingale property
% \[
% \frac{\nu(J)}{|J|} = \frac{1}{2}\frac{\nu(J_+)}{|J_+|} + \frac{1}{2} \frac{\nu(J_-)}{|J_-|}.
% \]
Now it is a standard fact in measure theory that this construction gives rise to a uniquely defined positive finite Borel measure $\nu$ on $\T$. Hence it suffices to verify that $\nu$ has the required property. We may without loss of generality assume that $I$ is a dyadic arc. Now let $J_n \subset I$ be the rightmost dyadic subarc of $I$ with $|J_n|= 2^{-n}|I|$. Then by construction, we have 
\[
\frac{\nu(J_n)}{|J_n|} = (1+\eta)^n \frac{\nu(I)}{|I|} \geq \frac{1}{|J_n|^{\delta}} = \frac{2^{\delta n}}{|I|^{\delta}}
\]
whenever $(1+\eta)^n/ 2^{n\delta} \geq |I|^{1-\delta}/\nu(I)$. However, choosing $0<\eta<1$ so that $2^{\delta} < 1+\eta$, the above inequality will certainly hold for sufficiently large $n$. This finishes the proof.
\end{proof}

\begin{proof}[Proof of \thref{unbddnoncyc}] 
Let $\nu$ be the positive finite Borel measure given by \thref{ACmeasure}. Consider an analytic selfmap $b= b_\nu$ of $\D$ defined via the Herglotz transform of $\nu$:
\[
\frac{1+b (z)}{1-b (z)}  = \int_{\T} \frac{\zeta + z}{\zeta - z} d\nu(\zeta),\qquad z\in \D.
\]
In other words, $\nu$ is the so-called Aleksandrov-Clark measure for $b$. Consider the function 
\[
f(z) = \log \left( \frac{e}{1-b (z)} \right), \qquad z \in \D.
\]
Note that since $f$ has bounded imaginary part, it follows that $f$ belongs to $\text{BMOA} \subset \B$. Moreover, the trivial fact that $\abs{1-b(z)} < 2$ for $z\in \D$, implies that $1/f \in H^\infty$. It remains to show that the  closure of the corresponding set $E(f)$ contains the entire unit circle. Indeed, the reason for our choice of $f$ stems from the observation that the set $E(f)$ contains
\[
\widetilde{E}(b):= \left\{z\in \D: \Re \left(\frac{1+b (z)}{1-b (z)} \right) \geq (1-|z|)^{-\delta} \right\}, 
\]
thus it suffices to check that the closure of the smaller set $\widetilde{E}(b)$ contains $\T$. To this end, fix a small $\varepsilon >0$ and let $\zeta \in \T$ be an arbitrary point. If $0<\delta'<1$ is fixed, then an application of \thref{ACmeasure} shows that for any arc $I \subset \T$ containing $\zeta$ with length at most $\varepsilon /2$, there exists a small dyadic arc $J \subset I$ (not necessarily containing $\zeta$), such that $\nu(J) \geq |J|^{1 - \delta'}$. Let $\xi_J$ denote the center of the arc $J$ and $z_J := (1-|J|)\xi_J$, and observe that a trivial estimate of the Poisson kernel implies that 
\[
\Re \left(\frac{1+b (z_J)}{1-b (z_J)} \right) = P(\nu)(z_J) \geq c \frac{\nu(J)}{|J|} \geq \frac{c}{|J|^{\delta'}} = c(1-|z_J|)^{-\delta'} \geq (1-|z_J|)^{-\delta},
\]
where $c>0$ is an absolute constant and $0<\delta<\delta'$ sufficiently small. But this shows that $z_J \in \widetilde{E}(b)$ with $\abs{z_J - \zeta} \leq 2|I| < \varepsilon$. Hence it follows that for sufficiently small $\delta>0$, the corresponding set $E(f)$ accumulates to any point $\zeta \in \T$, thus we conclude the proof of \thref{unbddnoncyc}.
\end{proof}

%Can we use \thref{THM:SuffcycB} to provide a simpler example of a cyclic inner function in $\B$?

\bibliographystyle{siam}
\bibliography{mybib}

\begin{thebibliography}{10}

\bibitem{aleksandrovinv}
{\sc A.~B. Aleksandrov}, {\em Invariant subspaces of shift operators. {A}n
  axiomatic approach}, Zap. Nauchn. Sem. Leningrad. Otdel. Mat. Inst. Steklov.
  (LOMI), 113 (1981), pp.~7--26, 264.

\bibitem{dbrcont}
{\sc A.~Aleman and B.~Malman}, {\em Density of disk algebra functions in de
  {B}ranges--{R}ovnyak spaces}, C. R. Math. Acad. Sci. Paris, 355 (2017),
  pp.~871--875.

\bibitem{anderson1991inner}
{\sc J.~Anderson, J.~Fernandez, and A.~L. Shields}, {\em {Inner functions and
  cyclic vectors in the Bloch space}}, Transactions of the American
  Mathematical Society, 323 (1991), pp.~429--448.

\bibitem{pommerenkebloch}
{\sc {Anderson, J. M. Clunie, J. and Pommerenke, Ch.}}, {\em {On Bloch
  functions and normal functions}}, J. Reine Angew. Math., 270 (1974),
  pp.~12--37.

\bibitem{beardon1978poincare}
{\sc A.~F. Beardon and C.~Pommerenke}, {\em {The Poincar{\'e} metric of plane
  domains}}, Journal of the London Mathematical Society, 2 (1978),
  pp.~475--483.

\bibitem{bishop1990bounded}
{\sc C.~Bishop}, {\em {Bounded functions in the little Bloch space}}, Pacific
  Journal of Mathematics, 142 (1990), pp.~209--225.

\bibitem{borichev1996estimates}
{\sc A.~Borichev}, {\em {Estimates from below and cyclicity in Bergman-type
  spaces}}, International Mathematics Research Notices, 1996 (1996),
  pp.~603--611.

\bibitem{borichev1995cyclicity}
{\sc A.~Borichev and H.~Hedenmalm}, {\em {Cyclicity in Bergman-type spaces}},
  International Mathematics Research Notices, 1995 (1995), pp.~253--262.

\bibitem{borichevhedenmalmcyclicity}
{\sc A.~Borichev and H.~Hedenmalm}, {\em Harmonic functions of maximal growth:
  invertibility and cyclicity in {B}ergman spaces}, J. Amer. Math. Soc., 10
  (1997), pp.~761--796.

\bibitem{borichev2017weak}
{\sc A.~Borichev, A.~Nicolau, and P.~J. Thomas}, {\em {Weak embedding property,
  inner functions and entropy}}, Mathematische Annalen, 368 (2017),
  pp.~987--1015.

\bibitem{brown1990invertible}
{\sc L.~Brown}, {\em {Invertible elements in the Dirichlet space}}, Canadian
  Mathematical Bulletin, 33 (1990), pp.~419--422.

\bibitem{brown1991multipliers}
{\sc L.~Brown and A.~L. Shields}, {\em {Multipliers and cyclic vectors in the
  Bloch space}}, The Michigan Mathematical Journal, 38 (1991), pp.~141--146.

\bibitem{carlesonuniqueness}
{\sc L.~Carleson}, {\em Sets of uniqueness for functions regular in the unit
  circle}, Acta mathematica, 87 (1952), pp.~325--345.

\bibitem{cauchytransform}
{\sc J.~Cima, A.~Matheson, and W.~Ross}, {\em The {C}auchy transform}, vol.~125
  of Mathematical Surveys and Monographs, American Mathematical Society,
  Providence, RI, 2006.

\bibitem{doubtsov2002symmetric}
{\sc E.~Doubtsov and A.~Nicolau}, {\em {Symmetric and Zygmund measures in
  several variables}}, in Annales de l'institut Fourier, vol.~52, 2002,
  pp.~153--177.

\bibitem{duren1966singular}
{\sc P.~L. Duren, H.~S. Shapiro, and A.~L. Shields}, {\em {Singular measures
  and domains not of Smirnov type}}, Duke Mathematical Journal, 33 (1966),
  pp.~247--254.

\bibitem{starinvsmooth}
{\sc K.~Dyakonov and D.~Khavinson}, {\em {Smooth functions in star-invariant
  subspaces}}, Contemporary Mathematics, 393 (2006), pp.~59--66.

\bibitem{dyakonov1998multiplicative}
{\sc K.~M. Dyakonov}, {\em {Multiplicative structure in weighted BMOA spaces}},
  Journal d'Analyse Math{\'e}matique, 75 (1998), pp.~85--104.

\bibitem{dyakonov2022interpolation}
\leavevmode\vrule height 2pt depth -1.6pt width 23pt, {\em Interpolation and
  duality in spaces of pseudocontinuable functions}, Mathematische Zeitschrift,
  302 (2022), pp.~1477--1488.

\bibitem{garnett}
{\sc J.~Garnett}, {\em Bounded analytic functions}, vol.~236, Springer Science
  \& Business Media, 2007.

\bibitem{garnett2005harmonic}
{\sc J.~B. Garnett and D.~E. Marshall}, {\em {Harmonic measure}}, vol.~2,
  Cambridge University Press, 2005.

\bibitem{hedenmalmbergmanspaces}
{\sc H.~Hedenmalm, B.~Korenblum, and K.~Zhu}, {\em Theory of {B}ergman spaces},
  vol.~199 of Graduate Texts in Mathematics, Springer-Verlag, New York, 2000.

\bibitem{ivrii2022beurling}
{\sc O.~Ivrii and A.~Nicolau}, {\em Beurling--carleson sets, inner functions
  and a semilinear equation}, Analysis \& PDE, 17 (2024), pp.~2585--2618.

\bibitem{korenblum1981cyclic}
{\sc B.~Korenblum}, {\em {Cyclic elements in some spaces of analytic
  functions}}, Bulletin of the American Mathematical Society, 5 (1981),
  pp.~317--318.

\bibitem{limani2023mzinvariant}
{\sc A.~Limani}, {\em Shift invariant subspaces in growth spaces and sets of
  finite entropy}, Journal of the London Mathematical Society, 109 (2024),
  p.~e12915.

\bibitem{limani2022abstract}
{\sc A.~Limani and B.~Malman}, {\em An abstract approach to approximation in
  spaces of pseudocontinuable functions}, Proceedings of the American
  Mathematical Society, 150 (2022), pp.~2509--2519.

\bibitem{limani2023model}
\leavevmode\vrule height 2pt depth -1.6pt width 23pt, {\em {On model spaces and
  density of functions smooth on the boundary}}, Revista matem{\'a}tica
  iberoamericana, 39 (2023), pp.~1059--1071.

\bibitem{limani2023problem}
{\sc A.~Limani and B.~Malman}, {\em On the problem of smooth approximations in
  h(b) and connections to subnormal operators}, Journal of Functional Analysis,
  284 (2023), p.~109803.

\bibitem{roberts1985cyclic}
{\sc J.~Roberts}, {\em {Cyclic inner functions in the Bergman spaces and weak
  outer functions in $H^p$, $0< p< 1$}}, Illinois Journal of Mathematics, 29
  (1985), pp.~25--38.

\bibitem{shapiro1964weakly}
{\sc H.~S. Shapiro}, {\em {Weakly invertible elements in certain function
  spaces, and generators in $\ell_1$}}, Michigan Math. J., 11 (1964),
  pp.~161--165.

\bibitem{shirokov1982zero}
{\sc N.~A. Shirokov}, {\em {Zero sets for functions from $\Lambda_\omega$}},
  Zapiski Nauchnykh Seminarov POMI, 107 (1982), pp.~178--188.

\bibitem{spanne1965some}
{\sc S.~Spanne}, {\em {Some function spaces defined using the mean oscillation
  over cubes}}, Annali della Scuola Normale Superiore di Pisa-Classe di
  Scienze, 19 (1965), pp.~593--608.

\end{thebibliography}

\Addresses

\end{document}